\title{Extremal functions for an embedding from  some anisotropic  space, and partial differential equation involving  the "one  Laplacian" }
\author{ Fran\c coise Demengel, Thomas Dumas 
\footnote{Laboratoire AGM, UMR 80-88, Universi\'e de Cergy Pontoise, 95302 Cergy Pontoise. }}
\date{}
\newtheorem{theo}{Theorem}[section]
\newtheorem{prop}[theo]{Proposition}
\newtheorem{rema}[theo]{Remark}
\newtheorem{defi}[theo]{Definition}
\newtheorem{cor}[theo]{Corollary}
\newtheorem{lemme}[theo]{Lemma}
\def\R{\mathbb  R}
\newcommand{\N}{{\bf N}}
\begin{document}
\maketitle

\begin{abstract}

 In this paper, we prove the existence of extremal functions for the best constant of  embedding from anisotropic  space,  allowing  some of the Sobolev exponents to be equal to $1$. 
  We prove also that the extremal functions satisfy a partial differential equation  involving the $1$ Laplacian. 
   \end{abstract}
    \section{Introduction}
    Anisotropic Sobolev spaces have been studied for a long time, with different purposes. Let us recall that  for $\vec p = ( p_1, \cdots ,p_N)$, and the $p_i \geq 1$ the space 
    ${\cal D}^{1, \vec p}(\R^N)$, denotes  the closure of ${ \cal D} ( \R^N)$ for the norm 
    $ \sum_i |\partial_i u |_{p_i}$.     The existence of a critical  embedding from ${\cal D}^{1, \vec p}( \R^N)$ into $L^{ p^\star}$, with $p^\star = { N \over \sum_i { 1 \over p_i}-1}$ when $  \sum_i { 1 \over p_i}>1$  is due to Troisi, \cite{T}.

        There is by now a large number of papers and an increasing interest about anisotropic problems. With no hope of being complete, let us mention some pioneering works on anisotropic Sobolev spaces \cite{KruKoro}, \cite{Ni} and some more recent regularity results for minimizers of anisotropic functionals, that we will cite below.

      Let us note that anisotropic operators  bring new problems, essentially when one  wants to prove regularity properties.        As an example the property that $\Omega$ be Lipschitz does not ensure the embedding    $W_o^{1, \vec p}( \Omega )\hookrightarrow L^{ p^\star}( \Omega)$. This is linked to the fact that in  the absence of further geometric properties of $\Omega$,   one cannot provide a continuous extension operator from $W^{1 , \vec p} (  \Omega)$ in ${ \cal D} ^{1, \vec p} ( \R^N)$.  To illustrate this,  see the counterexample in \cite{KrKo}, see also 
       \cite{ThD}  for  one example when some of the  $p_i$ are equal to $1$, in the context of the present article.

Let us say a few words about the existence and regularity results of solutions to  $-\sum_i \partial_i ( | \partial_i u |^{ p_i-2} \partial_i u ) = f$,     $u=0$ on $\partial \Omega$  when $\Omega$ is  a bounded domain in $\R^N$.     Assuming  a convenient assumption on $f$,  the existence of solutions can generally  easily be obtained  by the use of classical methods in the calculus of variations.    But,   as a first step in the  regularity  of such solutions, the local boundedness of the solutions,  can fail   if the supremum of the $p_i$ is too large,
   let us cite to that purpose  \cite{G} and \cite{M1} where the author  exhibits a counterexample to the local boundedness when  $p_i=2$ for $i\leq N-1$ and 
     $p_N  > 2{N-1\over N-3}$. 
      This   restriction on $\vec p$, to ensure the local boundedness  is confirmed by the results obtained later  : let us cite in a non exhaustive way      \cite{CMM}, \cite{M1}, \cite{BMS}.   
       From all these papers it emanates  in a first time that a  sufficient condition for a local minimizer  to be locally bounded is that 
   the supremum of the $p_i$ be  strictly   less than  the critical exponent  $ p^\star$. 
       This  local  boundedness   is extended  by Fusco Sbordone  in  \cite{FS}  to the case where  $ \sup p_i =  p^\star$. 
   For further regularity properties of the solutions, as  the local higher  integrability of   the local minimizers for  some  genarized functionals, 
       see   Marcellini in  \cite{M2},   and  Esposito Leonetti Mingione    \cite{ELM1,  ELM2} . 
       
        \bigskip

         Coming back to ${ \cal D}^{1 , \vec p} ( \R^N)$,     and        concerning extremal functions, let us recall that in the isotropic case, the first results concerned  the case where $p_i  = 2$ for all $i$, in which  case the extremal functions  are   solutions  of 
            $-\Delta u = u^{2^\star-1}$.
             The existence and the explicit form  of them  is   completely solved by Aubin \cite{A}, and Talenti, \cite{Ta}. 
             For $W^{1,p}$ and the isotropic $p$ Laplacian, say 
              $ -\Delta_p u = -{ \rm div} ( | \nabla u |^{ p-2} \nabla u)$ the explicit form is also known as  the family of radial  functions 
              $u_{a, b} ( r) = ( a+ b r^{ p\over p-1})^{p-N \over p}, $
                           while  for the $p$-Laplacian non isotropic,  say  for the equation 
                $-\sum_{i=1}^N \partial_i ( | \partial_iu|^{ p-2} \partial_i u  ) = u^{ p^\star-1}$,  the explicit solutions are obtained by Alvino Ferrone Trombetti  Lions \cite{AFTL} and are given by 
                $u_{a, b} ( r) = ( a+ b \sum_{i=1}^N |x_i|^{ p\over p-1})^{p-N \over p}.$
                 For  further results about sharp  embedding constant, and a new, elegant approach by using  mass transportation the author can see \cite{CNV}.  
                
                \bigskip
                Let us now consider the case where the  $p_i$  can be different from each others,  and  let us  first cite  the paper of Fragala Gazzola and Kawohl \cite{FGK}, where the authors prove the existence of extremal functions for some  subcritical embeddings in the case of bounded domains. 
        
      For the case of $\R^N$ and the critical case, 
             the existence of extremal functions  is proved in \cite{HR2},    when all the $p_i >1$, and $p^+:= \sup p_i < p^\star$.  The authors  provide also some properties of the extremal functions, as the $L^\infty$ behaviour, extending in that way the regularity  results already obtained for solutions of  anisotropic partial differential equation  in  a bounded domain, with a right hand side sub-critical as in \cite{FGK}, to the critical one.  The method uses essentially the concentration compactness theory of P. L. Lions \cite{PL1, PL2} adapted to this context, and some other tools developed also in  a more general context in \cite{HR1} .
             
              In the case where $p^+ = p^\star$ and for more general domains than $\R^N$ the reader can see Vetois, \cite{V}. In this article  this author provides also some  vanishing properties of the solutions, as well as some further regularity properties of the solutions. 
              
               When some of the $p_i$ are equal to $1$, let us cite the paper of Mercaldo, Rossi,  Segura de leon, Trombetti, \cite{MRST}, which  proved the existence of solutions in some anisotropic  space, with some derivative in the space of bounded measures,  for the $\vec p$-Laplace equation in  bounded domains,  using the definition of the one Laplacian with respect to the coordinates for which $p_i=1$. For the existence of extremal functions in the case of $\R^N$, and in the best of our knowledge, nothing has been done in the case where some of the $p_i$ are equal to $1$. Of course  in that case these extremal functions have their corresponding derivative in the space $M^1 ( \R^N)$ of  bounded measures on $\R^N$. 
 Even if the existence of such extremal can be obtained following the lines in the proof of \cite{HR2}, the partial differential equation  satisfied by the extremal cannot be obtained   by this existence's result. 
               In order to get it,         we are led to consider  a  sequence of  extremal functions for the embedding of ${ \cal D}^{1, \vec p_\epsilon} (\R^N)$ in $L^{p_\epsilon^\star}(\R^N)$ where in $\vec p_\epsilon$, all the $p_i^\epsilon > p_i$ and tend to them as  $\epsilon$ goes to zero.   Note that one of the difficulties  raised by this approximation is that, due to the unboundedness of $\R^N$,  ${ \cal D}^{1, \vec p_\epsilon} ( \R^N)$ is not a  subspace of ${ \cal D}^{1, \vec p} ( \R^N)$, a problem which does not appear when one  works with bounded domains, see \cite{ThD}.  In particular this does not allow to use directly the  concentration compactness theory of P.L.  Lions, \cite{PL1}.  
                 We will prove both that the best constant for the  embedding  from ${ \cal D}^{1, \vec p_\epsilon} ( \R^N)$ in $L^{p_\epsilon^\star}( \R^N)$ converges to the best constant   for the embedding of ${ \cal D}^{1 , \vec p} ( \R^N)$ into $L^{p^\star}(\R^N)$,  and that  some extremal  $u_\epsilon$ converge sufficiently tightly to some $u$. Passing to the limit in the partial differential equation satisfied by $u_\epsilon$ one obtains  that $u$ is  extremal and satisfies the required partial differential equation.

 \section{Notations, and previous results}
 
\subsection{Some measure Theory,  definition and properties of the space $BV^{ \vec p}$}

     \begin{defi}
             Let $\Omega$ be an open set in $\R^N$, 
             and  $M (\Omega)$,  the space of  scalar Radon measures, i.e.  the  dual of ${ \cal C}_c ( \Omega)$. 
             Let 
              $M^1( \R^N)$ be  the space of scalar  bounded Radon measures  or equivalently the subspace of $\mu \in M( \Omega)$ which satisfy 
               $\int_\Omega | \mu| = \sup _{ \varphi \in { \cal C}_c ( \Omega)}  \langle \mu, \varphi\rangle< \infty$.
               
                 $M^+ (\Omega)$ is the space of non negative bounded measures on $\R^N$. 
                 \end{defi}
                 
                 \begin{defi}
                  When $\mu = (\mu_1, \cdots ,\mu_n)$ we define 
                  $ | \mu | =( \sum \mu_i^2)^{1 \over 2}$ as the measure  :
                  For  $\varphi \geq 0$ in ${ \cal C}_c( \Omega)$,
                  $\langle | \mu|, \varphi\rangle = \sup_{ \psi \in { \cal C}_c( \Omega, \R^N), \sum_1^N \psi_i^2 \leq \varphi^2} \sum \langle \mu_i, \psi_i\rangle $.
                  \end{defi}

               Let us recall that 
               \begin{defi}
               
                $\mu_n\rightharpoonup \mu$ vaguely or weakly in $M( \Omega )$ if 
                for any $\varphi \in { \cal C}_c ( \Omega)$, 
                 $\langle \mu_n, \varphi\rangle \rightarrow \langle \mu, \varphi \rangle$.

                 When  $\mu_n$ and $\mu$ are  in $M^1 ( \Omega)$ we will say that  $\mu_n$ converges tightly  to $\mu$ if    for any $\varphi \in { \cal C} ( \R^N)$ and bounded, 
                     $\langle \mu_n, \varphi\rangle \rightarrow \langle \mu, \varphi \rangle$. 
                     \end{defi}

                    \begin{rema}\label{remvague}
                    When $\mu_n \geq 0$, the tight convergence of $\mu_n$ to $\mu$ is equivalent to  both the two conditions
                        1)  $\mu_n\rightharpoonup \mu$ vaguely  and 2) $\int_{ \Omega} \mu_n \rightarrow \int_{ \Omega} \mu$. 
                         \end{rema}
                        
                         We will frequently use the following density result: 
                         
                          \begin{prop}
                           
                            If $\vec \mu\in M^1 ( \Omega, \R^N)$  there exists $u_n \in { \cal D} ( \Omega,\R^N)$  such that   $(u_n)_i^\pm $, respectively $| u_n |$  converges tightly to $\mu_i^\pm$ (respect. $| \mu |$).                               \end{prop}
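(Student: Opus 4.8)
The plan is to build $u_n$ by first truncating $\vec\mu$ to a compact subset of $\Omega$ and then mollifying; the whole difficulty lies in checking that these two operations are compatible with the passage to positive and negative parts and with $\vec\mu\mapsto|\vec\mu|$.

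First I would fix an exhaustion of $\Omega$ by compact sets $K_1\subset K_2\subset\cdots$ with $\bigcup_j K_j=\Omega$, pick cut-offs $\chi_j\in{\cal D}(\Omega)$ with $0\le\chi_j\le1$ and $\chi_j\equiv1$ on $K_j$, and a standard even mollifier $(\rho_\ep)$. Since $\chi_j\ge0$, uniqueness of the Jordan decomposition gives $(\chi_j\mu_i)^\pm=\chi_j\mu_i^\pm$ and $|\chi_j\vec\mu|=\chi_j|\vec\mu|$, and because $|\vec\mu|$ is finite with $\chi_j\uparrow1$ pointwise, dominated convergence yields $\chi_j\mu_i^\pm\to\mu_i^\pm$ and $\chi_j|\vec\mu|\to|\vec\mu|$ tightly. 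Then I choose $\ep_j\to0$ with $\ep_j<\mathrm{dist}(\supp\chi_j,\partial\Omega)$ and set $u_j:=\rho_{\ep_j}\ast(\chi_j\vec\mu)\in{\cal D}(\Omega,\R^N)$. Moving the mollifier onto the test function, for any bounded $\varphi\in{\cal C}(\R^N)$ one gets $\langle u_{j,i},\varphi\rangle=\int(\rho_{\ep_j}\ast\varphi)\chi_j\,d\mu_i$, and since $\rho_{\ep_j}\ast\varphi\to\varphi$ pointwise and is bounded by $\|\varphi\|_\infty$, dominated convergence (with $|\mu_i|$ finite) gives $u_{j,i}\to\mu_i$ tightly; the identical argument gives $\rho_{\ep_j}\ast(\chi_j\mu_i^\pm)\to\mu_i^\pm$ and $\rho_{\ep_j}\ast(\chi_j|\vec\mu|)\to|\vec\mu|$ tightly.

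For the positive and negative parts I would write $u_{j,i}=\rho_{\ep_j}\ast(\chi_j\mu_i^+)-\rho_{\ep_j}\ast(\chi_j\mu_i^-)$ with both terms nonnegative, so $0\le u_{j,i}^\pm\le\rho_{\ep_j}\ast(\chi_j\mu_i^\pm)$. The family $\{u_{j,i}^\pm\,dx\}$ is bounded in $M^1$, hence has vague limit points; if $\sigma^\pm$ is such a limit point (along a subsequence), passing to the limit gives $\sigma^+\le\mu_i^+$, $\sigma^-\le\mu_i^-$ and $\sigma^+-\sigma^-=\mu_i$, so that $\tau:=\mu_i^+-\sigma^+=\mu_i^--\sigma^-$ is a nonnegative measure dominated by both $\mu_i^+$ and $\mu_i^-$; as $\mu_i^+\perp\mu_i^-$ this forces $\tau=0$, i.e. $\sigma^\pm=\mu_i^\pm$. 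The limit point being unique, $u_{j,i}^\pm\rightharpoonup\mu_i^\pm$ vaguely, while $\int u_{j,i}^\pm\le\int\chi_j\,d\mu_i^\pm\le\mu_i^\pm(\Omega)$ together with lower semicontinuity of the mass gives $\int u_{j,i}^\pm\to\mu_i^\pm(\Omega)$; Remark~\ref{remvague} then upgrades this to tight convergence. For $|u_j|$, the triangle inequality for vector-valued convolutions gives $|u_j|\le\rho_{\ep_j}\ast(\chi_j|\vec\mu|)$ pointwise, hence $\int|u_j|\le|\vec\mu|(\Omega)$ and any vague limit point $\sigma$ of $|u_j|\,dx$ satisfies $\sigma\le|\vec\mu|$; for the reverse inequality I test against $\psi\in{\cal C}_c(\Omega)$, $\psi\ge0$, and any $\vec\varphi\in{\cal C}_c(\Omega,\R^N)$ with $\sum_i\varphi_i^2\le\psi^2$, using $\int|u_j|\psi\ge\int\sum_i u_{j,i}\varphi_i\to\sum_i\int\varphi_i\,d\mu_i$ and the very definition of $|\vec\mu|$ to deduce $\langle\sigma,\psi\rangle\ge\langle|\vec\mu|,\psi\rangle$, so $\sigma=|\vec\mu|$; with $\int|u_j|\to|\vec\mu|(\Omega)$ (the bound above plus lower semicontinuity), Remark~\ref{remvague} gives $|u_j|\to|\vec\mu|$ tightly.

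The mollifier estimates and the appeals to Remark~\ref{remvague} are routine; the one genuinely delicate point is the treatment of the positive and negative parts, because mollification necessarily smears $\mu_i^+$ and $\mu_i^-$ into one another across the ($|\mu_i|$-negligible) set that separates them, so $u_{j,i}^+$ is strictly smaller than $\rho_{\ep_j}\ast(\chi_j\mu_i^+)$ there and one must show the defect disappears in the limit. The mechanism that closes the gap is that the defect $\tau$ is squeezed between the two mutually singular measures $\mu_i^+$ and $\mu_i^-$ and therefore vanishes, after which convergence of the masses (from the upper bound and lower semicontinuity) plus Remark~\ref{remvague} finish the job; the analogous subtlety for $|u_j|$ is handled by reading the lower bound $\sigma\ge|\vec\mu|$ directly off the variational definition of $|\vec\mu|$ recalled above.
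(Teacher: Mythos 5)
Your construction is correct, and it is worth noting that the paper itself does not prove this proposition: it simply refers the reader to Dieudonn\'e and Demengel--Temam, and later (inside the proof of Proposition \ref{propBV}) merely \emph{recalls} that for compactly supported measures $\rho_\ep\star\mu_i^{\pm}$ and $\rho_\ep\star|\vec\mu|$ converge tightly to $\mu_i^{\pm}$ and $|\vec\mu|$. Your truncation-plus-mollification scheme is exactly the route the paper has in mind, but you supply the two points that are genuinely not automatic and that the cited statement glosses over: first, that the positive and negative parts of $u_{j,i}=\rho_{\ep_j}\star(\chi_j\mu_i^+)-\rho_{\ep_j}\star(\chi_j\mu_i^-)$ (and not just the mollifications of $\mu_i^{\pm}$ themselves) converge tightly to $\mu_i^{\pm}$ --- your squeeze $0\le u_{j,i}^{\pm}\le\rho_{\ep_j}\star(\chi_j\mu_i^{\pm})$, the identification $\sigma^+-\sigma^-=\mu_i$ of simultaneous vague limit points, and the vanishing of the defect $\tau\le\mu_i^+,\ \tau\le\mu_i^-$ via mutual singularity of the Jordan parts are exactly what is needed, and the upgrade to tight convergence via mass bounds, lower semicontinuity and Remark \ref{remvague} is sound; second, that $|u_j|$ (not merely $\rho_{\ep_j}\star(\chi_j|\vec\mu|)$) converges tightly to $|\vec\mu|$, which you obtain correctly by combining the pointwise bound $|u_j|\le\rho_{\ep_j}\star(\chi_j|\vec\mu|)$ with the lower bound read off the paper's variational definition of $|\vec\mu|$. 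The only hypotheses you should keep explicit are the ones you already flagged: the mollifier is even (so the duality $\langle u_{j,i},\varphi\rangle=\int(\rho_{\ep_j}\star\varphi)\chi_j\,d\mu_i$ is legitimate) and $\ep_j<\mathrm{dist}(\supp\chi_j,\partial\Omega)$ so that $u_j\in{\cal D}(\Omega,\R^N)$. In short: your argument is complete and self-contained where the paper delegates to references, with no gap.
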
 
                         
                                            The reader is referred to \cite{Di}, \cite{DT},  for further properties  on convergence of measures and  density of  regular functions for the vague and  tight topology. 
                        \bigskip

Let $N_1\leq N \in \N$, and $\vec{p}:= (p_1,\cdots, p_N) \in \R^N$ such that $p_i=1$ for all $1\leq i \leq N_1$, and $p_i >1$ for all $N_1 +1 \leq i \leq N$.

Let  $p^+ = \sup p_i$,  and
   $$p^* := {N\over N_1 + \sum_{i=N_1 + 1}^N {1\over p_i} - 1}.$$ In all the paper we will suppose that 
   $p^+ < p^\star$. 
 Let   $\mathcal{D}^{1,\vec{p}}(\R^N)$ be  the completion of $\mathcal{D}(\R^N)$ with respect to the norm   
  \begin{equation} \label{normp} |u|_{ \vec p} =|  (\sum_{i=1}^{N_1} (  \partial_i u)^2)^{1\over 2} |_1 + \sum_{i=N_1+1} ^N | \partial_i u |_{p_i}:= | \nabla_1 u|_1 +  \sum_{i=N_1+1} ^N | \partial_i u |_{p_i}
  \end{equation} 
  where  $\nabla_1 u$  is the $N_1$ vector $( \partial_1 u, \cdots, \partial_{N_1} u)$, and   $|u|_{p_i}$ denotes  for $i\geq N_1+1$ the usual $L^{p_i}( \R^N)$ norm.
     \begin{rema}
      Of course by the equivalence of norms in $\R^{N_1}$ this completion coincides with the completion for the norm 
      $\sum_{i=1}^N | \partial_i u |_{p_i}.$
      \end{rema}

  We now recall the existence of the  embedding from ${ \cal D}^{1, \vec p}( \R^N)$ in $L^{ p^\star}( \R^N)$,  
 a particular case of the  result of Troisi , \cite{T}. 
 \begin{theo}\label{troisi} 
 $$\mathcal{D}^{1,\vec{p}}(\R^N) \hookrightarrow L^{p^*}(\R^N),$$

and there exists some  constant $T_0$  depending only on $\vec{p}$, and $N$ such that

\begin{equation}\label{eqemb}T_0 |u|_{p^*} \leq \prod_{i=1}^N |\partial_i u|_{p_{i}}^{1\over N}, \text{ and } |u|_{p^*} \leq {1\over T_o N} \left( \sqrt{N_1} |\nabla_1 u|_1 + \sum_{i=N_1+1}^N |\partial_i u|_{p_i}\right), \end{equation} 

for all $u\in \mathcal{D}^{1,\vec{p}}(\R^N)$.

\end{theo}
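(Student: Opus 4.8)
The plan is to reprove Troisi's inequality in three moves: (i) establish the multiplicative form $T_0\,|u|_{p^\star}\le\prod_{i=1}^N|\partial_i u|_{p_i}^{1/N}$ for $u\in\mathcal{D}(\R^N)$; (ii) deduce the additive form; (iii) pass to the completion $\mathcal{D}^{1,\vec p}(\R^N)$ by density. The heart of the matter is (i), which is an anisotropic Gagliardo-type iteration; the one point needing care is that the power to which $|u|$ must be raised before integrating \emph{depends on the coordinate direction}, the powers being chosen precisely so that the exponents collapse onto $p^\star$.

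For (i), fix $u\in\mathcal{D}(\R^N)$ with $u\not\equiv0$, and for $i=1,\dots,N$ put $r_i=1+p^\star(1-1/p_i)$; note $r_i\ge1$, with $r_i=1$ exactly for $i\le N_1$. Writing $\hat x_i$ for the variables other than $x_i$, smoothness and compact support give, for each $i$,
\[
|u(x)|^{r_i}=\int_{-\infty}^{x_i}\partial_i\!\big(|u|^{r_i}\big)\,dt\ \le\ r_i\int_{\R}|u|^{r_i-1}|\partial_i u|\,dx_i=:g_i(\hat x_i),
\]
which is independent of $x_i$. Raising the $i$-th inequality to the power $1/(N-1)$ and multiplying over $i$ yields $|u(x)|^{\frac1{N-1}\sum_i r_i}\le\prod_i g_i(\hat x_i)^{1/(N-1)}$; integrating over $\R^N$ and applying the Gagliardo lemma (generalized Hölder on the $(N-1)$-dimensional slices) gives
\[
\int_{\R^N}|u|^{\frac1{N-1}\sum_i r_i}\,dx\ \le\ \Big(\prod_{i=1}^N r_i\Big)^{\frac1{N-1}}\prod_{i=1}^N\Big(\int_{\R^N}|u|^{r_i-1}|\partial_i u|\,dx\Big)^{\frac1{N-1}}.
\]
Hölder's inequality in $x$ with exponents $p_i,p_i'$ bounds each factor by $|u|_{(r_i-1)p_i'}^{\,r_i-1}\,|\partial_i u|_{p_i}$ (and simply by $|\partial_i u|_1$ when $p_i=1$, since then $r_i=1$). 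The choice of $r_i$ makes $(r_i-1)p_i'=p^\star$ for every $i$ with $p_i>1$, and the definition of $p^\star$ makes $\frac1{N-1}\sum_i r_i=p^\star$; thus every $L^q$-norm of $u$ occurring above equals $|u|_{p^\star}$, and collecting its powers ($\frac1{N-1}\sum_i(r_i-1)=p^\star-\frac{N}{N-1}$ of them on the right) we reach
\[
|u|_{p^\star}^{\,p^\star}\ \le\ \Big(\prod_{i=1}^N r_i\Big)^{\frac1{N-1}}\,|u|_{p^\star}^{\,p^\star-\frac{N}{N-1}}\prod_{i=1}^N|\partial_i u|_{p_i}^{\frac1{N-1}}.
\]
Since $0<|u|_{p^\star}<\infty$ we may divide, then raise to the power $(N-1)/N$, obtaining the multiplicative inequality with $T_0=\big(\prod_{i=1}^N r_i\big)^{-1/N}$, which depends only on $N$ and $\vec p$.

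For (ii), the arithmetic--geometric mean inequality gives $\prod_i|\partial_i u|_{p_i}^{1/N}\le\frac1N\sum_i|\partial_i u|_{p_i}$, while Cauchy--Schwarz in $\R^{N_1}$ gives $\sum_{i=1}^{N_1}|\partial_i u(x)|\le\sqrt{N_1}\,|\nabla_1 u(x)|$ pointwise, hence $\sum_{i=1}^{N_1}|\partial_i u|_1\le\sqrt{N_1}\,|\nabla_1 u|_1$; combining these yields the second inequality in \eqref{eqemb}. For (iii), given $u\in\mathcal{D}^{1,\vec p}(\R^N)$ choose $u_n\in\mathcal{D}(\R^N)$ with $|u_n-u|_{\vec p}\to0$; applying the additive inequality to $u_n-u_m$ shows $(u_n)$ is Cauchy in $L^{p^\star}(\R^N)$, so it converges there to the canonical image of $u$, and passing to the limit in the inequalities for $u_n$ completes the proof. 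The only genuine obstacle is the exponent bookkeeping in step (i): choosing the direction-dependent powers $r_i$ and checking, through the definition of $p^\star$, that both sides collapse to powers of $|u|_{p^\star}$, including the degenerate case $r_i=1$ (no $u$-factor at all) in the directions $i\le N_1$ where $p_i=1$.
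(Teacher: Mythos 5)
Your proof is correct, but note that the paper itself gives no argument for Theorem \ref{troisi}: it is quoted as a particular case of Troisi's theorem \cite{T}, so there is no internal proof to compare with, and what you have supplied is essentially the classical proof of the cited result. Your exponent bookkeeping checks out: with $r_i=1+p^\star/p_i'$ one has $(r_i-1)p_i'=p^\star$ for $p_i>1$ and $r_i=1$ when $p_i=1$, and since $\sum_i 1/p_i = N/p^\star+1$ one gets $\sum_i r_i=p^\star(N-1)$, so after the Gagliardo slicing lemma and H\"older in each variable both sides collapse onto powers of $|u|_{p^\star}$; dividing by $|u|_{p^\star}^{\,p^\star-N/(N-1)}$ is legitimate because $0<|u|_{p^\star}<\infty$ for $u\in\mathcal D(\R^N)$, $u\not\equiv0$ (and in fact $p^\star\ge N/(N-1)$, so the exponent is nonnegative), yielding the multiplicative inequality with the explicit constant $T_0=\bigl(\prod_i r_i\bigr)^{-1/N}$ depending only on $N$ and $\vec p$. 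The deduction of the additive form via the arithmetic--geometric mean inequality and the pointwise Cauchy--Schwarz bound $\sum_{i\le N_1}|\partial_i u|\le\sqrt{N_1}\,|\nabla_1 u|$, and the passage to the completion by applying the additive inequality to differences of an approximating sequence, are exactly as they should be. A small but worthwhile observation implicit in your argument: it never uses the standing assumption $p^+<p^\star$, consistent with the fact that the multiplicative Troisi inequality holds for smooth compactly supported functions whenever $\sum_i 1/p_i>1$.
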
      

We now introduce a weak closure of ${ \cal D} ( \R^N)$ for the norm (\ref{normp}).
            Set
                   
              \begin{eqnarray*}\label{defBV}
              BV^{\vec{p}}(\R^N):&=& \{ u\in L^{p^*}(\R^N), \partial_i u \in M^1(\R^N) \text{ for }1\leq i \leq N_1, \text{ and } \partial_i u \in L^{p_i}(\R^N)\\
              && \text{ for } N_1 +1\leq i \leq N \}. 
              \end{eqnarray*}   
               We also define 
               $$BV^{ \vec p}_{loc} (\R^N)= \{ u\in { \cal D}^\prime ( \R^N),\  \varphi u \in BV^{ \vec p} ( \R^N), \ {\rm for\ any\ } \varphi \in { \cal D} ( \R^N)\}$$
            
               \begin{defi}
                We will say that $u_n \in BV^{ \vec p} (  \R^N)$ converges weakly to $u$ if 
                $ u_n \rightharpoonup  u $ (weakly)  in $L^{ p^\star}$, 
                                $\partial_i u_n $ converges vaguely to $\partial_i u $ in $M^1 ( \R^N)$ when $i \leq N_1$, and 
                 $\partial_i u_n \rightharpoonup \partial_i u $  (weakly)  in $L^{ p_i}$, when $i > N_1$. 
                 . 
                  
                  The convergence is said to be tight if furthermore 
                  $ \int_{ \R^N} | \partial_i u_n |^{p_i} \rightarrow \int_{ \R^N}  | \partial_i u | ^{p_i}$ for any $i  \geq N_1$ , 
               and 
                   $\int_{ \R^N} |  \nabla_1 u_n | \rightarrow \int _{ \R^N} | \nabla_1 u |$. 
                  \end{defi}
                  \begin{rema}
                  If $u_n$ converges weakly to $u$,  since $(u_n)$ is bounded in $L^{ p^\star}$,  it converges strongly in $L^q_{loc} $ for a subsequence, when  $q < p^\star$ and then for a subsequence it converges almost everywhere. 
                   \end{rema}

              \begin{prop}\label{propBV}
       It is equivalent to say that 
       \begin{enumerate}
       \item $u \in BV^{ \vec p} ( \R^N)$ 
      \item There exists $u_n \in { \cal D} ( \R^N)$ which converges tightly to $u$. 
                     \item 
               There exists $u_n \in { \cal D} ( \R^N)$  which converges weakly to $u$.
                            \end{enumerate}
                             \end{prop}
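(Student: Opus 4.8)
The plan is to prove the chain $(2)\Rightarrow(3)\Rightarrow(1)\Rightarrow(2)$. The implication $(2)\Rightarrow(3)$ is immediate, since ${\cal D}(\R^N)\subset BV^{\vec p}(\R^N)$ and, by the definitions above, tight convergence is stronger than weak convergence. For $(3)\Rightarrow(1)$ I would use weak lower semicontinuity and uniqueness of distributional limits: if $u_n\in{\cal D}(\R^N)$ converges weakly to $u$, then $u\in L^{p^\star}(\R^N)$ because $u_n\rightharpoonup u$ in $L^{p^\star}$; for $i>N_1$ the weak $L^{p_i}$ limit of $\partial_i u_n$ must be the distributional derivative $\partial_i u$ (both are the ${\cal D}'$-limit of $\partial_i u_n$), so $\partial_i u\in L^{p_i}(\R^N)$; and for $i\le N_1$ the sequence $(\partial_i u_n)$ is bounded in $M^1(\R^N)$, so its vague limit $\partial_i u$ is a bounded measure. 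Hence $u\in BV^{\vec p}(\R^N)$.

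The substance of the statement is $(1)\Rightarrow(2)$, which I would prove by first truncating $u$ to compact support and then mollifying. The delicate point is the truncation: since $u$ only lies in $L^{p^\star}$ and, for $i\le N_1$, $\partial_i u$ is merely a measure, the naive cut-off $\theta(\cdot/R)$ fails, a H\"older estimate on a ball of radius $R$ leaving an uncontrolled factor of order $R^{\sum_{i>N_1}(1-1/p_i)}$. The remedy is an \emph{anisotropic} dilation. Put $\alpha_i=\frac1{p_i}-\frac1{p^\star}$, which is $>0$ for every $i$ precisely because $p^+<p^\star$, and note $\sum_{i=1}^N\alpha_i=1$. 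With $\theta\in{\cal D}(\R^N)$, $0\le\theta\le1$, $\theta\equiv1$ on the unit ball, set $\theta_R(x)=\theta(x_1R^{-\alpha_1},\dots,x_NR^{-\alpha_N})$. Then $|\partial_i\theta_R|\le CR^{-\alpha_i}$, and $\partial_i\theta_R$ is supported in an anisotropic shell $A_R$ with $|A_R|$ of order $R$ and $A_R\subset\{\,|x|\ge cR^{\min_j\alpha_j}\,\}$.

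Writing $\partial_i(\theta_R u)=\theta_R\partial_i u+u\,\partial_i\theta_R$ and applying H\"older on $A_R$, the powers of $R$ cancel by the choice of the $\alpha_i$, giving $\int_{\R^N}|u\,\partial_i\theta_R|\le C\|u\|_{L^{p^\star}(A_R)}$ for $i\le N_1$ and $\|u\,\partial_i\theta_R\|_{p_i}\le C\|u\|_{L^{p^\star}(A_R)}$ for $i>N_1$; both tend to $0$ because $A_R$ escapes to infinity and $u\in L^{p^\star}(\R^N)$. Together with dominated/monotone convergence for the terms $\theta_R\partial_i u$, and the lower semicontinuity of the total variation and of the $L^{p_i}$ norms along $\theta_R u\to u$ in $L^1_{loc}$, this shows $\theta_R u\in BV^{\vec p}(\R^N)$ has compact support and converges tightly to $u$ as $R\to\infty$. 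It then remains to regularize a compactly supported $v\in BV^{\vec p}(\R^N)$: $v\ast\rho_\delta\in{\cal D}(\R^N)$, it converges to $v$ in $L^{p^\star}$ and in $L^{p_i}$ for $i>N_1$, while for $i\le N_1$ one has $\int|\nabla_1(v\ast\rho_\delta)|\le\int|\nabla_1 v|$ and the reverse inequality in the limit, so $v\ast\rho_\delta\to v$ tightly; a diagonal argument in $R$ and $\delta$ (tight convergence being metrizable on norm-bounded sets) then yields the desired sequence in ${\cal D}(\R^N)$. I expect the truncation to be the main obstacle, the anisotropic scaling $\alpha_i=1/p_i-1/p^\star$ — available exactly under the standing assumption $p^+<p^\star$ — being the crux.
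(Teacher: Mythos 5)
Your proof is correct and follows essentially the same route as the paper: an anisotropic cut-off at scales $R^{\alpha_i}$ with $\alpha_i=\frac{1}{p_i}-\frac{1}{p^\star}$ (the paper's truncation uses $n^{\,p^\star/p_i-1}$, i.e.\ exactly your scaling after the reparametrization $R=n^{p^\star}$), H\"older's inequality to kill the commutator terms $u\,\partial_i\theta_R$, and then mollification of the compactly supported truncates, with the easy implications $(2)\Rightarrow(3)$ and $(3)\Rightarrow(1)$ handled just as in the paper. The only differences are cosmetic (a single anisotropically dilated cut-off instead of a product of one-dimensional ones, and using the shell's escape to infinity rather than slabs $n^{\alpha_i}\leq|x_i|\leq 2n^{\alpha_i}$ to see that $\|u\|_{L^{p^\star}(A_R)}\to 0$).
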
 
                            
                              \begin{rema}
                              
                               Following the lines in the proof below,  but using  strong convergence  in $L^1$ of  $\partial_i u_n$ for $i\leq N_1$,  in place of tight convergence, 
                               it is clear that ${\cal D}^{1,\vec p}(\R^N)= \{ u\in L^{ p^\star} ( \R^N), \partial_i u \in L^1(\R^N), \ i \leq N_1, \ \partial_i u \in L^{ p_i}(\R^N), \ i \geq N_1+1\}$.
                                \end{rema}

\begin{proof} 

 Suppose that 1) holds. 

We begin by a troncature. For $1\leq i \leq N$ let $\alpha_i$ defined as 

     $$ \alpha_i = { p^\star \over p_i}-1. $$
           Let  $\varphi \in { \cal D} ( ]-2,2[)$, $\varphi = 1$  on  $[-1,1]$, and  for all $n\in \N$, 
         $$u_n ( x) = \Pi_{i=1}^N \varphi ( {x_i \over  n^{ \alpha_i}}) u(x).$$
             We  denote $C_n =    \Pi_{i=1}^N [-2 n^{ \alpha_i}, 2 n^{ \alpha_i}]$, note that $| C_n | = 4^N n^{ \sum_{i=1}^N \alpha_i} $. 
         We need to prove that $\partial_i u_n \rightarrow \partial_i u $ in $L^{p_i}(\R^N)$ for all $i\in [N_1+1, N]$.
Since 
$$\partial_i u_n(x) = u(x) \partial_i \left( \Pi_{j=1}^N \varphi ( {x_j \over  n^{ \alpha_j}})\right) + \Pi_{j=1}^N \varphi ( {x_j \over  n^{ \alpha_j}}) \partial_i u(x),$$
it is sufficient to prove that
          $ u \partial_i \left( \Pi_{j=1}^N \varphi ( {x_j \over  n^{ \alpha_j}})\right) \rightarrow 0$ in $L^{p_i}(\R^N)$.       By  H\"older's inequality
           \begin{eqnarray*}
            \int_{\R^N}  |u \partial_i \left( \Pi_{j=1}^N \varphi ( {x_j \over  n^{ \alpha_j}})\right) |^{p_i} &\leq 
            & {c \over n^{ \alpha_i p_i} } (\int_{ \R^{N-1} }\int _{ n^{ \alpha_i} \leq |x_i| \leq 2 n^{ \alpha_i}} |u|^{p^\star}  )^{ p_i \over p^\star} | C_n | ^{1-{p_i \over p^\star}}\\
             & \leq & c' n^{- \alpha_ip_i +  ( 1-{p_i \over p^\star})\sum_{j=1}^N \alpha_j } o(1)
             \end{eqnarray*}
             which tends to zero, 
            since  $u \in L^{ p^\star}(\R^N)$ implies that  $\int_{ \R^{N-1}}\int _{ n^{ \alpha_i} \leq |x_i| \leq 2 n^{ \alpha_i}} |u|^{p^\star} \underset{n\to \infty}{\rightarrow} 0$,  
             and 
 for any $i$ by the definition of $\alpha_i$,                        $  \alpha_i  p_i \geq ( 1-{p_i \over p^\star}) \sum_{j=1}^N \alpha_j $ . 
              In the same manner we have  
 $ \int_{ \R^N}  | \nabla_1 u _n-\nabla_1 u|\rightarrow 0$. 
 The second step  classically  uses  a regularisation process. Recall that 
 that when $\vec \mu$ is a  compactly supported  measure  in $\R^N$, with values in $\R^N$,     when  $\rho\in { \cal D} ( \R^N)$, $\int \rho=1$, $\rho\geq 0$,  and  $\rho_\epsilon= {1 \over \epsilon^N} \rho({x\over \epsilon})$,  
   $\rho_\epsilon \star |\vec \mu|\ {\rm converges \ tightly \ to } \ |\vec \mu|, \ \rho_\epsilon \star \mu_i^{\pm}  \ {\rm converges \ tightly \ to } \ \mu_i^{\pm}. $
  From this one derives  the tight convergence when $\epsilon$ goes to zero and $n$ to $\infty $ of  $| \nabla_1 ( \rho_\epsilon \star u_n) |$ towards $|\nabla_1 u|$. 
 
   2) implies 3)    is obvious. To prove that 3) implies 1), note that if $(u_n)$ is weakly convergent to $u$, one has the existence of some constant independent on $n$ so that 
   $ | \nabla_1 u_n |_1 +\sum_{i=N_1+1}^N| \partial_i u_n |_{p_i} \leq C$. Then by the embedding in Theorem \ref{troisi},   $(u_n)$ is bounded in $L^{p^\star}$, and by extracting subsequences from $\nabla_1 u_n$ in $M^1( \R^N, \R^{N_1} )$ weakly and from   $\partial_i u_n$   in $L^{p_i}$  weakly for $i \geq N_1+1$,  one gets that  the limit $u \in BV^{ \vec p} ( \R^N)$.
   \end{proof}

\begin{rema}\label{embBV}
Using the last proposition, one sees that  \eqref{eqemb}extends to the functions in $BV^{ \vec p} ( \R^N)$.  
\end{rema} 

We  now enounce a result which extends the definition of the "Anzelotti pairs", \cite{Anz}, see also Temam \cite{Tem}, Strang Temam in  \cite{ST}, and  \cite{FD1, FD2, FD3}.
 \begin{theo}\label{sigma1grad1}
 Let $\sigma$  a function with values in $\R^N$,  such that 
 its projection   ${\sigma^1}$ on the first $N_1$ coordinates, belongs to $L^\infty_{loc} ( \R^N, \R^{N_1})$, and suppose that for any $i \geq N_1+1$, $\sigma \cdot e_i \in L_{loc} ^{ p_i^\prime}$, and that 
 ${ \rm div} \sigma \in L^{ p^\star \over p^\star-1}_{loc}$. Then if  $u \in BV^{\vec p}_{loc} ( \R^N)$, one can define a distribution 
 $\sigma \cdot \nabla u$ in the following manner,  
  for  $\varphi \in { \cal D} ( \R^N)$, 
  $$ \langle \sigma \cdot \nabla u, \varphi \rangle = -\int_{ \R^N}{ \rm div}  \sigma   ( u\varphi) -\int_{ \R^N} (\sigma \cdot \nabla \varphi) u.$$
   Then $ \sigma \cdot \nabla u$ is  a  measure,  and  
    ${\sigma^1} \cdot \nabla_1  u:=  \sigma \cdot \nabla u - \sum _{ i = N_1+1}^N\sigma_i \partial_i u $   is  a measure absolutely continuous with respect to $|\nabla_1 u|$, with  for $\varphi \geq 0$ in ${ \cal C}_c ( \R^N)$ : 
     \begin{equation}\label{ac} \langle | {\sigma^1}\cdot \nabla_1 u|,\varphi\rangle  \leq |{\sigma^1}| _{L^\infty( Suppt \varphi)}\langle   |\nabla_1 u|,  \varphi\rangle .
     \end{equation}        Furthermore   when ${\sigma^1} \in L^\infty( \R^N, \R^{N_1})$ and $\sigma_i \in L^{p_i^\prime }( \R^N)$,  for any $i \geq N_1+1$, ${ \rm div} \sigma  \in L^{ p^\star \over p^\star-1}( \R^N)$ and $u \in BV^{ \vec p}( \R^N)$, $\sigma \cdot \nabla u$ and ${\sigma^1} \cdot \nabla_1u$ are  bounded measures on $\R^N$ and  one has  
      \begin{equation}\label{intout}
       \int _{ \R^N}\sigma \cdot \nabla u = -\int_{ \R^N}  { \rm div} ( \sigma) u
       \end{equation}
        and 
        $$| \sigma^1\cdot \nabla_1 u | \leq | \sigma^1|_\infty | \nabla_1 u |$$
      \end{theo}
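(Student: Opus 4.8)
The plan is to mimic the classical construction of the Anzellotti pairing, adapting it to the anisotropic setting where the first $N_1$ directions carry measure-valued derivatives and the remaining ones carry $L^{p_i}$ derivatives. First I would check that the proposed formula
$$\langle \sigma\cdot\nabla u,\varphi\rangle = -\int_{\R^N}{\rm div}\,\sigma\,(u\varphi) - \int_{\R^N}(\sigma\cdot\nabla\varphi)\,u$$
is well-defined: with $u\in BV^{\vec p}_{loc}$ one has $u\in L^{p^\star}_{loc}$, so ${\rm div}\,\sigma\,(u\varphi)$ is integrable because ${\rm div}\,\sigma\in L^{p^\star/(p^\star-1)}_{loc}$; and $\sigma\cdot\nabla\varphi\,u$ is integrable since on the support of $\varphi$ we have $\sigma^1\in L^\infty$, the tangential part pairs with $\nabla_1\varphi\,u\in L^{p^\star}_{loc}\subset L^1_{loc}$, while for $i\ge N_1+1$ the term $\sigma_i\,\partial_i\varphi\,u$ lies in $L^1_{loc}$ by Hölder with exponents $p_i'$ and $p_i$ (recall $u\in L^{p_i}_{loc}$ is false in general, so one should instead pair $\sigma_i\partial_i\varphi\in L^{p_i'}_{loc}$ against $u\in L^{p^\star}_{loc}$, using $p^\star\ge p_i$ which follows from $p^+<p^\star$). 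This shows $\sigma\cdot\nabla u$ is a well-defined distribution.

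Next, to see it is a measure, I would regularize: set $u_\delta=\rho_\delta\star u$ and $\sigma_\delta=\rho_\delta\star\sigma$ locally, so that the smooth identity $\langle\sigma_\delta\cdot\nabla u_\delta,\varphi\rangle=\int\sigma_\delta\cdot\nabla u_\delta\,\varphi$ holds pointwise. Then $\sigma\cdot\nabla u - \sum_{i=N_1+1}^N\sigma_i\partial_i u$ is, by definition, $\sigma^1\cdot\nabla_1 u$ in the sense of distributions; since each $\sigma_i\partial_i u\in L^1_{loc}$ (Hölder, $p_i'$ against $p_i$, now legitimately as $\partial_i u\in L^{p_i}$), it suffices to show $\sigma^1\cdot\nabla_1 u$ is a measure absolutely continuous with respect to $|\nabla_1 u|$. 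For this I would use Proposition \ref{propBV}: take $v_n\in{\cal D}(\R^N)$ converging tightly to $u$ (localized by a cutoff), for which $\sigma^1\cdot\nabla_1 v_n$ is an honest $L^1_{loc}$ function bounded pointwise by $|\sigma^1|\,|\nabla_1 v_n|$; passing to the limit and using the tight convergence $|\nabla_1 v_n|\to|\nabla_1 u|$ together with weak-$\star$ lower semicontinuity gives, for $\varphi\ge 0$ in ${\cal C}_c$,
$$\langle|\sigma^1\cdot\nabla_1 u|,\varphi\rangle \le |\sigma^1|_{L^\infty(\supp\varphi)}\,\langle|\nabla_1 u|,\varphi\rangle,$$
which is \eqref{ac} and simultaneously yields absolute continuity.

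For the global statement, assuming $\sigma^1\in L^\infty(\R^N)$, $\sigma_i\in L^{p_i'}(\R^N)$, ${\rm div}\,\sigma\in L^{p^\star/(p^\star-1)}(\R^N)$ and $u\in BV^{\vec p}(\R^N)$, I would approximate $u$ globally by $u_n\in{\cal D}(\R^N)$ converging tightly to $u$ in the sense of the $BV^{\vec p}$-tight topology (Proposition \ref{propBV}), write the integration-by-parts identity $\int_{\R^N}\sigma\cdot\nabla u_n = -\int_{\R^N}{\rm div}\,\sigma\,u_n$ (valid since $u_n$ is smooth and compactly supported), and pass to the limit: the right side converges because $u_n\to u$ in $L^{p^\star}$ and ${\rm div}\,\sigma\in L^{p^{\star\prime}}$; the terms $\int\sigma_i\partial_i u_n$ converge because $\partial_i u_n\to\partial_i u$ weakly in $L^{p_i}$ against $\sigma_i\in L^{p_i'}$; and the tangential term $\int\sigma^1\cdot\nabla_1 u_n$ is controlled by $|\sigma^1|_\infty\int|\nabla_1 u_n|$ which converges to $|\sigma^1|_\infty\int|\nabla_1 u|$ by tightness. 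This gives \eqref{intout} and the bound $|\sigma^1\cdot\nabla_1 u|\le|\sigma^1|_\infty|\nabla_1 u|$ as measures on all of $\R^N$.

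The main obstacle I anticipate is the passage to the limit in the tangential term: unlike the $L^{p_i}$ directions, one does not have weak $L^1$ convergence of $\nabla_1 u_n$ in a way that pairs nicely with a merely $L^\infty$ field $\sigma^1$, so one genuinely needs the tight (not just vague) convergence of $|\nabla_1 u_n|$ and a careful lower-semicontinuity argument — essentially a Reshetnyak-type continuity statement — to identify the limit of $\sigma^1\cdot\nabla_1 u_n$ with $\sigma^1\cdot\nabla_1 u$ and to obtain the sharp constant $|\sigma^1|_{L^\infty(\supp\varphi)}$ rather than a global sup. Handling the localization (cutoff functions interacting with ${\rm div}\,\sigma$ and with the regularization) so that no boundary or support terms are lost is the other delicate bookkeeping point.
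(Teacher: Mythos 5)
Your proposal follows essentially the same route as the paper: localize with a cutoff, approximate by smooth functions via Proposition \ref{propBV} with tight convergence, apply the classical Green formula to the approximants, and pass to the limit using the weak $L^{p_i}$ convergence of $\partial_i u_n$ against $\sigma_i\in L^{p_i'}_{loc}$ and the bound $|\sigma^1|_{L^\infty(\supp\varphi)}\int|\nabla_1 u_n|\varphi\to|\sigma^1|_{L^\infty(\supp\varphi)}\int|\nabla_1 u|\varphi$; no Reshetnyak-type argument is actually needed, since $\sigma^1\cdot\nabla_1 u$ is identified simply by subtracting the $L^{p_i}$ terms from the distributional definition of $\sigma\cdot\nabla u$. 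The only cosmetic difference is in the global identity \eqref{intout}, which the paper obtains by letting $\varphi\to 1_{\R^N}$ in the local statements (all measures being bounded) rather than by passing to the limit in the global Green identity, but this is inessential.
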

      
       \begin{proof} 
       Take $\psi\in { \cal D} ( \R^N)$, $\psi = 1$ on Suppt $\varphi$. Then if $u\in BV_{loc}^{\vec p} ( \R^N)$, $\psi u \in BV^{\vec p} (\R^N)$. By Proposition \ref{propBV}, there exists $u_n\in { \cal D} ( \R^N)$ such that $u_n$ converges tightly to $\psi u$  in $BV^{ \vec p} ( \R^N)$. 
  By  the classical Green's formula 
        $$ \int_{ \R^N} \sigma \cdot \nabla u_n \varphi = - \int_{ \R^N}{ \rm div} ( \sigma )  ( u_n\varphi) -\int _{ \R^N}(\sigma \cdot \nabla \varphi) u_n.$$
        Using the weak convergence of $u_n$ towards $\psi u$ one gets that $\int (\sigma\cdot\nabla u_n)\varphi$ converges to 
        $\langle\sigma\cdot \nabla u, \varphi \rangle$. By the assumptions on $\sigma_i$ and $\partial_i u_n$, one  has $\int \sigma_i \partial_i u_n \varphi \rightarrow \int \sigma_i \partial_i u \varphi$,  for $i \geq N_1+1$, hence $\int (\sigma^1 \cdot \nabla_1 u_n) \varphi\rightarrow \langle\sigma^1\cdot \nabla_1 u, \varphi \rangle$. Furthermore, using  for $\varphi \geq 0$,  
        $|\int \sigma^1 \cdot \nabla_1 u_n \varphi | \leq |\sigma^1|_{ L^\infty ( Suppt \varphi)} \int |\nabla_1 u_n |  \varphi\rightarrow  |\sigma^1|_{ L^\infty ( Suppt \varphi)} \int |\nabla_1 u |  \varphi$,  one gets (\ref{ac}). 
         The  identity ( \ref{intout})  is easily obtained by letting $\varphi$ go to $1_{\R^N}$, since all the measures involved are bounded measures.
         \end{proof}

     \bigskip
   \subsection{The approximated space ${ \cal D}^{1, \vec p_\epsilon}( \R^N)$}  
     
 Let $\epsilon >0$ small,  define 
 \begin{equation}\label{defai} a_i^\epsilon = { (p_i-1) p_i \epsilon^2\over 1-\epsilon (p_i-1)},\  
 \epsilon_i = p_i \epsilon + a_i^\epsilon\ 
 {\rm  and }\ p_i^\epsilon = p_i ( 1+ \epsilon_i).
 \end{equation} 
    Note that one has 
     for all $i \geq N_1+1$, 
     $ { p_i ( 1+ \epsilon_i) \over \epsilon_i} = { 1+ \epsilon \over \epsilon}$. 
      We define ${ \cal D}^{1, \vec p_\epsilon}(\R^N)$ as the closure of ${ \cal D}(\R^N)$ for the  norm 
        $ | \nabla_1 v |_{1+ \epsilon} + \sum_{i=N_1+1}^N| \partial_i v |_{p_i^\epsilon}$.
    Then 
      the critical exponent  $p_\epsilon^\star$ for this space is  defined by       ${ N \over p_\epsilon^\star}  = { N_1 \over 1+ \epsilon} + \sum_i { 1 \over p_i^\epsilon}-1$. 
            Note that 
       $p_\epsilon^\star$ satisfies 
       $${ N \over p_\epsilon^\star} = { N \over p^\star}-{ \epsilon N \over 1+ \epsilon}, $$
        and as soon as $\epsilon$ is small enough, $p^+_\epsilon < p_\epsilon^\star$. 
       Let us finally define 
      \begin{equation}\label{eqlambda}
      \lambda_\epsilon = { p_\epsilon^\star \epsilon \over 1+ \epsilon} + 1,
      \end{equation} 
     and note for further purposes that $\lambda_\epsilon p^\star = p_\epsilon^\star$. 
     
    Recall that   as a consequence of the embedding of Troisi, \cite{T} one has

   $$\mathcal{D}^{1,\vec{p_\epsilon}}(\R^N) \hookrightarrow L^{p_\epsilon^*}(\R^N),
    $$
      and there exists some $ T_0^\epsilon>0$, such that for all $u \in { \cal D}^{1, \vec p_\epsilon}( \R^N)$, 
   $$T_0^\epsilon |u|_{p_\epsilon^*} \leq \prod_{i=1}^N |\partial_i u|_{p_{i}^\epsilon}^{1\over N}, \text{ and  then } |u|_{p_\epsilon^*} \leq {1\over N T_0^\epsilon } ( \sqrt{N_1}  | \nabla_1 u | _{1+\epsilon} + \sum_{i=N_1+1}^N |\partial_i u|_{p_i^\epsilon} )
$$
for all $u\in \mathcal{D}^{1,\vec{p_\epsilon}}(\R^N)$.

     \bigskip

  Let us define 
          $${\cal K}_\epsilon= \inf_{u\in { \cal D}^{1, \vec p_\epsilon}(\R^N), |u|_{ p_\epsilon^\star} = 1} \left[{1 \over 1+ \epsilon} | \nabla_1 u |_{1+ \epsilon}^{1+ \epsilon} + \sum_{i=N_1+1}^N { 1 \over p_i^\epsilon} | \partial_i u |_ {p_i^\epsilon} ^{p_i^\epsilon} \right]$$
and  
 $${\cal K} = \inf_{u\in { \cal D}^{1, \vec p}(\R^N), |u|_{ p^\star} = 1} \left[ | \nabla_1 u |_{1} + \sum_{i=N_1+1}^N { 1 \over p_i} | \partial_i u |_ {p_i} ^{p_i}\right] $$

It is clear  by Proposition \ref{propBV} that 
$$ {\cal K} =  \inf_{u\in BV^{\vec p}(\R^N), |u|_{ p^\star} = 1} \left[  \int | \nabla_1 u | + \sum_{i=N_1+1}^N { 1 \over p_i} | \partial_i u |_ {p_i} ^{p_i} \right]$$
Adapting the proof in  \cite{HR2} one has the following result
 \begin{theo}
  There exists $u_\epsilon\in { \cal D}^{1, \vec p_\epsilon}( \R^N)$  non negative which satisfies $|u_\epsilon |_{ p_\epsilon^\star} = 1$ and 
  $$ {\cal K} _\epsilon  =  {1 \over 1+ \epsilon} | \nabla_1 u_\epsilon |_{1+ \epsilon}^{1+ \epsilon} + \sum_{i=N_1+1}^N { 1 \over p_i^\epsilon} | \partial_i u_\epsilon |_ {p_i^\epsilon} ^{p_i^\epsilon} .$$
   Furthermore there exists $l_\epsilon>0,$ so that  
    \begin{equation}\label{extremalepsilon} -\sum_{i=1}^{N_1} \partial_i ( | \nabla_1 u_\epsilon|^{ \epsilon-1}\partial_i u_\epsilon) -\sum_{ i=N_1+1}^N \partial_i ( | \partial_i u_\epsilon |^{ p_i^\epsilon-2} \partial_i u_\epsilon ) = l_\epsilon u_\epsilon ^{ p_\epsilon^\star-1}.
    \end{equation} 
     
     \end{theo}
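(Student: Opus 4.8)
The plan is to prove this result by the direct method in the calculus of variations applied to the functional
$$J_\epsilon(u) = {1\over 1+\epsilon}|\nabla_1 u|_{1+\epsilon}^{1+\epsilon} + \sum_{i=N_1+1}^N {1\over p_i^\epsilon}|\partial_i u|_{p_i^\epsilon}^{p_i^\epsilon}$$
on the constraint manifold $\{u\in{\cal D}^{1,\vec p_\epsilon}(\R^N):\ |u|_{p_\epsilon^\star}=1\}$, following the scheme of \cite{HR2}. First I would take a minimizing sequence $(v_k)$; since the exponents $1+\epsilon$ and $p_i^\epsilon$ are all strictly larger than $1$, the space ${\cal D}^{1,\vec p_\epsilon}(\R^N)$ is reflexive (it embeds isometrically into a product of $L^{p_i^\epsilon}$ spaces with $p_i^\epsilon>1$), so up to a subsequence $v_k\rightharpoonup u$ weakly, with each $\partial_i v_k\rightharpoonup\partial_i u$. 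The crucial point, exactly as in the critical isotropic case, is that weak convergence alone may lose mass in $L^{p_\epsilon^\star}$ because the embedding ${\cal D}^{1,\vec p_\epsilon}\hookrightarrow L^{p_\epsilon^\star}$ is not compact on $\R^N$; one must rule out vanishing and dichotomy. This is handled by the anisotropic concentration–compactness argument of P.L. Lions adapted in \cite{HR2}: after rescaling $v_k$ by the anisotropic dilations $x_i\mapsto t^{\alpha_i}x_i$ that leave the $L^{p_\epsilon^\star}$-norm and the homogeneity structure invariant (here $\alpha_i$ chosen so $\alpha_i p_i^\epsilon$ is constant, using the identity $p_i^\epsilon(1+\epsilon_i)/\epsilon_i$ being constant recorded in the text), and using translation invariance, one may assume the "concentration function" of $|v_k|^{p_\epsilon^\star}$ is normalized; the strict subcriticality $p^+_\epsilon<p_\epsilon^\star$ together with the strict sub-additivity of the limiting variational problem excludes dichotomy, and the normalization excludes vanishing, so $|v_k|^{p_\epsilon^\star}\to|u|^{p_\epsilon^\star}$ tightly, forcing $|u|_{p_\epsilon^\star}=1$. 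By weak lower semicontinuity of $J_\epsilon$ (each term $|\cdot|_{q}^{q}$ with $q>1$ is convex and strongly continuous, hence weakly l.s.c.), $J_\epsilon(u)\le\liminf J_\epsilon(v_k)={\cal K}_\epsilon$, so $u$ is a minimizer; replacing $u$ by $|u|$ only decreases each term (the map is even in $\partial_i u$ for $i>N_1$ and $|\nabla_1|u||=|\nabla_1 u|$ a.e.), so we may take $u_\epsilon\ge 0$.

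Next I would derive the Euler–Lagrange equation. Since $u_\epsilon$ minimizes $J_\epsilon$ under the single scalar constraint $G(u):=|u|_{p_\epsilon^\star}^{p_\epsilon^\star}=1$, and $G'(u_\epsilon)\ne 0$ (as $u_\epsilon\ne 0$), the Lagrange multiplier rule gives $J_\epsilon'(u_\epsilon)=\mu\, G'(u_\epsilon)$ in the dual of ${\cal D}^{1,\vec p_\epsilon}(\R^N)$ for some $\mu\in\R$. Computing the Gâteaux derivatives against test functions $\ph\in{\cal D}(\R^N)$ gives, in the distributional sense,
$$-\sum_{i=1}^{N_1}\partial_i\bigl(|\nabla_1 u_\epsilon|^{\epsilon-1}\partial_i u_\epsilon\bigr)-\sum_{i=N_1+1}^N\partial_i\bigl(|\partial_i u_\epsilon|^{p_i^\epsilon-2}\partial_i u_\epsilon\bigr)=\mu\, p_\epsilon^\star\, u_\epsilon^{p_\epsilon^\star-1},$$
which is \eqref{extremalepsilon} with $l_\epsilon=\mu\, p_\epsilon^\star$ once we check $l_\epsilon>0$. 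Positivity of $l_\epsilon$ follows by testing the equation with $u_\epsilon$ itself (legitimate by the density/embedding, since $u_\epsilon\in{\cal D}^{1,\vec p_\epsilon}(\R^N)\cap L^{p_\epsilon^\star}$): the left side yields $|\nabla_1 u_\epsilon|_{1+\epsilon}^{1+\epsilon}+\sum_{i>N_1}|\partial_i u_\epsilon|_{p_i^\epsilon}^{p_i^\epsilon}>0$ (strictly, as $u_\epsilon\not\equiv 0$), while the right side is $l_\epsilon\,|u_\epsilon|_{p_\epsilon^\star}^{p_\epsilon^\star}=l_\epsilon$, hence $l_\epsilon>0$.

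The main obstacle is the concentration–compactness step: because ${\cal D}^{1,\vec p_\epsilon}(\R^N)$ is not contained in ${\cal D}^{1,\vec p}(\R^N)$ and the embedding into $L^{p_\epsilon^\star}$ is noncompact, I must genuinely run the anisotropic version of Lions' lemma — identifying the right family of anisotropic dilations under which $J_\epsilon$ and $G$ scale compatibly, proving the strict subadditivity inequality ${\cal K}_\epsilon<{\cal K}_\epsilon(\theta)+{\cal K}_\epsilon(1-\theta)$-type estimate that kills dichotomy (this is where $p^+_\epsilon<p_\epsilon^\star$ enters decisively), and using normalized concentration functions plus translations to kill vanishing. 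All of this is carried out in \cite{HR2} for the case where every $p_i>1$, which is precisely our situation for the \emph{approximated} exponents $p_i^\epsilon$ (note $1+\epsilon>1$ too), so the statement "Adapting the proof in \cite{HR2}" is justified; I would simply indicate the three points (dilation invariance, exclusion of dichotomy via strict subcriticality, exclusion of vanishing) and refer to \cite{HR2, HR1, PL1, PL2} for the detailed verification, then record the Euler–Lagrange computation and the sign of $l_\epsilon$ as above.
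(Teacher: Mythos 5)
Your proposal takes essentially the same route as the paper, which establishes this theorem only by adapting the concentration--compactness scheme of \cite{HR2} (minimizing sequence, anisotropic dilations and L\'evy concentration-function normalization to rule out vanishing/dichotomy/concentration, then the Lagrange multiplier rule and testing the equation with $u_\epsilon$ to get $l_\epsilon>0$), so it is consistent with the paper's argument and even more explicit. The only slip is the parenthetical description of the dilations: the correct choice is $\alpha_i^\epsilon=\frac{p_\epsilon^\star}{p_i^\epsilon}-1$, i.e. $(1+\alpha_i^\epsilon)p_i^\epsilon=p_\epsilon^\star=\sum_j\alpha_j^\epsilon$ for all $i$ (not ``$\alpha_i p_i^\epsilon$ constant''), which is what makes $|u|_{p_\epsilon^\star}$ and every $|\partial_i u|_{p_i^\epsilon}$ invariant; this does not affect the argument.
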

     
      \bigskip
      
      In the sequel we will use the notation ${\rm div}_1 $ as the divergence  of some $N_1$ vector  with respect to the $N_1$ first variables. 
    
      By multiplying equation ( \ref{extremalepsilon}) by $u_\epsilon$ and integrating one has 
      ${\cal K} _\epsilon \leq l_\epsilon \leq  p_\epsilon^+ {\cal K} _\epsilon$ , and as we will see in Proposition \ref{limsup} that $\limsup {\cal K}_\epsilon \leq {\cal K}, $
    if   $u_\epsilon$ is an extremal function for ${\cal K} _\epsilon$, 
  $ | \nabla_1 u_\epsilon |_{1+ \epsilon} $ and $| \partial_i u_\epsilon |_ {p_i^\epsilon} $ are bounded independently on $\epsilon$, hence one can extract from it a subsequence which converges weakly in $BV^{ \vec p}$.
    In the sequel we will prove that by choosing conveniently the sequence $u_\epsilon$,  it converges  up to subsequence to an extremal function for ${\cal K}$.

  \section{The main results}
  
   The main result of this paper is the following :
    
    \begin{theo}\label{exiext}
    1)  There exists $v_\epsilon\in { \cal D}^{1, \vec p_\epsilon}( \R^N) $, $|v_\epsilon|_{ p^\star_\epsilon} = 1$,  an extremal function for ${\cal K} _\epsilon$,  which converges  in the following sense  to $v\in BV^{ \vec p} ( \R^N)$: 
      $v_\epsilon$ converges to $v$   in the distribution sense,   and almost everywhere,      $\int_{ \R^N}  | \nabla_1 v_\epsilon |^{1+ \epsilon} \rightarrow \int_{ \R^N} | \nabla_1 v | $, 
       $\int_{ \R^N} | \partial_i v_\epsilon |^{ p_i^\epsilon} \rightarrow \int_{ \R^N} | \partial_i v |^{p_i}$  for  all $i \geq N_1+1$,  and 
       $ |v|_{ p^\star} = 1$. Furthermore $\lim {\cal K}_\epsilon = {\cal K} $.  As a consequence $v$ is an extremal function for ${\cal K} $.

       2)  $v$ satisfies the partial differential equation :   
       \begin{equation} \label{partial differential equation }-{ \rm div}_1 ( {\sigma^1}) -\sum_{ i= N_1+1}^N \partial_i ( | \partial_i  v |^{p_i-2} \partial_i v) = lv^{ p^\star-1}, \ {\sigma^1} \cdot \nabla_1 v = | \nabla_1 v | 
       \end{equation}
        where ${\cal K} \leq l\leq p^+ {\cal K} $. 
       \end{theo}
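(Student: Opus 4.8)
The plan is to start from the extremal functions $u_\epsilon$ for ${\cal K}_\epsilon$ produced by the previous theorem, and to arrange by translation/dilation invariance a well-chosen representative $v_\epsilon$ that does not let mass escape to infinity or concentrate. First I would record the a priori bounds: since $\limsup{\cal K}_\epsilon\le {\cal K}$ (Proposition \ref{limsup}), the quantities $|\nabla_1 u_\epsilon|_{1+\epsilon}$ and $|\partial_i u_\epsilon|_{p_i^\epsilon}$ are bounded independently of $\epsilon$. Using the anisotropic dilations $x\mapsto(\lambda_1 x_1,\dots,\lambda_N x_N)$ and translations, which preserve the constraint $|u|_{p_\epsilon^\star}=1$, I would normalize $v_\epsilon$ so that a concentration function is pinned at a fixed level at a fixed point; this is the analogue of the standard reduction in the concentration–compactness argument of \cite{PL1, HR2}. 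The subtlety flagged in the introduction is that ${\cal D}^{1,\vec p_\epsilon}(\R^N)$ is not contained in ${\cal D}^{1,\vec p}(\R^N)$, so I cannot quote the concentration–compactness lemma verbatim; instead I would prove the needed dichotomy/compactness directly for the family $(v_\epsilon)$, using the uniform Troisi inequality \eqref{eqemb} (whose constant $T_0^\epsilon$ converges to $T_0$) and lower semicontinuity along the tight convergence of measures from Proposition \ref{propBV} and Remark \ref{embBV}.

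Granting the compactness, I would extract a subsequence with $v_\epsilon\rightharpoonup v$ in the weak $BV^{\vec p}$ sense, $v_\epsilon\to v$ a.e.\ and in $L^q_{loc}$ for $q<p^\star$, $\nabla_1 v_\epsilon$ converging vaguely (in fact tightly, by the no-vanishing/no-dichotomy normalization) to $\nabla_1 v$ in $M^1$, and $\partial_i v_\epsilon\rightharpoonup\partial_i v$ in $L^{p_i}$ for $i\ge N_1+1$. The Brezis–Lieb-type splitting plus the sharp embedding then forces $|v|_{p^\star}=1$ and $\int|\nabla_1 v_\epsilon|^{1+\epsilon}\to\int|\nabla_1 v|$, $\int|\partial_i v_\epsilon|^{p_i^\epsilon}\to\int|\partial_i v|^{p_i}$; combined with $\liminf{\cal K}_\epsilon\ge{\cal K}$ (from weak lower semicontinuity evaluated against $v$) and $\limsup{\cal K}_\epsilon\le{\cal K}$ this yields $\lim{\cal K}_\epsilon={\cal K}$ and that $v$ is extremal for ${\cal K}$. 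Here the convergence $\int|\partial_i v_\epsilon|^{p_i^\epsilon}\to\int|\partial_i v|^{p_i}$ needs a short argument matching the exponents $p_i^\epsilon\to p_i^+$ off a negligible set, using the definition \eqref{defai} of $p_i^\epsilon$ and equi-integrability; this and the no-loss-of-mass step are where the real work lies.

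For part 2), I would pass to the limit in the Euler–Lagrange equation \eqref{extremalepsilon}. Write $\sigma_\epsilon^1=|\nabla_1 u_\epsilon|^{\epsilon-1}\nabla_1 u_\epsilon$; since $|\nabla_1 u_\epsilon|$ is bounded in $L^{1+\epsilon}$, $\sigma_\epsilon^1$ is bounded in $L^{(1+\epsilon)/\epsilon}$, hence (up to a subsequence) converges weakly-$\ast$ to some $\sigma^1\in L^\infty(\R^N,\R^{N_1})$ with $|\sigma^1|_\infty\le 1$; likewise $|\partial_i u_\epsilon|^{p_i^\epsilon-2}\partial_i u_\epsilon\rightharpoonup |\partial_i v|^{p_i-2}\partial_i v$ in $L^{p_i'}$ for $i\ge N_1+1$ (standard monotonicity/Minty argument, since $\partial_i u_\epsilon\to\partial_i v$ in $L^{p_i}$ strongly by the tight convergence). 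The constants $l_\epsilon\in[{\cal K}_\epsilon,p_\epsilon^+{\cal K}_\epsilon]$ converge along a subsequence to some $l\in[{\cal K},p^+{\cal K}]$, and $u_\epsilon^{p_\epsilon^\star-1}\to v^{p^\star-1}$ in $L^{(p^\star)'}$ by the strong $L^{p^\star}$ convergence. This gives the distributional equation $-{\rm div}_1(\sigma^1)-\sum_{i\ge N_1+1}\partial_i(|\partial_i v|^{p_i-2}\partial_i v)=l v^{p^\star-1}$, with ${\rm div}\sigma\in L^{(p^\star)'}$, so Theorem \ref{sigma1grad1} applies and $\sigma^1\cdot\nabla_1 v$ is a well-defined measure with $|\sigma^1\cdot\nabla_1 v|\le|\nabla_1 v|$. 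It remains to identify $\sigma^1\cdot\nabla_1 v=|\nabla_1 v|$: I would test the equation for $u_\epsilon$ against $u_\epsilon$, pass to the limit using $\int\sigma_\epsilon^1\cdot\nabla_1 u_\epsilon=\int|\nabla_1 u_\epsilon|^{1+\epsilon}\to\int|\nabla_1 v|$ together with the strong convergence of the $\partial_i u_\epsilon$ for $i\ge N_1+1$ and of $u_\epsilon$ in $L^{p^\star}$, and compare with \eqref{intout} applied to the limit; the resulting equality of total masses combined with the pointwise bound \eqref{ac} upgrades to $\sigma^1\cdot\nabla_1 v=|\nabla_1 v|$ as measures. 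The main obstacle is the first part—ruling out vanishing and dichotomy for the non-nested family $(v_\epsilon)$ and getting the exact convergence of the anisotropic energies across the moving exponents $p_i^\epsilon$—since once tight convergence in $BV^{\vec p}$ is secured, the passage to the limit in the PDE is routine.
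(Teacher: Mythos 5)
Your outline of part 2) is essentially workable, and your identification of $\sigma^1\cdot\nabla_1 v=|\nabla_1 v|$ by a global mass balance (test \eqref{extremalepsilon} against $v_\epsilon$, pass to the limit, compare with \eqref{intout} for the limit equation, and use $\sigma^1\cdot\nabla_1 v\le|\nabla_1 v|$ from \eqref{ac}) is a legitimate variant of the paper's argument, which instead gets the local inequality $|\nabla_1 v|\le\sigma^1\cdot\nabla_1 v$ by lower semicontinuity against every $\varphi\ge 0$; both routes rest on the tight convergences of part 1), and your weak-$\ast$ limit of $\sigma^{1,\epsilon}$ should be justified as in the paper via the local bound $|\sigma^{1,\epsilon}|_{L^q(K)}\le((1+\epsilon){\cal K}_\epsilon)^{\epsilon/(1+\epsilon)}|K|^{1/q-\epsilon/(1+\epsilon)}$ rather than by invoking boundedness in $L^{(1+\epsilon)/\epsilon}$ with a moving exponent.

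The genuine gap is in part 1), which you leave at the level of a plan (``Granting the compactness\dots''). The statement ``the Brezis--Lieb-type splitting plus the sharp embedding then forces $|v|_{p^\star}=1$'' is precisely what has to be proved, and your appeal to $\liminf{\cal K}_\epsilon\ge{\cal K}$ ``by weak lower semicontinuity evaluated against $v$'' is circular: lower semicontinuity only gives $|\nabla_1 v|_1+\sum\frac1{p_i}|\partial_i v|_{p_i}^{p_i}\le\liminf{\cal K}_\epsilon$, and by Lemma \ref{lem1} this is bounded below by ${\cal K}\,|v|_{p^\star}^{p^+}$, which is useless until one knows $|v|_{p^\star}=1$ --- i.e.\ until vanishing, concentration at points and escape to infinity are excluded. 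The paper's mechanism for this, absent from your proposal, is the substitution $w_\epsilon=v_\epsilon^{\lambda_\epsilon}$ with $\lambda_\epsilon$ as in \eqref{eqlambda}, chosen so that $\lambda_\epsilon p^\star=p_\epsilon^\star$: then $|w_\epsilon|_{p^\star}=1$ and $w_\epsilon$ is bounded in the \emph{fixed} space $BV^{\vec p}(\R^N)$, which is what makes a fixed-exponent concentration--compactness analysis (Theorem \ref{thcomp}) possible despite ${\cal D}^{1,\vec p_\epsilon}\not\subset{\cal D}^{1,\vec p}$. One then needs the quantitative defect inequalities ${\cal K}\nu_j^{p^+/p^\star}\le\tau_j+\sum_i\frac1{p_i}\mu_j^i$ and ${\cal K}\nu_\infty^{p^+/p^\star}\le\mu_\infty$, and the chain of inequalities combining $\limsup{\cal K}_\epsilon\le{\cal K}$ with the subadditivity of $t\mapsto t^{p^+/p^\star}$, which forces all inequalities to be equalities, so that exactly one of $\int|v|^{p^\star},\ \nu_j,\ \nu_\infty$ is nonzero and equals $1$; finally the Levy-type normalization of Lemma \ref{lemveps} ($\int_{B(0,1)}|v_\epsilon|^{p_\epsilon^\star}=\frac12=\sup_y\int_{B(y,1)}$) caps each $\nu_j$ and $\nu_\infty$ by $\frac12$, leaving $|v|_{p^\star}=1$ and hence the vanishing of all defect measures, which is exactly the claimed convergence of the energies. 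Without this power trick and the equality-case bookkeeping (or an equivalent substitute), your ``no-loss-of-mass step'' --- which you yourself flag as where the real work lies --- is not carried out, and the theorem is not proved.
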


        The proof of Theorem \ref{exiext} is given in the next subsection, and it relies of course on a convenient adaptation of the PL Lions compactness concentration theory.  
         However, due to the fact that the exponents of the derivatives and the critical exponent vary with  $\epsilon$, we are led to introduce a power  $v_\epsilon ^{ \lambda_\epsilon}$ of some convenient extremal function,  - where  $\lambda_\epsilon $ has been defined in (\ref{eqlambda})-,  and to  analyze the behaviour of this new sequence, which belongs to $BV^{\vec p}( \R^N)$,  and is bounded in that space, independently on $\epsilon$, as we will see later.         
          \bigskip
          
           In a second time we prove that 
           
            \begin{theo}\label{Linfty}
           Let $v$ be given by Theorem \ref{exiext}. Then 
           $v\in L^\infty ( \R^N)$  and there exists some constant $C( |v|_{p^\star})$ depending on the $L^{p^\star}$ norm of $v$ and on universal constants, such that 
           $|v|_\infty \leq C ( |v|_{ p^\star})$. 
           \end{theo}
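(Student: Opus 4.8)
\textbf{Proof plan for Theorem \ref{Linfty}.}

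The plan is to run a De Giorgi--Nash--Moser iteration directly on the equation \eqref{partial differential equation }, adapted to the anisotropic setting and to the presence of the measure term $\mathrm{div}_1(\sigma^1)$. First I would observe that, since $|\sigma^1|_\infty\le 1$ and $\sigma^1\cdot\nabla_1 v = |\nabla_1 v|$, the first-order term behaves as a ``$1$-Laplacian'' and contributes a nonnegative quantity when we test against suitable truncations: for $k\ge 0$ set $v_k = (v-k)^+$, and use $v_k^{\beta}$ (or a bounded Lipschitz variant, to stay in $BV^{\vec p}$) as a test function. The delicate point is that $v_k^{\beta}$ is not admissible in $BV^{\vec p}$ a priori; I would remedy this by first testing with $G(v_k)$ where $G$ is bounded and Lipschitz, deriving the estimate, and then letting $G$ increase to $t\mapsto t^{\beta}$ by monotone convergence, exactly as in the derivation of \eqref{intout} from \eqref{ac}. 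Testing \eqref{partial differential equation } against such a function, the term $\langle \mathrm{div}_1\sigma^1, v_k^{\beta}\varphi\rangle$ produces $\int \sigma^1\cdot\nabla_1(v_k^\beta) = \beta\int v_k^{\beta-1}\,\sigma^1\cdot\nabla_1 v_k \ge 0$ on $\{v>k\}$ (using \eqref{ac}), so it can simply be discarded from the left-hand side, leaving us with the genuinely elliptic terms $\sum_{i\ge N_1+1}\int |\partial_i v|^{p_i-2}\partial_i v\,\partial_i(v_k^{\beta})$.

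Next I would establish a Caccioppoli-type inequality on the level sets $A(k)=\{v>k\}$: for $i\ge N_1+1$,
\[
\sum_{i=N_1+1}^N \int_{A(k)} |\partial_i v|^{p_i}\,(v-k)^{\beta-1} \;\le\; C\,l\int_{A(k)} v^{p^\star-1}(v-k)^{\beta},
\]
from which, after the usual algebraic manipulation $w := (v-k)^{(\beta-1+p_i)/p_i}$ and absorbing the mixed $p_i$ via Young's inequality across the coordinates, one gets control of $\prod_i |\partial_i w|_{p_i}^{1/N}$ and hence, by the anisotropic Troisi embedding \eqref{eqemb} (valid on $BV^{\vec p}$ by Remark \ref{embBV}), control of $|w|_{p^\star}$. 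Since $p^\star > p^+ \ge p^\star-1$... more precisely since the ``gain'' exponent $p^\star$ strictly exceeds the natural exponent coming from the right-hand side — this is where the hypothesis $p^+<p^\star$ enters — one obtains a genuine reverse-Hölder / self-improving inequality of the form $\|v_k\|_{L^{q\chi}}\le (Ck^{\text{?}}\dots)$ relating the $L^{q\chi}$ norm of $v$ on $A(k)$ to its $L^q$ norm on $A(h)$ for $h<k$, with $\chi>1$ a fixed anisotropic Sobolev exponent. The bookkeeping here is the routine but lengthy part: one must track how the mixed exponents $p_i$ combine, using that $\sum_i 1/p_i$ is what controls $p^\star$, and choose $\beta = \beta_m$ growing geometrically.

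Finally I would set up the standard iteration: with $k_m = C^\ast(1-2^{-m})$ and $q_m=\chi^m q_0$, the reverse-Hölder inequalities chain together to give $\|(v-C^\ast)^+\|_{L^\infty}=0$ once $C^\ast$ is chosen large enough depending only on $|v|_{p^\star}$, $l$, $N$, $\vec p$ and the Troisi constant $T_0$; since $l\le p^+\mathcal K$ and $\mathcal K$ is universal, and $|v|_{p^\star}=1$, the bound $|v|_\infty\le C(|v|_{p^\star})$ follows. The same argument applied to $-v$ (or rather, recalling $v\ge 0$, it is not needed) closes the estimate. The main obstacle I anticipate is twofold: making the truncation test functions legitimate in $BV^{\vec p}$ and correctly handling the measure term $\mathrm{div}_1\sigma^1$ (solved by the monotone-approximation/discarding argument above), and carrying out the anisotropic Sobolev bookkeeping so that the exponent gap $p^\star - p^+ > 0$ yields a strictly supergeometric iteration — this is exactly the place where the structural hypothesis $p^+<p^\star$ is indispensable, paralleling the Fusco--Sbordone \cite{FS} and \cite{HR2} arguments, which I would follow closely while inserting the $1$-Laplacian modifications.
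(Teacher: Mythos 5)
There is a genuine gap at the very first step, and it is the step on which everything else hinges. You propose to test the limit equation of Theorem \ref{exiext} with truncations $v_k^{\beta}$ and to argue that the term coming from $-{\rm div}_1(\sigma^1)$ equals $\beta\int v_k^{\beta-1}\,\sigma^1\cdot\nabla_1 v_k\ge 0$ ``using \eqref{ac}'', so that it can be discarded. But $\sigma^1\cdot\nabla_1 w$ is only defined through the pairing of Theorem \ref{sigma1grad1}; no chain rule $\sigma^1\cdot\nabla_1(g(v))=g'(v)\,\sigma^1\cdot\nabla_1 v$ is available in $BV^{\vec p}$, and \eqref{ac} gives only the upper bound $|\sigma^1\cdot\nabla_1 w|\le|\sigma^1|_\infty|\nabla_1 w|$, hence no sign information: the identity $\sigma^1\cdot\nabla_1 v=|\nabla_1 v|$ does not transfer to $g(v)$ by any soft argument. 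Worse, even if you could discard that term, the scheme would then collapse at the embedding step: after discarding you control only $\sum_{i\ge N_1+1}\int g'(v)|\partial_i v|^{p_i}$, while the anisotropic Troisi inequality \eqref{eqemb} involves \emph{all} $N$ partial derivatives, including the $\nabla_1$ block in $L^1$; with no bound on $\int|\nabla_1(g(v))|$ there is no gain of integrability and no reverse-H\"older inequality to iterate. So the first-order term must be \emph{kept}, which forces you to prove the equality $\sigma^1\cdot\nabla_1(g(v))=|\nabla_1(g(v))|$ for the truncations you use.

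This is exactly what the paper does, and it does so not on the limit equation but one level up: after the rescaling of Lemma \ref{lemuueps} (normalizing $l=1$), Lemma \ref{g(u)} tests the approximating equations \eqref{extepsilon} satisfied by $u_\epsilon$ with $g(u_\epsilon)$, where the first term is simply $\int g'(u_\epsilon)|\nabla_1 u_\epsilon|^{1+\epsilon}\ge 0$, and then passes to the limit using the tight convergence of $u_\epsilon$, lower semicontinuity, and a squeeze with \eqref{ac}; this yields the identity \eqref{intgu} with the full term $\int|\nabla_1(g(u))|$ on the left, which is what feeds \eqref{eqemb} in Corollary \ref{coruua} and Proposition \ref{propLq}. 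If you want to argue directly on the limit equation you would have to supply an Anzellotti-type chain rule for the pairing in this anisotropic $BV^{\vec p}$ setting, which is a nontrivial missing lemma in your plan. A secondary, smaller point: since the right-hand side has the critical power $v^{p^\star-1}$, a pure power iteration does not close on exponent counting alone; both the paper's $L^q$ step (via $u\min(u^a,L)$ and the smallness of $\epsilon_k=\int_{u\ge k}u^{p^\star}$) and its final Stampacchia-type argument on $y(k)=\int_k^\infty|A_\tau|\,d\tau$ rely on this absorption, and your De Giorgi levels $k_m=C^\ast(1-2^{-m})$ would need the same ingredient; the gap $p^+<p^\star$ by itself does not neutralize the criticality.
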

           
           \subsection{Proof of Theorem \ref{exiext}}
           
           The proof is the consequence of several lemmata and propositions. 
           \begin{lemme}\label{lem1}
            Suppose that  $u \in BV ^{\vec p} ( \R^N)$, and that $|u|_{ p^\star} \leq 1$, then 
             $$ {\cal K}  |u|_{p^\star}^{ p^+} \leq  | \nabla_1 u |_1 + \sum_{i=N_1+1}^N {1 \over p_i}  | \partial_i u |_{p_i}^{ p_i}$$
           and analogously if $u \in { \cal D}^{1, \vec p_\epsilon}( \R^N)$,  $|u|_{ p^\star_\epsilon} \leq 1 $
                          $$ {\cal K} _\epsilon |u|_{p_\epsilon^\star}^{ p_\epsilon^+} \leq {1 \over 1+ \epsilon}  | \nabla_1 u |_{1+\epsilon}^{1+ \epsilon}  + \sum_{i=N_1+1}^N {1 \over p_i^\epsilon }| \partial_i u |_{p_i^\epsilon}^{ p_i^\epsilon}. $$ 
                           \end{lemme}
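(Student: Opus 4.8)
The plan is a one-line scaling argument resting on the homogeneity of the functionals appearing in the definitions of $\mathcal{K}$ and $\mathcal{K}_\epsilon$, together with the hypothesis that the relevant critical norm is $\le 1$. If $u=0$ both inequalities are trivial, so I assume $u\neq 0$ and set $v := u/|u|_{p^\star}$ in the first case, $v := u/|u|_{p_\epsilon^\star}$ in the second. Then $v$ still lies in $BV^{\vec p}(\R^N)$, resp. in $\mathcal{D}^{1,\vec p_\epsilon}(\R^N)$ (these spaces are stable under multiplication by a scalar), and $|v|_{p^\star}=1$, resp. $|v|_{p_\epsilon^\star}=1$.

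Using $v$ as a competitor in the variational problems — recalling that, by Proposition~\ref{propBV}, $\mathcal{K}$ is equally the infimum taken over $BV^{\vec p}(\R^N)$ — I obtain
\[
\mathcal{K}\ \le\ |\nabla_1 v|_1+\sum_{i=N_1+1}^N\frac1{p_i}|\partial_i v|_{p_i}^{p_i},
\qquad
\mathcal{K}_\epsilon\ \le\ \frac1{1+\epsilon}|\nabla_1 v|_{1+\epsilon}^{1+\epsilon}+\sum_{i=N_1+1}^N\frac1{p_i^\epsilon}|\partial_i v|_{p_i^\epsilon}^{p_i^\epsilon}.
\]
Next I substitute $v=u/|u|_{p^\star}$ (resp. $u/|u|_{p_\epsilon^\star}$) and use positive homogeneity: $|\nabla_1 v|_1=|u|_{p^\star}^{-1}|\nabla_1 u|_1$, $|\partial_i v|_{p_i}^{p_i}=|u|_{p^\star}^{-p_i}|\partial_i u|_{p_i}^{p_i}$, and likewise with exponents $1+\epsilon$ and $p_i^\epsilon$ in the approximated case. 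Multiplying the first inequality by $|u|_{p^\star}^{p^+}$ and the second by $|u|_{p_\epsilon^\star}^{p_\epsilon^+}$ gives
\[
\mathcal{K}\,|u|_{p^\star}^{p^+}\ \le\ |u|_{p^\star}^{p^+-1}|\nabla_1 u|_1+\sum_{i=N_1+1}^N\frac{|u|_{p^\star}^{p^+-p_i}}{p_i}\,|\partial_i u|_{p_i}^{p_i},
\]
and the analogous identity with $1$ replaced by $1+\epsilon$, with $p_i$ replaced by $p_i^\epsilon$ and $p^+$ by $p_\epsilon^+$.

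To finish I would invoke the definition of $p^+$ as a supremum: $p^+\ge 1$ and $p^+\ge p_i$ for every $i\ge N_1+1$ (and similarly $p_\epsilon^+\ge 1+\epsilon$ and $p_\epsilon^+\ge p_i^\epsilon$ for all $i$), so each exponent $p^+-1$, $p^+-p_i$ (resp. $p_\epsilon^+-(1+\epsilon)$, $p_\epsilon^+-p_i^\epsilon$) is nonnegative; since $0\le|u|_{p^\star}\le 1$ (resp. $0\le|u|_{p_\epsilon^\star}\le 1$), every such power of the norm is at most $1$, and dropping these factors yields exactly the two asserted inequalities. There is no genuine obstacle here: the only place the hypotheses are actually used is this last monotonicity step, which is precisely why one needs both $|u|_{p^\star}\le 1$ and the supremum exponent $p^+$ — an exponent smaller than some $p_i$ would reverse the inequality $|u|_{p^\star}^{\,p^+-p_i}\le 1$.
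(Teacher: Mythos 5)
Your argument is correct and coincides with the paper's own (the paper only gives a hint: normalize $u/|u|_{p^\star}$ in the definition of ${\cal K}$ and use $|u|_{p^\star}^{p_i}\geq |u|_{p^\star}^{p^+}$ when $|u|_{p^\star}\leq 1$, which is exactly your homogeneity-plus-monotonicity step). Your added remarks — handling $u=0$, and that ${\cal K}$ may be computed over $BV^{\vec p}$ competitors by Proposition \ref{propBV} — are consistent with what the paper states just before the lemma.
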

                           
                            Hint of the proof : 
                            
                Use  ${ u \over |u|_{ p^\star}}$ in the definition of ${\cal K} $ and the fact that if $|u|_{ p^\star} \leq 1$,    $|u|_{ p^\star} ^{ p_i} \geq |u|_{ p^\star} ^{p^+}$.

             \begin{prop}\label{limsup}
              One has 
              $$\limsup {\cal K} _\epsilon \leq {\cal K} .$$
              As a consequence  any  sequence $(v_\epsilon)$  of extremal functions for $ {\cal K}_\epsilon$ is bounded independently on $\epsilon$, more precisely  there exists some positive constant $c$ so that, for all $\epsilon>0$, 
               $ | \nabla_1 v_\epsilon|_{1+ \epsilon}, | \partial_i v_\epsilon |_{ p_i^\epsilon} \leq c , \ { \rm for\ all}\ i\geq N_1+1$.
               
               \end{prop}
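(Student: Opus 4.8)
\textbf{Proof strategy for Proposition \ref{limsup}.}
The plan is to bound ${\cal K}_\epsilon$ from above by inserting one \emph{fixed} test function coming from a near-minimizer of ${\cal K}$, and then to let $\epsilon\to 0$ in the resulting fully explicit expression. First I record a reduction: since ${\cal D}^{1,\vec p}(\R^N)$ is by definition the closure of ${\cal D}(\R^N)$ for the norm (\ref{normp}), and since $u\mapsto |\nabla_1 u|_1+\sum_{i=N_1+1}^N \frac{1}{p_i}|\partial_i u|_{p_i}^{p_i}$ and $u\mapsto |u|_{p^\star}$ are both continuous for that norm (the latter by Theorem \ref{troisi}), one has
$${\cal K}=\inf\Big\{\, |\nabla_1 u|_1+\sum_{i=N_1+1}^N \frac{1}{p_i}|\partial_i u|_{p_i}^{p_i}\ :\ u\in{\cal D}(\R^N),\ |u|_{p^\star}=1\,\Big\},$$
(given an admissible $w$, approximate it by $\phi_n\in{\cal D}(\R^N)$ in the norm (\ref{normp}) and renormalise $\phi_n/|\phi_n|_{p^\star}$). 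So, fixing $\delta>0$, I choose once and for all $u\in{\cal D}(\R^N)$ with $|u|_{p^\star}=1$ and $|\nabla_1 u|_1+\sum_{i=N_1+1}^N \frac{1}{p_i}|\partial_i u|_{p_i}^{p_i}\leq {\cal K}+\delta$.

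Next, for each small $\epsilon>0$, since ${\cal D}(\R^N)\subset {\cal D}^{1,\vec p_\epsilon}(\R^N)$ the function $v_\epsilon:=u/|u|_{p_\epsilon^\star}$ is admissible for ${\cal K}_\epsilon$, so
$${\cal K}_\epsilon \leq \frac{1}{(1+\epsilon)\,|u|_{p_\epsilon^\star}^{1+\epsilon}}\int_{\R^N}|\nabla_1 u|^{1+\epsilon}\;+\;\sum_{i=N_1+1}^N \frac{1}{p_i^\epsilon\,|u|_{p_\epsilon^\star}^{p_i^\epsilon}}\int_{\R^N}|\partial_i u|^{p_i^\epsilon}.$$
Then I let $\epsilon\to 0$. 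From $\frac{N}{p_\epsilon^\star}=\frac{N}{p^\star}-\frac{\epsilon N}{1+\epsilon}$ one gets $p_\epsilon^\star\to p^\star$, hence $|u|_{p_\epsilon^\star}\to|u|_{p^\star}=1$ (continuity of $q\mapsto |u|_q$ for the fixed bounded, compactly supported $u$); also $\frac{1}{1+\epsilon}\to 1$ and $p_i^\epsilon=p_i(1+\epsilon_i)\to p_i$. Because $\nabla_1 u$ and the $\partial_i u$ are bounded with compact support and the exponents $1+\epsilon$, $p_i^\epsilon$ stay in a fixed compact subinterval of $[1,\infty)$ for $\epsilon$ small, dominated convergence gives $\int_{\R^N}|\nabla_1 u|^{1+\epsilon}\to\int_{\R^N}|\nabla_1 u|=|\nabla_1 u|_1$ and $\int_{\R^N}|\partial_i u|^{p_i^\epsilon}\to|\partial_i u|_{p_i}^{p_i}$. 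Combining these limits,
$$\limsup_{\epsilon\to 0}{\cal K}_\epsilon \leq |\nabla_1 u|_1+\sum_{i=N_1+1}^N \frac{1}{p_i}|\partial_i u|_{p_i}^{p_i}\leq {\cal K}+\delta,$$
and letting $\delta\to 0$ gives $\limsup_{\epsilon\to 0}{\cal K}_\epsilon\leq {\cal K}$.

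For the consequence, let $(v_\epsilon)$ be extremal for ${\cal K}_\epsilon$. For $\epsilon$ small, ${\cal K}_\epsilon\leq {\cal K}+1$, and since the two nonnegative summands of $\frac{1}{1+\epsilon}|\nabla_1 v_\epsilon|_{1+\epsilon}^{1+\epsilon}+\sum_{i=N_1+1}^N\frac{1}{p_i^\epsilon}|\partial_i v_\epsilon|_{p_i^\epsilon}^{p_i^\epsilon}={\cal K}_\epsilon$ are each $\leq {\cal K}+1$, one obtains $|\nabla_1 v_\epsilon|_{1+\epsilon}^{1+\epsilon}\leq 2({\cal K}+1)$ and $|\partial_i v_\epsilon|_{p_i^\epsilon}^{p_i^\epsilon}\leq (p^++1)({\cal K}+1)$ for $\epsilon$ small. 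Since $x^r\leq M$ with $x\geq 0$ and $r\geq 1$ imply $x\leq\max(1,M)$, all the norms $|\nabla_1 v_\epsilon|_{1+\epsilon}$ and $|\partial_i v_\epsilon|_{p_i^\epsilon}$, $i\geq N_1+1$, are bounded by a constant $c$ independent of $\epsilon$.

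The argument is soft; the only point needing care is the interchange of limit and integral, which is legitimate precisely because the competitor $u$ was taken in ${\cal D}(\R^N)$ — and this is also what sidesteps the difficulty recalled in the introduction, that ${\cal D}^{1,\vec p_\epsilon}(\R^N)$ is not a subspace of ${\cal D}^{1,\vec p}(\R^N)$: we never compare the two spaces, we merely substitute one fixed test function into both variational problems. I expect the only mild bookkeeping to be keeping track of the normalising factors $|u|_{p_\epsilon^\star}^{1+\epsilon}$, $|u|_{p_\epsilon^\star}^{p_i^\epsilon}$ and verifying that they tend to $1$.
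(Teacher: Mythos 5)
Your proof is correct and follows essentially the same route as the paper's: fix a smooth compactly supported near-minimizer for ${\cal K}$, renormalise it in $L^{p_\epsilon^\star}$ to make it admissible for ${\cal K}_\epsilon$, and pass to the limit by dominated convergence, then let the approximation parameter go to zero. The only differences are bookkeeping (you normalise exactly in $L^{p^\star}$ and track the factors $|u|_{p_\epsilon^\star}\to 1$, whereas the paper keeps a $(1-2\delta)^{-p^+}$ factor) and that you write out the boundedness consequence explicitly, which the paper leaves implicit.
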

              
               \begin{proof}
Let $\delta >0$, $\delta < {1\over 2}$ and let $u_\delta \in \mathcal{D}^{1,p}(\R^N)$, ( or $BV^{\vec p} ( \R^N)$), so that $|u_\delta|_{p^*}=1$  and 
$$ | \nabla_1 u_\delta|_1 + \sum_{ i= N_1+1}^N {1 \over p_i}  | \partial_i u_\delta|_{ p_i} ^{p_i} \leq {\cal K}  + \delta.$$

By definition of $\mathcal{D}^{1,\vec p}(\R^N)$, there exists $v_\delta \in \mathcal{D}(\R^N)$ such that  $||v_\delta|_{ p^\star } -1| \leq \delta$, 
$$| \nabla_1 v_\delta|_1 + \sum_{i= N_1+1}^N {1 \over p_i} | \partial_i v_\delta|_{ p_i} ^{p_i} \leq {\cal K} + 2\delta.$$
For $\epsilon$ small enough one has $||v_\delta|_{ { p_\epsilon^\star} } -1| \leq 2\delta$.  
By considering $w_\delta^\epsilon = {v_\delta \over |v_\delta|_{p_\epsilon^\star}}$,  one  sees that $w_\delta^\epsilon  \in \mathcal{D}(\R^N)$, $|w_\delta^\epsilon |_{ p_\epsilon ^\star} = 1$,  
 and 
 \begin{eqnarray*}
 | \nabla_1 w_\delta^\epsilon|_1 +\sum_{i=N_1+1}^N{1 \over p_i}  | \partial_i w_\delta^\epsilon|_{ p_i} ^{p_i} &\leq  &{| \nabla_1 v_\delta|_1 \over 1-2\delta} +\sum_{i=N_1+1}^N{1 \over p_i}{ | \partial_i v_\delta|_{ p_i} ^{p_i}\over (1-2\delta)^{p_i}} \\
 &\leq & {1 \over (1-2\delta)^{p^+}} ({\cal K} +2 \delta). 
 \end{eqnarray*}
 By  the Lebesgue's dominated convergence theorem,  $| \nabla_1 w_\delta^\epsilon|^{1+ \epsilon} _{1+ \epsilon} \rightarrow {| \nabla_1 v_\delta|_1\over |v_\delta|_{p^*}}$, and 
$ | \partial_i w_\delta^\epsilon|_{p_i^\epsilon}^{p_i^\epsilon} \rightarrow {| \partial_i v_\delta|_{p_i}^{p_i}\over |v_\delta|_{p^*}^{p_i}}$  for all $i \geq N_1+1$ when $\epsilon \to 0$, hence we get  $$\underset{\epsilon\to 0}{{\limsup}} {\cal K}_\epsilon \leq \underset{\epsilon\to 0}{{\limsup}}\left[{1 \over 1+\epsilon}  | \nabla_1 w_\delta^\epsilon|_{1+\epsilon}^{ 1+\epsilon} + \sum_{i=N_1+1}^N{1 \over p_i^\epsilon}| \partial_i w_\delta^\epsilon|_{ p_i^\epsilon } ^{p_i^\epsilon } \right] \leq {{\cal K} + 2\delta\over (1-2\delta)^{p^+}},$$
which concludes the proof since $\delta$ is arbitrary.

               \end{proof}

                 \begin{prop}\label{propbl}
                  Suppose that $w_\epsilon \in BV^{ \vec p}( \R^N) $ satisfies $|w_\epsilon |_{ p^\star} = 1$, and that $w_\epsilon \rightarrow v$ almost everywhere. 
                   Then  for $\epsilon$ small enough 
                   $ |w_\epsilon -v| _{p^\star} \leq 1$.
                    \end{prop}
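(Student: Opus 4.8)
The plan is to deduce this from the classical Brezis--Lieb lemma. First I would record the preliminaries: since $w_\epsilon \to v$ almost everywhere and $|w_\epsilon|_{p^\star} = 1$, Fatou's lemma gives $v \in L^{p^\star}(\R^N)$ with $|v|_{p^\star} \le 1$; in particular $w_\epsilon - v \in L^{p^\star}(\R^N)$, so the quantity in the statement is well defined. If $v = 0$ almost everywhere the conclusion is trivial, because then $|w_\epsilon - v|_{p^\star} = |w_\epsilon|_{p^\star} = 1$ for every $\epsilon$; so from now on one may assume $|v|_{p^\star} > 0$.

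Since $\epsilon$ ranges over a continuum I would argue along sequences: let $\epsilon_n \to 0^+$ be arbitrary. The sequence $(w_{\epsilon_n})$ is bounded in $L^{p^\star}(\R^N)$ — its norm equals $1$ — and converges almost everywhere to $v$, so the Brezis--Lieb lemma applies and yields
$$\lim_{n\to\infty}\bigl(|w_{\epsilon_n}|_{p^\star}^{p^\star} - |w_{\epsilon_n} - v|_{p^\star}^{p^\star}\bigr) = |v|_{p^\star}^{p^\star},$$
that is, $|w_{\epsilon_n} - v|_{p^\star}^{p^\star} = 1 - |v|_{p^\star}^{p^\star} + o(1)$ as $n \to \infty$. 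Because $|v|_{p^\star}^{p^\star} > 0$, the right-hand side is strictly less than $1$ for $n$ large; since the sequence $\epsilon_n \to 0$ was arbitrary, this gives $|w_\epsilon - v|_{p^\star} < 1$ for all $\epsilon$ small enough, which is (more than) the claim.

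The argument is essentially routine; the only points deserving care are checking that the hypotheses of the Brezis--Lieb lemma hold (boundedness in $L^{p^\star}$ is automatic since all the norms equal $1$, and $p^\star > 1$ because $1 \le p^+ < p^\star$), and separating off the degenerate case $v \equiv 0$ beforehand — there Brezis--Lieb only gives the asymptotic equality $|w_\epsilon - v|_{p^\star}^{p^\star} \to 1$, which would not by itself produce the bound $\le 1$, whereas directly one has equality for every $\epsilon$. If one prefers to avoid quoting Brezis--Lieb, the same conclusion follows from the elementary inequality $\bigl||a|^{p^\star} - |a-b|^{p^\star}\bigr| \le \delta\,|a-b|^{p^\star} + C_\delta\,|b|^{p^\star}$ combined with the generalized dominated convergence theorem, organized again as a statement about an arbitrary sequence $\epsilon_n \to 0$.
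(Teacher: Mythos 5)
Your proposal is correct and follows essentially the same route as the paper: split off the trivial case $v\equiv 0$ and apply the Brezis--Lieb lemma to conclude that $|w_\epsilon-v|_{p^\star}$ tends to a limit strictly less than $1$ when $v\neq 0$. Your write-up merely adds the routine details (Fatou's lemma for $v\in L^{p^\star}$, arguing along arbitrary sequences $\epsilon_n\to 0$) that the paper leaves implicit.
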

                     \begin{proof} 
                      If $v\equiv  0$, there is nothing to prove. If  $v\neq 0$, using Brezis Lieb Lemma, \cite{BL} one has 
                    $ |w_\epsilon -v| _{p^\star} -(  |w_\epsilon|_ {p^\star} -|v|_{ p^\star}) \rightarrow 0$
          which implies that $\limsup |w_\epsilon -v|_ {p^\star} <1$, hence the result holds. 
           This lemma will be used for $w_\epsilon = v_\epsilon ^{ \lambda_\epsilon} $ , where $v_\epsilon$ is some convenient extremal function,  given in Lemma \ref{lemveps} below, and $\lambda_\epsilon$ has been defined in ( \ref{eqlambda}). 
           \end{proof}

     \begin{lemme}\label{lemveps}
     Let $u_\epsilon$ be  a non negative extremal function for ${\cal K}_\epsilon$, so that $|u_\epsilon |_{ p_\epsilon ^\star} = 1$. There exists $v_\epsilon\geq 0$  which satisfies 
    \begin{eqnarray*}
          |u_\epsilon|_{p^*_\epsilon}&=& |v_\epsilon|_{p^*_\epsilon}=1,~
           |\nabla_1 u_\epsilon|_{1+\epsilon} = |\nabla_1 v_\epsilon|_{1+\epsilon}, \text{ and }
          |\partial_i u_\epsilon|_{p_i^\epsilon}= |\partial_i v_\epsilon|_{p_i^\epsilon}, \text{ for all } i \geq N_1+1, \nonumber\\
           &&\text{ and } \int_{ B(0, 1)} v_\epsilon ^{ p_\epsilon^\star} = {1 \over 2}  .  
     \end{eqnarray*}
 \end{lemme}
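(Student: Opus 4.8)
The plan is to build $v_\epsilon$ from $u_\epsilon$ by acting with the natural scaling group of the critical anisotropic problem, and then to fix the scaling parameter by an intermediate-value argument so that exactly half of the mass $u_\epsilon^{p_\epsilon^\star}$ falls inside $B(0,1)$. Concretely, writing $p_i^\epsilon=1+\epsilon$ for $1\le i\le N_1$ and setting $\beta_i:=\frac1{p_i^\epsilon}-\frac1{p_\epsilon^\star}$, I would introduce for $t>0$ the map
$$(T_tu)(x)=t^{1/p_\epsilon^\star}\,u\bigl(t^{\beta_1}x_1,\dots,t^{\beta_N}x_N\bigr).$$
The one algebraic identity that makes everything work is $\sum_{i=1}^N\beta_i=1$, which is nothing but the definition of the critical exponent, $\frac N{p_\epsilon^\star}=\frac{N_1}{1+\epsilon}+\sum_{i>N_1}\frac1{p_i^\epsilon}-1$; moreover $p_\epsilon^+<p_\epsilon^\star$ forces every $\beta_i$ to be strictly positive, a fact used below.

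Granting $\sum_i\beta_i=1$, a change of variables shows that the power of $t$ produced in $|T_tu|_{p_\epsilon^\star}^{p_\epsilon^\star}$, in $|\partial_i(T_tu)|_{p_i^\epsilon}^{p_i^\epsilon}$ for $i>N_1$, and in $|\nabla_1(T_tu)|_{1+\epsilon}^{1+\epsilon}$ is in each case $t^0$; hence $T_t$ leaves the whole ``norm data'' of $u$ invariant, is an isometry of ${\cal D}^{1,\vec p_\epsilon}(\R^N)$ (argue first for $u\in{\cal D}(\R^N)$ and pass to the limit), sends nonnegative functions to nonnegative functions, and --- since it also preserves $|\cdot|_{p_\epsilon^\star}$ --- maps extremal functions for ${\cal K}_\epsilon$ to extremal functions. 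Next I would choose $t$: put $\phi(t):=\int_{B(0,1)}(T_tu_\epsilon)^{p_\epsilon^\star}$, and note, by another change of variables using $\sum_i\beta_i=1$, that $\phi(t)=\int_{E_t}u_\epsilon^{p_\epsilon^\star}$, where $E_t$ is the origin-centered ellipsoid with semi-axes $t^{\beta_i}$. Because all $\beta_i>0$, the $E_t$ shrink to $\{0\}$ as $t\to0^+$ and exhaust $\R^N$ as $t\to+\infty$; since $u_\epsilon^{p_\epsilon^\star}\in L^1(\R^N)$ has total integral $1$, dominated convergence gives that $\phi$ is continuous on $(0,\infty)$ with $\phi(0^+)=0$ and $\phi(+\infty)=1$. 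By the intermediate value theorem there is $t^\star>0$ with $\phi(t^\star)=\tfrac12$, and $v_\epsilon:=T_{t^\star}u_\epsilon$ then has all the required properties: it is nonnegative, shares with $u_\epsilon$ the norms $|\cdot|_{p_\epsilon^\star}$, $|\nabla_1\cdot|_{1+\epsilon}$ and $|\partial_i\cdot|_{p_i^\epsilon}$ for $i>N_1$, and satisfies $\int_{B(0,1)}v_\epsilon^{p_\epsilon^\star}=\tfrac12$.

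I expect the only genuinely delicate point to be the exponent bookkeeping: checking that the single multiplicative normalization $t^{1/p_\epsilon^\star}$ simultaneously offsets the $N$ distinct anisotropic dilation factors $t^{\beta_i}$, both in the Lebesgue norm of $u$ and in the $N$ mixed derivative norms. As indicated, this works precisely because $\sum_i\beta_i=1$ is the defining relation for $p_\epsilon^\star$. The continuity of $\phi$ and the exhaustion property of the $E_t$ are routine, and --- notably --- no translation of $u_\epsilon$ is needed, since the ellipsoids are centered at the origin and $u_\epsilon^{p_\epsilon^\star}$ has finite mass.
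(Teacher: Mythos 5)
Your proof is correct, and it is essentially the paper's construction: up to the reparametrization $t\mapsto t^{p_\epsilon^\star}$, your map $T_t$ is exactly the dilation $u^{t,y}(x)=t\,u(t^{\alpha_1^\epsilon}(x_1-y_1),\ldots,t^{\alpha_N^\epsilon}(x_N-y_N))$ used in the paper with $y=0$, since $\alpha_i^\epsilon=\frac{p_\epsilon^\star}{p_i^\epsilon}-1$ gives $t^{\alpha_i^\epsilon}=(t^{p_\epsilon^\star})^{\beta_i}$, and the norm invariances you verify via $\sum_i\beta_i=1$ are the same ones the paper records. The only genuine difference is the selection of the parameter: the paper keeps the translation and uses the Levy concentration function $Q_\epsilon(t)=\sup_{y}\int_{E(y,t^{\alpha_1^\epsilon},\ldots,t^{\alpha_N^\epsilon})}|u_\epsilon|^{p_\epsilon^\star}$, choosing $t_\epsilon$ with $Q_\epsilon(t_\epsilon)=\tfrac12$ and a point $y_\epsilon$ attaining it, which yields in addition the normalization $\sup_y\int_{B(y,1)}v_\epsilon^{p_\epsilon^\star}=\tfrac12$ (noted "for further purpose"), whereas your translation-free intermediate-value argument on the nested origin-centered ellipsoids $E_t$ gives only $\int_{B(0,1)}v_\epsilon^{p_\epsilon^\star}=\tfrac12$ -- which is all the lemma as stated asks for. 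Your argument is complete provided you make two small points explicit: continuity of $\phi(t)=\int_{E_t}u_\epsilon^{p_\epsilon^\star}$ follows by dominated convergence because $\partial E_t$ has Lebesgue measure zero (monotonicity of the nested $E_t$ is not even needed), and the positivity of all $\beta_i$ uses that $\epsilon$ is small enough so that $p_\epsilon^+<p_\epsilon^\star$, as assumed in the paper; you also rightly observe that $T_t$ preserves extremality, which is the property the paper actually exploits afterwards.
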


     \begin{proof}

      This proof is as in \cite{HR2}, but we reproduce it here for the reader's convenience. 
       Let $\alpha_i^\epsilon={p^*_\epsilon\over p_i^\epsilon}-1, i=1,\cdots,N$. For every $y=(y_1,\cdots,y_N)\in \R^N$,  and for any $u\in \mathcal{D}^{1,\vec{p_\epsilon}}(\R^N)$, and $t>0$, we set 
$$u^{t,y}(x)= t u(t^{\alpha_1^\epsilon}(x_1-y_1),\cdots, t^{\alpha_N^\epsilon} (x_N-y_N)).$$
Then, we have
$$|u|_{p^*_\epsilon}= |u^{t,y}|_{p^*_\epsilon},$$
$$|\partial_i u|_{p_i^\epsilon}= |\partial_i u^{t,y}|_{p_i^\epsilon}, \text{ for all }1\leq i \leq N,$$
$$|\nabla_1 u|_{1+\epsilon} = |\nabla_1 u^{t,y}|_{1+\epsilon}.$$
 
Let $u_\epsilon$ be an extremal  function for ${\cal K}_\epsilon$ so that $|u_\epsilon|_{ p_\epsilon^\star} = 1$. 
As in \cite{HR2}, \cite{PL2}, we recall  the  definition of the Levy concentration function, for $t>0 :$
$$Q_\epsilon(t) = \underset{y\in \R^N}{\sup} \int_{E(y,t^{\alpha_1^\epsilon},\cdots,t^{\alpha_N^\epsilon})} |u_\epsilon|^{p^*_\epsilon},$$
where $E(y,t^{\alpha_1^\epsilon},\cdots,t^{\alpha_N^\epsilon})$ is the ellipse defined by
$$\{z=(z_1,\cdots,z_N)\in \R^N, \sum_{i=1}^N {(z_i-y_i)^2\over t^{2\alpha_i^\epsilon}} \leq 1 \},$$
with $y=(y_1,\cdots,y_N)$, and $\alpha_i^\epsilon={p^*_\epsilon \over p_i^\epsilon}-1$ for all $i$. 
     
     Since for every $\epsilon>0$, $\underset{t\to 0}{\lim} Q_\epsilon(t)=0$, and $\underset{t \to \infty}{\lim} Q_\epsilon(t)= 1$, there exists $t_\epsilon >0$ such that $Q_\epsilon(t_\epsilon)={1\over 2}$, and  there exists $y_\epsilon\in \R^N$ such that
     $$ \int_{E(y_\epsilon,t_\epsilon^{\alpha_1^\epsilon},\cdots,t_\epsilon^{\alpha_N^\epsilon})} |u_\epsilon|^{p^*_\epsilon}(x) dx = {1\over 2}.$$
     
     Thus, by a change of variable one has for $v_\epsilon= u_\epsilon^{t_\epsilon, y_\epsilon}$ : 
     $$\int_{B(0,1)} |v_\epsilon|^{p^*_\epsilon}= {1\over 2} = \underset{y\in \R^N}{\sup} \int_{B(y,1)} |v_\epsilon|^{p_\epsilon^*}.$$ 
     
      Note for further purpose that $v_\epsilon$ is also extremal for ${\cal K}_\epsilon$.  
      
            \end{proof}
            
        \begin{prop}
        Let  $v_\epsilon\geq 0$  be  in ${ \cal D}^{1, \vec p_\epsilon}( \R^N)$, bounded in that space, independently on $\epsilon$.   Then for  $\lambda_\epsilon$ defined in (\ref{eqlambda}), the sequence 
         $w_\epsilon = v_\epsilon^{ \lambda_\epsilon}$ is bounded in ${ \cal D}^{1, \vec p}( \R^N)$. 
          \end{prop}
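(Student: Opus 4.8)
The plan is to bound, uniformly in $\epsilon$, the three quantities making up the $\mathcal{D}^{1,\vec p}$-norm of $w_\epsilon$, namely $|w_\epsilon|_{p^\star}$, $|\nabla_1 w_\epsilon|_1$ and $|\partial_i w_\epsilon|_{p_i}$ for $N_1+1\le i\le N$; by the Remark following Proposition~\ref{propBV}, a uniform bound on these three is exactly the assertion. Throughout I will use that $\lambda_\epsilon\to 1$ as $\epsilon\to 0$ (so the factors $\lambda_\epsilon$, $\lambda_\epsilon^{p_i}$ below stay bounded), that $\lambda_\epsilon p^\star=p_\epsilon^\star$, and that $|v_\epsilon|_{p_\epsilon^\star}$ is bounded uniformly in $\epsilon$ (it equals $1$ in the applications, and in general it is controlled by Troisi's inequality of Theorem~\ref{troisi} since $v_\epsilon$ is bounded in $\mathcal{D}^{1,\vec p_\epsilon}(\R^N)$, the constant $T_0^\epsilon$ staying bounded below for small $\epsilon$).

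First I would set $w_\epsilon:=v_\epsilon^{\lambda_\epsilon}$ and note $w_\epsilon\in L^{p^\star}(\R^N)$, since $\int_{\R^N}w_\epsilon^{p^\star}=\int_{\R^N}v_\epsilon^{\lambda_\epsilon p^\star}=\int_{\R^N}v_\epsilon^{p_\epsilon^\star}=|v_\epsilon|_{p_\epsilon^\star}^{p_\epsilon^\star}<\infty$. To handle the derivatives I would avoid composing $v_\epsilon$ directly with $t\mapsto t^{\lambda_\epsilon}$ (which is convex and $C^1$ on $[0,\infty)$, with derivative vanishing at $0$, but \emph{not} Lipschitz there) and instead use the truncated functions $G_k$ coinciding with $t^{\lambda_\epsilon}$ on $[0,k]$ and affine on $[k,\infty)$: these are Lipschitz, vanish at $0$, and by convexity satisfy $0\le G_k(t)\le t^{\lambda_\epsilon}$, $0\le G_k'(t)\le\lambda_\epsilon t^{\lambda_\epsilon-1}$, with $G_k\uparrow t^{\lambda_\epsilon}$. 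Since $v_\epsilon\in W^{1,1}_{loc}(\R^N)$, the chain rule for Lipschitz maps of Sobolev functions gives $\partial_i G_k(v_\epsilon)=G_k'(v_\epsilon)\partial_i v_\epsilon$, and $0\le G_k(v_\epsilon)\le w_\epsilon\in L^{p^\star}$, so $G_k(v_\epsilon)\in\mathcal{D}^{1,\vec p}(\R^N)$. The Hölder estimates below, applied with $\lambda_\epsilon v_\epsilon^{\lambda_\epsilon-1}$ in place of $G_k'(v_\epsilon)$ (legitimate by the pointwise bound), show that $\lambda_\epsilon v_\epsilon^{\lambda_\epsilon-1}|\nabla_1 v_\epsilon|\in L^1(\R^N)$ and $\lambda_\epsilon v_\epsilon^{\lambda_\epsilon-1}|\partial_i v_\epsilon|\in L^{p_i}(\R^N)$ for $i\ge N_1+1$, with norms bounded uniformly in $\epsilon$ (hence the norms of $G_k(v_\epsilon)$ are bounded uniformly in $k$ and $\epsilon$). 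Letting $k\to\infty$ and using dominated convergence — with dominating functions $w_\epsilon\in L^{p^\star}$ for $G_k(v_\epsilon)$, and $\lambda_\epsilon v_\epsilon^{\lambda_\epsilon-1}|\partial_i v_\epsilon|$ for the derivatives — one gets $G_k(v_\epsilon)\to w_\epsilon$ in $L^{p^\star}$ and $\partial_i G_k(v_\epsilon)\to\lambda_\epsilon v_\epsilon^{\lambda_\epsilon-1}\partial_i v_\epsilon$ in $L^1$, resp. $L^{p_i}$; therefore $w_\epsilon\in\mathcal{D}^{1,\vec p}(\R^N)$, $\partial_i w_\epsilon=\lambda_\epsilon v_\epsilon^{\lambda_\epsilon-1}\partial_i v_\epsilon$, and the norm of $w_\epsilon$ is bounded uniformly in $\epsilon$ once the estimates are established.

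The estimates are pure Hölder. For $1\le i\le N_1$ one has $|\nabla_1 w_\epsilon|=\lambda_\epsilon v_\epsilon^{\lambda_\epsilon-1}|\nabla_1 v_\epsilon|$, and with exponents $1+\epsilon$ and $\tfrac{1+\epsilon}{\epsilon}$,
$$\int_{\R^N}v_\epsilon^{\lambda_\epsilon-1}|\nabla_1 v_\epsilon|\ \le\ \Big(\int_{\R^N}v_\epsilon^{(\lambda_\epsilon-1)\frac{1+\epsilon}{\epsilon}}\Big)^{\frac{\epsilon}{1+\epsilon}}\Big(\int_{\R^N}|\nabla_1 v_\epsilon|^{1+\epsilon}\Big)^{\frac{1}{1+\epsilon}};$$
since $\lambda_\epsilon-1=\tfrac{p_\epsilon^\star\epsilon}{1+\epsilon}$, the exponent in the first factor is $(\lambda_\epsilon-1)\tfrac{1+\epsilon}{\epsilon}=p_\epsilon^\star$, so the right-hand side equals $|v_\epsilon|_{p_\epsilon^\star}^{p_\epsilon^\star\epsilon/(1+\epsilon)}\,|\nabla_1 v_\epsilon|_{1+\epsilon}$, which is bounded. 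For $i\ge N_1+1$ one has $|\partial_i w_\epsilon|^{p_i}=\lambda_\epsilon^{p_i}v_\epsilon^{(\lambda_\epsilon-1)p_i}|\partial_i v_\epsilon|^{p_i}$, and with exponents $1+\epsilon_i$ and $\tfrac{1+\epsilon_i}{\epsilon_i}$ (recall $p_i^\epsilon=p_i(1+\epsilon_i)$),
$$\int_{\R^N}v_\epsilon^{(\lambda_\epsilon-1)p_i}|\partial_i v_\epsilon|^{p_i}\ \le\ \Big(\int_{\R^N}v_\epsilon^{(\lambda_\epsilon-1)p_i\frac{1+\epsilon_i}{\epsilon_i}}\Big)^{\frac{\epsilon_i}{1+\epsilon_i}}\Big(\int_{\R^N}|\partial_i v_\epsilon|^{p_i^\epsilon}\Big)^{\frac{1}{1+\epsilon_i}};$$
using $p_i\tfrac{1+\epsilon_i}{\epsilon_i}=\tfrac{p_i^\epsilon}{\epsilon_i}=\tfrac{1+\epsilon}{\epsilon}$ one finds $(\lambda_\epsilon-1)p_i\tfrac{1+\epsilon_i}{\epsilon_i}=\tfrac{p_\epsilon^\star\epsilon}{1+\epsilon}\cdot\tfrac{1+\epsilon}{\epsilon}=p_\epsilon^\star$, so the first factor is $|v_\epsilon|_{p_\epsilon^\star}^{p_\epsilon^\star\epsilon_i/(1+\epsilon_i)}$ and the whole bound is again uniform in $\epsilon$. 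Together with the $L^{p^\star}$ identity of the second paragraph and the boundedness of $\lambda_\epsilon$, this yields the claim.

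I expect the only genuinely delicate point to be the first step — making rigorous that $v_\epsilon^{\lambda_\epsilon}$ really lies in $\mathcal{D}^{1,\vec p}(\R^N)$ and that its distributional derivatives are $\lambda_\epsilon v_\epsilon^{\lambda_\epsilon-1}\partial_i v_\epsilon$ — precisely because $t\mapsto t^{\lambda_\epsilon}$ fails to be Lipschitz on $[0,\infty)$ and a naive Sobolev chain-rule lemma does not apply; the convex truncation $G_k$ together with the uniform Hölder bounds above is the device that resolves it, and those same bounds are then exactly what delivers the desired uniform control of $w_\epsilon$ in $\mathcal{D}^{1,\vec p}(\R^N)$.
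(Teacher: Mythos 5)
Your proof is correct and follows essentially the same route as the paper: the paper's argument consists precisely of the two H\"older estimates you write, with exponents $1+\epsilon,\ \tfrac{1+\epsilon}{\epsilon}$ and $1+\epsilon_i,\ \tfrac{1+\epsilon_i}{\epsilon_i}$, combined with the identities $(\lambda_\epsilon-1)\tfrac{1+\epsilon}{\epsilon}=(\lambda_\epsilon-1)p_i\tfrac{1+\epsilon_i}{\epsilon_i}=p_\epsilon^\star$ and the boundedness of $\lambda_\epsilon$. The additional care you take in justifying the chain rule $\partial_i(v_\epsilon^{\lambda_\epsilon})=\lambda_\epsilon v_\epsilon^{\lambda_\epsilon-1}\partial_i v_\epsilon$ through Lipschitz truncations, and in controlling $|v_\epsilon|_{p_\epsilon^\star}$, is extra rigor that the paper simply takes for granted.
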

          
           \begin{proof}
           One has 
           \begin{eqnarray*}
           \int _{\R^N} | \nabla_1 (v_\epsilon^{ \lambda_\epsilon})|&=&\lambda_\epsilon  \int_{\R^N}  v_\epsilon^{ \lambda_\epsilon-1}| \nabla_1 v_\epsilon|\\
           &\leq &\lambda_\epsilon(\int_{\R^N}  | \nabla_1 v_\epsilon|^{1+ \epsilon} )^{1 \over 1+ \epsilon} ( \int_{\R^N}  v_\epsilon^{ (\lambda_\epsilon-1)(1+ \epsilon) \over \epsilon})^{\epsilon \over 1+ \epsilon} \\
           &=&\lambda_\epsilon (\int _{\R^N}  |\nabla_1 v_\epsilon|^{1+ \epsilon} )^{1 \over 1+ \epsilon} ( \int_{\R^N}  v_\epsilon^{ p_\epsilon^\star})^{\epsilon \over 1+ \epsilon} 
           \end{eqnarray*}
       and 
                    for all $i > N_1$, using the definition  in (\ref{defai})
             \begin{eqnarray*}
           \int_{\R^N}   | \partial_i (v_\epsilon^{ \lambda_\epsilon})|^{p_i} &=& \lambda_\epsilon^{p_i}\int_{\R^N}  v_\epsilon^{( \lambda_\epsilon-1)p_i}| \partial_i  v_\epsilon|^{p_i} \\
           &\leq &\lambda_\epsilon^{p_i}(\int_{\R^N} | \partial_i v_\epsilon|^{p_i^\epsilon})^{1 \over 1+ \epsilon_i} ( \int_{\R^N}  v_\epsilon^{ (\lambda_\epsilon-1)p_i^\epsilon \over \epsilon_i})^{\epsilon_i \over 1+ \epsilon_i} \\
           &=&\lambda_\epsilon^{p_i}  (\int_{\R^N} | \partial_i v_\epsilon|^{p_i^\epsilon} )^{1 \over 1+ \epsilon_i}  ( \int_{\R^N}  v_\epsilon^{ p_\epsilon^\star})^{\epsilon_i \over 1+ \epsilon_i} 
           \end{eqnarray*}
          Then $  \int_{\R^N}   | \nabla_1 (v_\epsilon^{ \lambda_\epsilon})|$ and $ \int_{\R^N}   | \partial_i (v_\epsilon^{ \lambda_\epsilon})|^{p_i} $ for $i \geq N_1+1$ are  bounded independently on $\epsilon$, by the assumptions.            
            \end{proof}

                      \bigskip

                 Let $v_\epsilon$ be given by Lemma \ref{lemveps}.
                                     One has by the definition of $\lambda_\epsilon$, 
                  $$\int_{\R^N}  |v_\epsilon |^{p_\epsilon^\star} = 1= \int_{\R^N}  |v_\epsilon ^{ \lambda_\epsilon} |^{ p^\star}.$$
                   Let us define 
                 $$
                    \lim_{ R \rightarrow +\infty} \limsup_{ \epsilon \rightarrow 0}  \int_{ |x|> R} |v_\epsilon^{\lambda_\epsilon}|^{ p^\star} = \nu_\infty,
                 $$
                      and 
                  $$
                       \lim_{ R \rightarrow +\infty} \limsup_{ \epsilon \rightarrow 0}  \int_{ |x|> R} \left( | \nabla_1 (v_\epsilon^{ \lambda_\epsilon }) | + \sum_{i=N_1+1}^N{1 \over p_i}| \partial_i (v_\epsilon^{ \lambda_\epsilon}) |^{p_i} \right)=  \mu_\infty
                      $$
                       while 
                         $$
\lim_{ R \rightarrow +\infty} \limsup_{ \epsilon \rightarrow 0} \int_{ |x|> R} \left( {1\over 1+ \epsilon} | \nabla_1 v_\epsilon |^{1+ \epsilon}  + \sum_{i=N_1+1}^N{1 \over p_i^\epsilon}| \partial_i v_\epsilon |^{p_i^\epsilon} \right)=\tilde  \mu_\infty, 
$$
 \begin{rema}\label{rema1} Note that since $\int_{ B(0, 1)} |v_\epsilon^{ \lambda_\epsilon} |^{ p ^\star} = {1\over 2}$,\ and $\int _{\R^N}  |v_\epsilon^{ \lambda_\epsilon} |^{ p ^\star} = 1$, \  $\nu_\infty \leq {1\over 2}$.
 \end{rema}
\begin{theo}\label{thcomp}
 Let $v_\epsilon \in { \cal D}^{1, \vec p_\epsilon}( \R^N) $,  be given by Lemma \ref{lemveps}, and  $\lambda_\epsilon$  be defined in (\ref{eqlambda}).  There exist positive  bounded measures on $\R^N$ :   $\tau, \tilde \tau,\mu^i,  \tilde \mu^i,$  for $  N_1+1 \leq i \leq N, $ and $\nu$,  a sequence of points $x_j\in \R^N$,   and some positif reals $ \nu_j, \mu_j^i ,   \tau_j, \tilde \tau_j, j \in \N$,  so that
  for a subsequence 
                              \begin{enumerate}
                   \item $v_\epsilon,$ and $v_\epsilon ^{ \lambda_\epsilon}$ converge both to $v$, almost everywhere and strongly   in every $L^q_{loc}$, $q < p^\star$, and $v \in BV^{ \vec p} ( \R^N)$. 
                   \item $| \nabla_1 ( v_\epsilon ^{ \lambda_\epsilon})| \rightharpoonup  | \nabla_1 v| +   \tau $,  $| \nabla_1  v_\epsilon  |^{1+ \epsilon}  \rightharpoonup  | \nabla_1 v| + \tilde  \tau $ , with $\tilde \tau \geq  \tau$,  in $M^1( \R^N)$ weakly.  
                   \item $| \partial_i v_\epsilon ^{ \lambda_\epsilon}|^{p_i} \rightharpoonup  | \partial_i v |^{p_i} + \mu^i $ for all $i \geq N_1+1$,  $| \partial_i   v_\epsilon  |^{p_i^\epsilon} \rightharpoonup  | \partial_i v |^{p_i} + \tilde \mu^i  $ with $\tilde \mu^i \geq  \mu^i $,   
in $M^1( \R^N)$ weakly. 
                  
                                      \item $ |v_\epsilon ^{ \lambda_\epsilon }|^{p^\star}=| v_\epsilon| ^{p_\epsilon^\star }\rightharpoonup |v|^{ p^\star} + \nu:=   |v|^{ p^\star} + \sum_j \nu_j \delta_{x_j}$  in $M^1( \R^N)$ weakly. 
                   
                   \item  One has  $\tau \geq \sum_j \tau_j \delta_{ x_j}$, $\mu^i \geq \sum_j \mu_j^i \delta_{ x_j}$,  for all $ i \geq N_1+1$,  and for any  $j\in \N$,  $\nu_j^{ p^+ \over p^\star } \leq {1 \over {\cal K} } (  \tau_j + \sum_i {1 \over p_i} {\mu_j^i}  )$, and  $ \nu_\infty ^{ p^+\over p^\star}\leq {1 \over {\cal K} }  \mu_\infty$. 
                   
                         \item  \begin{eqnarray*}  | \nabla_1 ( v_\epsilon ^{ \lambda_\epsilon})|_1 &+& \sum _{i=N_1+1}^N {1 \over p_i}    | \partial_i v_\epsilon ^{ \lambda_\epsilon}|^{p_i}_{p_i} \rightarrow 
                      \int_{ \R^N} | \nabla_ 1 v | + \sum_{i=N_1+1}^N {1 \over p_i} \int_{ \R^N}  | \partial_i v |^{p_i} \\
                      &+& \int_{ \R^N} \left[\tau + \sum_{i=N_1+1}^N{1 \over p_i}  \mu^i \right]+ \mu_\infty.
                       \end{eqnarray*}
                     \item \begin{eqnarray*}
                      {1\over 1+ \epsilon} | \nabla_1  v_\epsilon  |_{1+ \epsilon}^{1+ \epsilon} &+& \sum_{i=N_1+1}^N {1 \over p_i^\epsilon} | \partial_i   v_\epsilon  |_{ p_i^\epsilon} ^{p_i^\epsilon}\rightarrow   \int_{ \R^N}  | \nabla_ 1 v | + \sum_{i=N_1+1}^N {1 \over p_i} \int | \partial_i v |^{p_i} \\
                      &+& \int_{ \R^N} \left[ \tilde \tau 
                      + \sum_{i=N_1+1}^N{1 \over p_i} \tilde  \mu^i \right]+ \tilde \mu_\infty. 
                      \end{eqnarray*}
                       \item $$\int_{ \R^N}  |v_\epsilon| ^{ p_\epsilon ^\star} =1= \int_{ \R^N}   |v_\epsilon| ^{ \lambda_\epsilon p^\star} \rightarrow \int_{ \R^N}  |v|^{ p^\star} + \int_{ \R^N}  \nu+ \nu_\infty. $$
                               
             \end{enumerate}
              \end{theo}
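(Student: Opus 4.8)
The plan is to run P.L.~Lions' concentration--compactness analysis (\cite{PL1,PL2}, in the anisotropic form of \cite{HR1,HR2}) on the sequence $w_\epsilon:=v_\epsilon^{\lambda_\epsilon}$, which by the Proposition showing that $v_\epsilon^{\lambda_\epsilon}$ is bounded in ${\cal D}^{1,\vec p}(\R^N)$ is bounded in that space, using throughout that $\lambda_\epsilon\to1$, $p_i^\epsilon\to p_i$ and $p_\epsilon^\star\to p^\star$ as $\epsilon\to0$. Being bounded in ${\cal D}^{1,\vec p}(\R^N)$, $w_\epsilon$ is bounded in $W^{1,1}_{loc}(\R^N)$ and in $L^{p^\star}(\R^N)$, so after a diagonal extraction, by Rellich's theorem, $w_\epsilon\to v$ in $L^1_{loc}$ and a.e., $w_\epsilon\rightharpoonup v$ in $L^{p^\star}$, hence (uniform integrability) $w_\epsilon\to v$ strongly in every $L^q_{loc}$, $q<p^\star$; the weak limits of $\partial_i w_\epsilon$ (in $L^{p_i}$ for $i>N_1$, vaguely in $M^1$ for $i\le N_1$) together with Proposition~\ref{propBV} give $v\in BV^{\vec p}(\R^N)$. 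Since $t\mapsto t^{1/\lambda_\epsilon}$ converges uniformly to the identity on compact subsets of $[0,\infty)$ and $v$ is a.e.\ finite, $v_\epsilon=w_\epsilon^{1/\lambda_\epsilon}\to v$ a.e.; because $|v_\epsilon|_{p_\epsilon^\star}=1$ with $p_\epsilon^\star>p^\star$, $(v_\epsilon)$ is also bounded in $L^{p^\star}_{loc}$ and converges to $v$ in every $L^q_{loc}$, $q<p^\star$. This is assertion~1.

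By Proposition~\ref{limsup} and the Proposition just quoted, the positive measures $|\nabla_1 w_\epsilon|$, $|\nabla_1 v_\epsilon|^{1+\epsilon}$, $|\partial_i w_\epsilon|^{p_i}$, $|\partial_i v_\epsilon|^{p_i^\epsilon}$ and $|w_\epsilon|^{p^\star}=|v_\epsilon|^{p_\epsilon^\star}$ all have total mass bounded independently of $\epsilon$, so after a further extraction they converge vaguely in $M^1(\R^N)$ to positive bounded measures $\tau',\tilde\tau',\mu^{i\prime},\tilde\mu^{i\prime},\nu'$. Weak lower semicontinuity of the norms along the convergences just obtained (convexity for the terms $i>N_1$, definition of the total variation for $i\le N_1$, and for the $\epsilon$-powers an application of H\"older's inequality in the exponents before letting $\epsilon\to0$) shows that each vague limit dominates the natural absolutely continuous part, and Brezis--Lieb gives $\nu'\ge|v|^{p^\star}$; one then sets $\tau:=\tau'-|\nabla_1 v|$, $\tilde\tau:=\tilde\tau'-|\nabla_1 v|$, $\mu^i:=\mu^{i\prime}-|\partial_i v|^{p_i}$, $\tilde\mu^i:=\tilde\mu^{i\prime}-|\partial_i v|^{p_i}$ and $\nu:=\nu'-|v|^{p^\star}$, which are positive. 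Testing against a ball $B$ which is a continuity set of all these limits, using the H\"older estimates already written in the proof of that Proposition --- for instance $\int_B|\nabla_1 w_\epsilon|\le\lambda_\epsilon\big(\int_B|\nabla_1 v_\epsilon|^{1+\epsilon}\big)^{1/(1+\epsilon)}\big(\int_B v_\epsilon^{p_\epsilon^\star}\big)^{\epsilon/(1+\epsilon)}$ together with $\int_B v_\epsilon^{p_\epsilon^\star}\le1$, and its analogue for $\partial_i$ --- and letting $\epsilon\to0$ gives $\tau'\le\tilde\tau'$ and $\mu^{i\prime}\le\tilde\mu^{i\prime}$, hence $\tau\le\tilde\tau$ and $\mu^i\le\tilde\mu^i$. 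This proves assertions 2, 3 and 4 apart from the atomic structure of $\nu$.

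The core of the argument, and the point I expect to be the main obstacle, is assertion~5: Lions' first concentration lemma has to be re-derived for an energy which is neither $1$-homogeneous nor isotropic and whose exponents depend on $\epsilon$. For $\varphi\in{\cal D}(\R^N)$ with $0\le\varphi\le1$ one applies Lemma~\ref{lem1} to $\varphi w_\epsilon$ (legitimate since $|\varphi w_\epsilon|_{p^\star}\le1$), expands $\partial_i(\varphi w_\epsilon)=\varphi\,\partial_i w_\epsilon+w_\epsilon\,\partial_i\varphi$ and $\nabla_1(\varphi w_\epsilon)=\varphi\,\nabla_1 w_\epsilon+w_\epsilon\,\nabla_1\varphi$, and lets $\epsilon\to0$: since $p_i<p^\star$, the cross terms $w_\epsilon\,\partial_i\varphi$ and $w_\epsilon\,\nabla_1\varphi$ converge strongly (in $L^{p_i}$, resp.\ $L^1$) by the first paragraph, the leading terms converge to $\int\varphi^{p_i}d\mu^{i\prime}$, resp.\ $\int\varphi\,d\tau'$, and $\int|\varphi w_\epsilon|^{p^\star}\to\int\varphi^{p^\star}d\nu'$; this produces a reverse Sobolev inequality bounding $\int\varphi^{p^\star}d\nu'$ by $\int\varphi\,d\tau'+\sum_i\frac1{p_i}\int\varphi^{p_i}d\mu^{i\prime}$ up to terms involving only $v$ and the absolutely continuous parts. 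Localizing $\varphi$ near a point with the anisotropic rescaling $x_i\mapsto x_i/t^{\alpha_i}$, $\alpha_i=p^\star/p_i-1$, used in Lemma~\ref{lemveps}, and letting the localization parameter go to $0$, makes those remainder terms vanish, which forces $\nu$ to be a countable sum $\sum_j\nu_j\delta_{x_j}$, together with $\tau\ge\sum_j\tau_j\delta_{x_j}$, $\mu^i\ge\sum_j\mu^i_j\delta_{x_j}$ (where $\tau_j:=\tau(\{x_j\})$, $\mu^i_j:=\mu^i(\{x_j\})$) and $\nu_j^{p^+/p^\star}\le\frac1{\cal K}\big(\tau_j+\sum_i\frac1{p_i}\mu^i_j\big)$; running the same computation with cut-offs vanishing on $B(0,R)$ and equal to $1$ outside $B(0,2R)$ and letting $R\to\infty$ yields $\nu_\infty^{p^+/p^\star}\le\frac1{\cal K}\mu_\infty$. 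The delicate points are precisely that the Leibniz cross terms are genuinely negligible after the anisotropic localization and that the $\epsilon\to0$ limit commutes with that localization.

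Finally, assertions 6, 7 and 8 are the mass-accounting identities, expressing that no mass is lost except possibly at infinity. After one more extraction we may assume ${\cal K}_\epsilon$ converges; since $v_\epsilon$ is extremal, $\frac1{1+\epsilon}\int_{\R^N}|\nabla_1 v_\epsilon|^{1+\epsilon}+\sum_i\frac1{p_i^\epsilon}\int_{\R^N}|\partial_i v_\epsilon|^{p_i^\epsilon}={\cal K}_\epsilon$ and $\int_{\R^N}|v_\epsilon|^{p_\epsilon^\star}=1$. Writing each integral as $\int_{B(0,R)}+\int_{B(0,R)^c}$, letting $\epsilon\to0$ at a radius $R$ that is a continuity radius of all the limit measures (so the balls contribute the vague limits' mass there, using also $\frac1{1+\epsilon}\to1$ and $\frac1{p_i^\epsilon}\to\frac1{p_i}$) and then $R\to\infty$, the complements contribute, by the very definitions preceding the statement, $\tilde\mu_\infty$ for the $v_\epsilon$-energy and $\nu_\infty$ for $|v_\epsilon|^{p_\epsilon^\star}$, while the same splitting applied to $|\nabla_1 w_\epsilon|+\sum_i\frac1{p_i}|\partial_i w_\epsilon|^{p_i}$ gives $\mu_\infty$; this yields assertions 6, 7 and 8 and completes the proof.
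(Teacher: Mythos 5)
Your outline reproduces the paper's strategy for items 1--3 and 6--8, and your localized estimate is essentially the paper's Claim 2; but there is a genuine gap at the heart of item 4: nothing in your argument proves that $\nu$ is \emph{purely atomic}. Applying Lemma \ref{lem1} to $\varphi w_\epsilon$ and then shrinking the support of $\varphi$ to a point only gives information about the masses $\nu(\{x\})$, namely the inequality ${\cal K}\,\nu_j^{p^+/p^\star}\leq \tau_j+\sum_i\frac{1}{p_i}\mu_j^i$ at atoms; it cannot exclude a diffuse (non-atomic, or even absolutely continuous) component of $\nu$, because for such a component the localized inequality becomes vacuous in the limit. To obtain $\nu=\sum_j\nu_j\delta_{x_j}$ the paper argues differently: it applies the Troisi inequality \eqref{eqemb} to $h_\epsilon\varphi$ with $h_\epsilon=v_\epsilon^{\lambda_\epsilon}-v$, uses the Brezis--Lieb lemma to identify $\lim |h_\epsilon|^{p^\star}=\nu$ exactly (no $|v|^{p^\star}$ part on the left), and deduces a reverse H\"older inequality valid for \emph{every} $\varphi\in{\cal C}_c(\R^N)$, of the form $\left(\int|\varphi|^{p^\star}d\nu\right)^{1/p^\star}\leq C\left(\int|\varphi|^{p^+}d\mu\right)^{1/p^+}$ with $p^+<p^\star$ and $\mu$ a fixed bounded measure (Claim 1, inequality \eqref{inmesure}). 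This yields $\nu(E)\leq c\,\mu(E)^{p^\star/p^+}$ for all Borel sets $E$, and a density-point argument then shows that $\nu$ is carried by the (countably many) atoms of $\mu$. Working with the difference $h_\epsilon$ so that the left-hand side is $\nu$ alone, plus this measure-theoretic absolute-continuity step, is precisely the missing idea: in your version the Leibniz terms involving $v$ do not disappear for a general test function, so you never obtain a global reverse H\"older inequality for $\nu$ itself.

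A secondary weakness concerns $\nu_\infty^{p^+/p^\star}\leq \mu_\infty/{\cal K}$: you test with $w_\epsilon\psi_R$, whose cross terms converge, as $\epsilon\to0$, to quantities like $\int_{R<|x|<R+1}|v|^{p_i}|\partial_i\psi_R|^{p_i}$; since $v$ is only known to belong to $L^{p^\star}$ and the annulus has volume of order $R^{N-1}$, these need not tend to $0$ as $R\to\infty$ for small $p_i$. The paper avoids this by testing with $h_\epsilon\psi_R$ (admissible in Lemma \ref{lem1} because $|h_\epsilon\psi_R|_{p^\star}\leq 1$ by Proposition \ref{propbl}): then the cross terms vanish for each fixed $R$ already as $\epsilon\to0$, by the strong convergence $h_\epsilon\to0$ in $L^{p_i}$ of the compact annulus, and only afterwards does one let $R\to\infty$.
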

              \begin{proof}
              1 The convergence of $v_\epsilon ^{ \lambda_\epsilon}$ is clear  by  using the compactness  of the embedding from $BV^{\vec p}$ in $L^q$ with $q < p^\star < p_\epsilon^\star$, on bounded sets of $\R^N$,   the analogous for $v_\epsilon$ is also true since $q < \liminf p_\epsilon^\star$.  
              
            Let us prove the existence of  $\tilde \tau,\tau, \mu^i ,  \tilde \mu^i,  N_1+1 \leq i \leq N,$ and $ \nu$.  Indeed 
                 one has  by extracting a subsequence  the existence of $\tilde\tau$, since we know that $| \nabla_1 v | \leq \liminf | \nabla_1 v_\epsilon |^{1+ \epsilon}$. The existence of $\tau$  is obtained  from the same arguments. Furthermore, by H\"older's inequaltiy 
                  \begin{eqnarray*}
                \int   | \nabla_1( v_\epsilon^{\lambda_\epsilon} ) |\varphi &\leq &\lambda_\epsilon ( \int | \nabla_1 v_\epsilon |^{1+ \epsilon}  \varphi)^{ 1 \over 1+ \epsilon }   ( \int v_\epsilon^{ p_\epsilon^\star}\varphi)^{ \epsilon \over 1+ \epsilon}.
                \end{eqnarray*}
                Letting $\epsilon$ go to zero, since $\lambda_\epsilon $ goes to $1$, one gets that $\tilde \tau \geq \tau$. 
                 We argue in the same manner  to prove the analogous results for $ |\partial_i (  v_\epsilon^{\lambda_\epsilon} )|^{ p_i}$ and  $| \partial_i   v_\epsilon  |^{p_i^\epsilon}$. The existence of $\nu$ is clear.

             We prove in the lines which follow that $\nu$ is purely atomic.  This is classical, but we reproduce the proof for the convenience of the reader.  
             Let 
            $$ \mu= 2| \nabla_1 v| + \tau + \sum_{i=N_1+1}^N{2^{p_i-1}  \over p_i}  (   \mu^i+2 | \partial_i v|^{p_i}) $$                                  
      
      {\bf Claim 1}     
  For all $\varphi \in { \cal C}_c ( \R^N)$, 
  
\begin{equation} \label{inmesure}
(\int | \varphi |^{p^\star} d\nu )^{1 \over p^\star} \leq (p^+)^{{1 \over N}+ {1 \over p^\star}} ( \int \mu)  ^{{1\over N}+ {1 \over p^\star}-{1\over p^+}} {1 \over T_o}( \int | \varphi |^{p^+}d \mu)^{1 \over p^+}
\end{equation}

To prove  {\bf Claim 1},   let us define  $h_\epsilon  =( v_\epsilon^{ \lambda_\epsilon}  -v) $.   Using  (\ref{eqemb}), 
     \begin{equation}\label{passingtothelim}
      ( \int |h_\epsilon  \varphi |^{p^\star } )^{1 \over p^\star} \leq {1 \over T_o}\Pi_1^N(  \int | \partial_i ( h_\epsilon  \varphi ) |^{p_i})^{1 \over N p_i} .
     \end{equation}

We  have defined $\nu$ and $\mu^i $ by the following  vague convergences :   $v_\epsilon^{\lambda_\epsilon p^\star} \rightharpoonup v^{p^\star} + \nu $, $|\partial_i v_\epsilon^{\lambda_\epsilon}|^{p_i} \rightharpoonup |\partial_i v |^{p_i} + \mu^i$, and $| \nabla_1 v_\epsilon ^{ \lambda_\epsilon}| \rightharpoonup | \nabla_1 v| + \tau$. By  Bresis Lieb's  Lemma, one derives that 

$$|h_\epsilon|^{p^*} \rightharpoonup \nu, $$
 while 
 $$| \partial_i h_\epsilon |^{ p_i} \leq 2^{p_i-1} ( | \partial_i v_\epsilon ^{ \lambda_\epsilon}|^{p_i} + | \partial_i v |^{p_i})\rightharpoonup 2^{p_i-1} ( 2  | \partial_i v |^{p_i}+ \mu^i), $$
 and 

$$|\nabla_1 h_\epsilon | \leq |\nabla_1 (v_\epsilon)^{\lambda_\epsilon}| + |\nabla_1 v | \rightharpoonup 2 |\nabla_1 v| + \tau \text{ vaguely}.$$
     Using  the fact that $h_\epsilon $ tends to $0$ in $L^{p_i} ( Suppt \varphi)$, for all $i$,  since $p_i < p^\star$, one has $\int |h_\epsilon |^{p_i
} | \partial_i \varphi |^{p_i} \rightarrow 0$. 
Passing to the limit in \eqref{passingtothelim}, one gets 
     $$ \left(\int | \varphi |^{p^\star } d \nu\right)^{1\over p^\star} \leq {1 \over T_o} \left(\int | \varphi |d ( 2| \nabla v| + \tau)\right) ^{N_1\over N}  \Pi_{ i =N_1+1}^N \left( \int |  \varphi  |^{p_i} d (2^{p_i-1} ( 2  | \partial_i v |^{p_i}+ \mu^i ) ) \right)^{1 \over N p_i} .$$ 
        We then use  for $i \geq N_1+1$
      $$ \int |  \varphi  |^{p_i} d (2^{p_i-1} ( 2  | \partial_i v |^{p_i}+ \mu^i ) )\leq p^+( \int\mu )^{1-{p_i \over p^+} } (\int | \varphi |^{p^+}d \mu )^{{p_i \over p^+}},  $$
       and 
       $$\int | \varphi | d ( 2| \nabla v| + \tau)\leq  p^+ (\int \mu) ^{1-{1 \over p^+}} (\int | \varphi |^{p^+}d \mu )^{{1 \over p^+}}. $$
      Taking the power ${1\over Np_i}$ and ${N_1 \over N} $ and multiplying the inequalities,  one derives {\bf  Claim  1}. 
      
      \bigskip

          By  \eqref{inmesure}    one sees that $\nu$ is absolutely continuous with respect to $\mu$, with for some constant $c$ and 
          for any borelian set $E$, 
           $$\nu (E) \leq c \mu (E)^{ p^\star \over p^+}$$

           Let then  $h\geq 0$ be  $\mu$ integrable so that $\nu = h d \mu$. Then if $x$ is a density point for $\mu$, ie, so that 
            $\lim_{ r \rightarrow 0} \mu (B(x, r))=0$, one gets that 
            ${\nu (B(x, r)) \over \mu ( B(x, r))}  \rightarrow 0$, hence  if $D$ is the at most numerable set where $\mu ( \{ x_j\}) >0$, one has 
            $h= 0$ in $\R^N \setminus D$. This implies that $\nu$ has only atoms that we will denote $\{x_j\}_{j\in \N} $. 
            
            \medskip
                
    We now prove     5. 
    We still follow the lines in \cite{HR2}.
    
     Let $\delta >0$ small, $q_i = { p_i p^\star \over p^\star-p_i}$, $\alpha_i = {1\over q_i}$, ( note that $\sum_{i=1}^N \alpha_i = 1$), define  for $j\in \N$ fixe,   $\phi \in { \cal D} (B(0,1))$, $\phi (0)= 1$, $0\leq \phi\leq 1$
    the function $\phi_\delta$ as    $ \phi_\delta ( x)=  \phi ( {x-x_j^1 \over \delta^{ \alpha_1}}, \cdots , {x-x_j^N \over  \delta^{ \alpha_N}})$. 
     $\phi_\delta$ satisfies 
      $\int _{\R^N} | \partial_i \phi_\delta |^{q_i} = \int_{\R^N}  | \partial_i  \phi|^{ q_i}$. 
      In particular  for all $i \leq N$, 
       \begin{equation}\label{phidelta}\int_{\R^N}  | \partial_i \phi_\delta|^{ p_i} v^{ p_i} \leq ( \int_{\R^N}  | \partial_i \phi_\delta |^{q_i} )^{p_i\over q_i}( \int_{ B(x_j, \max_i \delta  ^{ \alpha_i})} v^{ p^\star})^{ p_i \over p^\star} \rightarrow 0, 
       \end{equation}
        when $\delta$ goes to zero. 
       
       {\bf Claim 2}
     $$
        {\cal K}   \nu_j^{ p^+\over p^\star} \leq \limsup_{ \delta\rightarrow 0} \limsup_{ \epsilon \rightarrow 0} \ \int_{\R^N} \left(  \phi_\delta | \nabla_1  v_\epsilon ^{ \lambda_\epsilon } | + \sum_{ i = N_1+1}^N {1\over p_i} | \partial_i ( v_\epsilon^{ \lambda_\epsilon})|^{p_i} \phi_\delta^{ p_i}\right)
     $$
        
       To prove  {\bf Claim 2},  we apply  Lemma \ref{lem1}  with  $|v_\epsilon ^{ \lambda_\epsilon} \phi_\delta  |_{ p^\star} \leq 1$
         $${\cal K} (\int_{\R^N}  |v_\epsilon ^{ \lambda_\epsilon} \phi_\delta  |^{ p^\star})^{ p^+ \over p^\star}  \leq \int_{\R^N}  | \nabla_1 ( v_\epsilon ^{ \lambda_\epsilon} \phi_\delta)| + \sum_{i=N_1+1}^N{1 \over p_i} \int_{\R^N}  | \partial_i ( v_\epsilon ^{ \lambda_\epsilon} \phi_\delta )|^{p_i}. $$
           We use 
           \begin{eqnarray*}
           \left\vert  |\nabla_1 ( v_\epsilon ^{ \lambda_\epsilon} \phi_\delta)| -| \nabla_1 ( v_\epsilon ^{ \lambda_\epsilon}) | \phi_\delta \right\vert &\leq & v_\epsilon ^{ \lambda_\epsilon}  | \nabla_1 \phi_\delta|\\
            &\leq & | v_\epsilon ^{ \lambda_\epsilon} -v| | \nabla_1  \phi_\delta| + v| \nabla_1 \phi_\delta|, 
                       \end{eqnarray*}
                   hence by    (\ref{phidelta}) when $p_i = 1$ and  $  v_\epsilon^{\lambda_\epsilon}-v\rightarrow 0$ in  $L^q_{loc}$ for all $q < p^\star$, this goes to zero in  $L^1$ when $\epsilon$ and $\delta$ go to zero. 
           For $i \geq N_1+1$,  by the mean value's theorem 
             \begin{eqnarray*}
            \left\vert  | \partial_i ( v_\epsilon ^{ \lambda_\epsilon} \phi_\delta)|^{p_i}\right.&&\left. - | \partial_i ( v_\epsilon ^{ \lambda_\epsilon}) \phi_\delta|^{p_i} \right\vert \\
            &\leq & 
            p_i | (\partial_i \phi_\delta )  v_\epsilon ^{ \lambda_\epsilon} | \ \left\vert  | \partial_i \phi_\delta | v_\epsilon ^{ \lambda_\epsilon} + |  \partial_i ( v_\epsilon ^{ \lambda_\epsilon} )\phi_\delta|\right\vert^{p_i-1} \\
            &\leq & p_i  \left(  |\partial_i \phi_\delta  || v_\epsilon ^{ \lambda_\epsilon} -v|+  |\partial_i \phi_\delta |  v \right) \left\vert |(  \partial_i \phi_\delta)  v_\epsilon ^{ \lambda_\epsilon}| +  |  \partial_i ( v_\epsilon ^{ \lambda_\epsilon} )|\phi_\delta\right\vert^{p_i-1}.
            \end{eqnarray*}
           Using  Holder's inequality, ( \ref{phidelta}) for $i \geq N_1+1$,    the fact that  $\left\vert  |  \partial_i \phi_\delta | v_\epsilon ^{ \lambda_\epsilon} +  |  \partial_i ( v_\epsilon ^{ \lambda_\epsilon} )\phi_\delta|\right\vert^{p_i-1}
$ is bounded in $L^{ p_i\over p_i-1}$,  and $  v_\epsilon^{\lambda_\epsilon}-v\rightarrow 0$ in  $L^q_{loc}$ for all $q < p^\star$, this goes to zero in $L^1$, when $\epsilon$ and $\delta$ go to zero.
{\bf Claim 2} is proved.

\medskip

  We can now conclude, using the fact that $|\nabla_1v|$ is orthogonal to Dirac masses,  as a  consequence of the results on the dimension of the  support of $| \nabla_1 v|^s$, 
 \cite{Gi}, and using  the fact that $| \partial_i v |^{p_i}$  belongs to $L^1$,  for $i\geq N_1+1$,  that 
   $${\cal K}  \nu_j^{ p^+\over p^\star} \leq \limsup _{ \delta\rightarrow 0} (\int_{\R^N}  \tau \phi_\delta + \sum_{i=N_1+1}^N{1\over p_i} \int_{\R^N}  \mu^i\phi_\delta^{p_i} )$$
   Defining $\tau_j = \limsup_{ \delta \rightarrow 0} \int_{\R^N}  \tau \phi_\delta$ and 
    $\mu_j ^i = \limsup_{ \delta \rightarrow 0} \int_{\R^N} \mu^i \phi_\delta^{p_i}$, one gets the first part of 5.

          To prove the last part of 5,             let $R>0$ large  and $\psi_R$ some ${ \cal C}^\infty$ function which is $0$ on $|x|<R$, and equals    $1$  for $|x| > R+1$, $0\leq \psi_R \leq 1$. 
         It can easily be seen that for  any $i\geq N_1+1$ and for any $\gamma_i \geq 1$
                    \begin{equation}\label{eqmuinf1} \int_{ |x| > R+1}  | \partial_i v_\epsilon ^{ \lambda_\epsilon} |^{p_i} \leq  \int_{ \R^N}  | \partial_i v_\epsilon ^{ \lambda_\epsilon} |^{p_i} \psi_R^{ \gamma_i}\leq   \int_{ |x| > R}  | \partial_i v_\epsilon ^{ \lambda_\epsilon} |^{p_i}
                    \end{equation}
                     
                     \begin{equation}\label{eqmuinf2}   \int_{ |x| > R+1}  | \nabla_1 v_\epsilon^{ \lambda_\epsilon} |\leq  \int_{ \R^N}  | \nabla_1v_\epsilon ^{ \lambda_\epsilon} | \psi_R^{\gamma_1}\leq  \int_{ |x| > R}  | \nabla_1 v_\epsilon^{ \lambda_\epsilon} |
                     \end{equation}
                      and 
                        \begin{equation}\label{eqnuinf}
                          \int_{ |x| > R+1}  | v_\epsilon ^{ \lambda_\epsilon} |^{p^\star} \leq  \int _{\R^N} | v_\epsilon ^{ \lambda_\epsilon} |^{p^\star}\psi_R^{p^\star} \leq  \int_{ |x| >R }  | v_\epsilon ^{ \lambda_\epsilon} |^{p^\star}.
                          \end{equation}
                                                               And then by the definition of $\mu_\infty$
                   
                   \begin{equation*}
                   \underset{R\to+\infty}{\lim} \underset{\epsilon\to 0}{\limsup}\int_{\R^N} |\nabla_1 v_\epsilon^{\lambda_\epsilon}| \psi_R + \sum_{i=N_1+1}^N {1\over p_i} \int_{\R^N} |\partial_i v_\epsilon^{\lambda_\epsilon}|^{p_i}\psi_R^{p_i}  = \mu_\infty. 
                   \end{equation*}
Let us remark that  since $v\in BV^{\vec p}$,  one has 
  $\lim_{ R \rightarrow +\infty} \int | \nabla_1 v | \psi_R +\sum_{i = N_1+1}^N {1\over p_i} \int | \partial_i v |^{p_i} \psi_R^{ p_i}  + \int_{\R^N} |v|^{ p^\star} \psi_R^{ p^\star} = 0$. 
          
                       We use  once more $h_\epsilon  = v_\epsilon ^{ \lambda_\epsilon}-v$,  which  goes to zero in $L^q_{loc}$.              
                Note that since $|h_\epsilon |_{ p^\star} \leq 1$, one also has $|h_\epsilon  \psi_R |_{ p^\star} \leq 1$ and then applying   Lemma \ref{lem1}
               \begin{equation}\label{pourlineginfini}
               {\cal K}  (\int |h_\epsilon  \psi_R|^{ p^\star} )^{ p^+ \over p^\star} \leq \int | \nabla_1( h_\epsilon  \psi_R) | + \sum_{ i = N_1+1}^N {1 \over p_i} \int | \partial_i ( h_\epsilon  \psi_R)|^{p_i}.
               \end{equation}
                                Since $\nabla \psi_R$ is compactly supported  in $R<|x| < R+1$,  and since $p_i < p^\star$
                  one has 
                  $$\underset{\epsilon\to0}{\lim}  \int_{\R^N}  h_\epsilon | \nabla_1 ( \psi_R) |+ \sum_{ i = N_1+1}^N {1 \over p_i} \int _{\R^N}| \partial_i \psi_R |^{p_i} h_\epsilon  ^{ p_i} = 0.$$
                                    Then 
                $$\lim_{ R \rightarrow +\infty} \limsup_{\epsilon\rightarrow 0}       \int_{\R^N} | \nabla_1( h_\epsilon  \psi_R) | + \sum_{ i = N_1+1}^N {1 \over p_i} \int_{\R^N} | \partial_i ( h_\epsilon  \psi_R)|^{p_i} = \mu_\infty.
              $$
                Note  also that 
                    $ \underset{R\to +\infty}{\lim} \underset{\epsilon\to0}{\limsup}\  {\cal K}  (\int_{\R^N} |h_\epsilon  \psi_R|^{ p^\star} )^{ p^+ \over p^\star} =  {\cal K}  \nu_\infty ^{ p^+ \over p^\star}$,                   
                                           hence, taking the limit in \eqref{pourlineginfini}, one gets 
                                         $ {\cal K}  \nu_\infty ^{ p^+ \over p^\star}\leq \mu_\infty$.

                   To show 6.       by the definition of $\tau$ and $\mu^i$,                               \begin{multline*}
                       \underset{R\to+\infty}{\lim} \underset{\epsilon\to 0}{\limsup}\int_{\R^N} |\nabla_1 v_\epsilon^{\lambda_\epsilon}| (1-\psi_R) + \sum_{i=N_1+1}^N {1\over p_i}\int_{\R^N} |\partial_i v_\epsilon^{\lambda_\epsilon}|^{p_i}(1-\psi_R)\\= \int_{\R^N} |\nabla_1 v| + \int_{\R^N} \tau + \sum_{i=N_1+1}^N {1\over p_i}  \int_{\R^N} (|\partial_i v|^{p_i} + \mu^i ).
                   \end{multline*}
                                 And then one gets 6. by writing $1= \psi_R + (1-\psi_R)$  and using (\ref{eqmuinf1}) and (\ref{eqmuinf2}). 
                   
                  7 can be proved in the same manner. 8 is obtained by gathering 4.  and (\ref{eqnuinf}).

\end{proof}

             \begin{proof}{ of Theorem \ref{exiext}}
             We take  a subsequence  $v_{\epsilon^\prime}$ so that 
             $${1\over 1+ \epsilon^\prime }\int_{\R^N} | \nabla_1
 v_{\epsilon ^\prime}|^{1+ \epsilon^\prime }   + \sum_{i= N_1+1}^N  {1\over p_i^{ \epsilon^\prime} } \int  | \partial_i v_{\epsilon^\prime} |^{p_i^{\epsilon^\prime}} ={\cal  K}_{\epsilon^\prime} $$ with $\lim    {\cal K} _{\epsilon^\prime}= \liminf {\cal K} _\epsilon,$        
in the sequel we will   still denote it  $v_\epsilon$ for simplicity . 
             
                     We  are going to prove both that 
               $\limsup  {\cal K} _\epsilon = {\cal K} = \liminf {\cal K} _\epsilon $,    $\nu_\infty = \mu_\infty = 0, \mu_j^i = \nu_j = 0$,  for all $j \in \N$,  that  for all $i$ $| \partial_i v_\epsilon |^{ p_i^\epsilon}\rightarrow | \partial_i v|^{p_i}$, tightly on $\R^N$,  and 
                that $\lim | \nabla_1( v_\epsilon ^{ \lambda_\epsilon})| = \lim | \nabla_1 v_\epsilon |^{1+ \epsilon} = | \nabla_1 v|$, tightly on $\R^N$. 
                                 Indeed, using the previous convergences in Theorem \ref{thcomp}
                  \begin{eqnarray*}
                  \int_{\R^N} | \nabla_1v | &+& \int_{\R^N}  \tau + \sum_{i=N_1+1}^N {1 \over p_i} \int_{\R^N} | \partial_i v |^{ p_i} + \sum_{i=N_1+1}^N{1\over p_i} \int_{\R^N} \mu^i + \mu_\infty \\
                  &\leq &  \int_{\R^N} | \nabla_ 1v | + \int_{\R^N} \tilde  \tau + \sum_{i=N_1+1}^N {1 \over p_i} \int_{\R^N} | \partial_i v |^{ p_i} + \sum_{i=N_1+1}^N{1\over p_i} \int_{\R^N} \tilde \mu^i +\tilde  \mu_\infty\\
                             &\leq   &   \lim  
                 {1\over 1+ \epsilon}   \int_{\R^N} | \nabla_1  v _\epsilon  |^{1 + \epsilon } + \sum_{i=N_1+1}^N{1 \over p_i^\epsilon} \int_{\R^N} | \partial_i (v_\epsilon )|^{ p_i^ \epsilon}
            \\
            &=& \liminf { \cal K}_\epsilon = \liminf { \cal K}_\epsilon ( |v|^{p^\star} + \sum \nu_j + \nu_\infty )^{ p^+ \over p^\star}
          \\                                       
            & \leq&   \liminf { \cal K}_\epsilon\left( ( |v|^{p^\star}_{p^\star} )^{p^+\over p^\star} + (\sum \nu_j )^{ p^+ \over p^\star}+ \nu_\infty^{p^+\over p^\star}  \right)
             \\                                                              &\leq &\liminf {\cal K} _\epsilon \left( \int_{\R^N} | v|^{p^\star}_{p^\star} \right)^{ p^+ \over p^\star} + {\liminf {\cal K} _\epsilon \over {\cal K} }\left[ \sum_j  (\ \tau_j \right.
                                      + \left.\sum_{i= N_1+1}^N{ 1 \over p_i}  \mu_j ^i )+  \mu_\infty\right]  
                                       \end{eqnarray*}
                   \begin{eqnarray*}\\  
     \\                                          &\leq & {\liminf {\cal K} _\epsilon \over {\cal K} } \left(   \int_{\R^N} | \nabla_1v | +  \sum_{i= N_1+1}^N{1 \over p_i} \int_{\R^N} | \partial_i v |^{ p_i} \right)            \\                                        &+& {\liminf{\cal K} _\epsilon \over {\cal K} }\left( \sum_j \tau_j + \sum_j \sum_{i= N_1+1}^N{1 \over p_i} \mu_j ^i + \mu_\infty\right)\\
                                     \end{eqnarray*}
                    Using the fact that $ \limsup {\cal K} _\epsilon \leq {\cal K} $, $ \int_{ \R^N}  \tau \geq \sum_j   \tau_j$, $\int_{ \R^N}\mu^i  \geq  \sum_j   \mu_j ^i $, one gets that we have equalities in place of inequalities everywhere we used them. In particular 
                    $ ( \int_{\R^N} | v|^{p^\star} + \sum_j \nu_j + \nu_\infty)^{ p^+ \over p^\star}  = ( \int_{\R^N} | v|^{p^\star} )^{ p^+ \over p^\star}  +  \sum_j   \nu_j ^ {p^+ \over p^\star} + \nu_\infty ^ {p^+ \over p^\star} $ , and then only one of the positive  reals $\int_{\R^N} | v|^{p^\star} , \nu_j , \nu_\infty$,  can be  different from zero. But this imposes that the only one which is $\neq 0$ must be equal  to  one.  By Remark \ref{rema1}, one then  gets $\nu_\infty = 0$. On the other hand, let $j\in\N$,  either $x_j \notin B(0, 1)$  and then  for $\delta$ small enough $\int_{B(x_j, \delta)} |v_\epsilon |^{ p_\epsilon^\star} + \int _{ B(0, 1)} |v_\epsilon |^{ p_\epsilon^\star}  \leq 1$, hence $\nu_j = 0$, or $x_j \in B( 0,1) $ and then 
                     $\nu_j \leq  \lim\int_{ B(0, 1)} |v_\epsilon |^{ p_\epsilon^\star} = {1\over 2}$, and once more $\nu_j = 0$. \     One then derives that                          
                     $1=  |v_\epsilon|_{ p_\epsilon^\star} ^{ p_\epsilon^\star} \rightarrow |v|^{p^\star}_{p^\star}$.   By the definition of ${\cal K}$ one has 
            \begin{eqnarray*}
              {\cal K}  \leq    | \nabla_1 v |_1 + \sum_{N_1+1}^N {1 \over p_i } | \partial_i v |^{ p_i} & \leq&   | \nabla_1 v |_1 + \sum_{N_1+1}^N {1 \over p_i } | \partial_i v |^{ p_i} +\tilde \tau+  \sum_{N_1+1}^N{1 \over p_i} \tilde \mu^i + \tilde \mu_\infty \\
              & \leq& \liminf {\cal K} _\epsilon \leq \limsup {\cal K} _\epsilon \leq  {\cal K} 
              \end{eqnarray*}
                         and then  $\tilde \tau= \tau =\tilde  \mu_\infty= \mu_\infty  =\tilde  \mu^i =\mu^i= 0$,  
             $\lim | \nabla _1 v_\epsilon|_{1+ \epsilon}^{1+ \epsilon} = \lim | \nabla_1( v_\epsilon ^{ \lambda_\epsilon})|_1= | \nabla_1 v|_1$, and for all $i \geq N_1+1$, both  $ |\partial_i ( v_\epsilon^{ \lambda_\epsilon} )|_{p_i} ^{ p_i}$ and $ | \partial_i v_\epsilon |_{ p_i^\epsilon} ^{ p_i^\epsilon}$ converge to $| \partial_i v|^{p_i}_{p_i}$.   
             We have obtained that  $v$ is an extremal function, and     $\lim {\cal K} _\epsilon = {\cal K}$.          
             \bigskip
             
              We now prove that $v$ satisfies (\ref{partial differential equation }). 
               First recall that $l_\epsilon\geq {\cal K} _\epsilon \geq {1 \over p^+} l_\epsilon$, as we can see by multiplying (\ref{extremalepsilon}) 
              by $v_\epsilon$ the equation, integrating,  and using $|v_\epsilon |_{ p_\epsilon ^\star} ^{p_\epsilon^\star} = 1$.  In particular $l_\epsilon$ is bounded. Let us extract from it a subsequence which converges to some $l\geq 0$.

               Let us define 
                                           $\sigma^{1, \epsilon} = |\nabla_1 v_\epsilon|^{\epsilon-1} \nabla_1 v_\epsilon$,   
                                                           $\sigma_i^\epsilon = | \partial_i v_\epsilon |^{p_i ^\epsilon -2} \partial_i    v_\epsilon$ for $i \geq N_1+1$, and  -with an obvious abuse of notation- $\sigma_\epsilon= (\sigma^{1,\epsilon}, \sigma_{N_1+1}^\epsilon,\cdots,\sigma_N^\epsilon)$.         Note that            $\sigma^{1,\epsilon} $ is bounded in $L^q_{loc}$, for any $ q< \infty$. Indeed, let $K$ be a compact set, one has by Holder's inequality
                                     $\int_K  |\sigma^{1,\epsilon}| ^q  = \int _K | \nabla_1 v_\epsilon |^{ \epsilon  q} \leq (\int _K | \nabla_1  v_\epsilon |^{ 1+ \epsilon })^{ q \epsilon\over 1+ \epsilon} | K|^{ 1-{q\epsilon \over 1+ \epsilon }}$
               and then 
                $( \int _K|\sigma^{1,\epsilon} |^ q  )^{1\over q} \leq    ((1+ \epsilon)  {\cal K}_\epsilon)^{ \epsilon \over 1+ \epsilon} |K|^{{1\over q}-{\epsilon \over 1+ \epsilon}}$.
                Using the  boundedness of ${\cal K}_\epsilon$ one gets 
                 that $\sigma^{1,\epsilon}$ is bounded in $L^q_{loc}$,  hence converges up to subsequence  weakly in $L^q_{loc}$ to some $\sigma ^1$ which satisfies for any  compact  set $K$ 
                 $|{\sigma^1}|_{L^q( K)} \leq |K|^{1\over q},$ hence  ${\sigma^1} \in L^\infty( \R^N, \R^{N_1})$ and $| {\sigma^1}|_\infty \leq 1$.              Furthermore,   the strong convergence of $|\partial_i v_\epsilon|^{p_i^\epsilon} $ towards $|\partial_i v|^{p_i} $ in $L^1$ when $i \geq N_1+1$ ensures that  $ \sigma_i =    | \partial_i v |^{p_i-2} \partial_i v$. 
                From  these convergences, one gets  that defining  $\sigma=(\sigma^1,\sigma_{N_1+1},\cdots,\sigma_N)$,  by the definition in Theorem \ref{sigma1grad1},  
                $\sigma^\epsilon \cdot \nabla v_\epsilon$  converges to $\sigma \cdot \nabla v$ in the distribution sense.  Using $\sum_{ N_1+1}^N \sigma_i^\epsilon \partial_i v_\epsilon \rightarrow \sum_{N_1+1}^N \sigma_i \partial_i v$ in $L^1_{loc}$, one derives   that 
                $\sigma^{1,\epsilon}\cdot \nabla_1 v_\epsilon$  converges to $\sigma^1 \cdot \nabla_1 v$ in ${ \cal D}^\prime ( \R^N)$. 
                Since  $\sigma^{1,\epsilon}\cdot \nabla_1 v_\epsilon$  is also  bounded in $L^1$, this  convergence is    in fact  vague. 
                 By lower semi-continuity for the vague topology,   for any $\varphi \geq 0$ in ${ \cal C}_c ( \R^N)$
                  $$\int | \nabla_1 v | \varphi  \leq \underset{\epsilon \to 0}{\liminf} \int | \nabla_1 v_\epsilon |^{1+ \epsilon} \varphi = \underset{\epsilon \to 0}{\liminf} \int \sigma^{1, \epsilon} \cdot \nabla_1 v_\epsilon \varphi  = \langle  \sigma^1 \cdot \nabla_1 v,  \varphi\rangle$$ 
                  This implies that 
                  $| \nabla_1 v | \leq    \sigma^1 \cdot \nabla_1 v $ in the sense of measures, and since one always has the reverse inequality, we have obtained that ${\sigma^1} \cdot \nabla_1 v = | \nabla_1 v|$.                  
                   
                    We get by passing to the limit in \eqref{extremalepsilon} that $v$ satisfies the partial differential equation : 
                    
                    $$ -{\rm div}_1 ( {\sigma^1}) - \sum_{ i = N_1+1}^N\partial_i ( |\partial_i v |^{ p_i-2} \partial_i v )=l v^{ p^\star-1}
                    $$
                     with 
                    $$ \int_{\R^N} v^{ p^\star}= 1, \ {\rm  and} \ {\sigma^1} \cdot \nabla_1 v = | \nabla_1 v|.
                 $$ 
                     
                       Furthermore,  multiplying the equation by $v$ and integrating, one gets $l \geq {\cal K} >0$.
                    
       \end{proof}
       
         \subsection{Proof of  Theorem \ref{Linfty}}
    
          We will prove the $L^\infty$ regularity when $u$ is some extremal function which satisfies  (\ref{partial differential equation }), with  $l = 1$. Indeed one has 
           \begin{lemme}\label{lemuueps}
           Let $v_\epsilon$ and $v$ be as in Theorem \ref{exiext}. Then 
           $$u ( x) = v(  l^{-1} x_1, \cdots , l^{-1}x_{N_1}, l^{-{1\over p_{N_1+1}} }x_{N_1+1}, \cdots , l^{-1 \over p_N} x_N)$$
            and   
            
            $$u_\epsilon  ( x) = v_\epsilon (  l_\epsilon^{-1\over 1+\epsilon} x_1, \cdots , l_\epsilon^{-1\over 1+\epsilon}x_{N_1}, l_\epsilon^{-{1\over p^\epsilon_{N_1+1}}} x_{N_1+1}, \cdots ,  l_\epsilon ^{-1 \over p^\epsilon_N} x_N)$$ 
                         satisfy respectively 
             $$ -{\rm div}_1 ( {\sigma^1(u)}) - \sum_{ i = N_1+1}^N\partial_i (| \partial_i u |^{ p_i-2} \partial_i u )= u^{ p^\star-1}
                    $$ with ${\sigma^1} \cdot \nabla_1 u = | \nabla_1 u|$, 
                     and 
                     \begin{equation}\label{extepsilon}-{ \rm div}_1 ( | \nabla_1 u_\epsilon|^{ \epsilon-1} \nabla_1 u_\epsilon ) -\sum_{ i=N_1+1}^N \partial_i ( | \partial_i u_\epsilon |^{ p_i^\epsilon-2} \partial_i u_\epsilon ) =  u_\epsilon ^{ p_\epsilon^\star-1}
   \end{equation}
  Furthermore $u_\epsilon$ converges tightly to $u$ in $BV^{ \vec p} ( \R^N)$. 
    \end{lemme}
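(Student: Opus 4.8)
The plan is to treat this as a scaling statement. Both changes of variables displayed are diagonal dilations: $x\mapsto Dx$ with $D=\mathrm{diag}(l^{-1/p_1},\dots,l^{-1/p_N})$, whose first $N_1$ entries equal $l^{-1}$ since $p_i=1$ there, and $x\mapsto D_\epsilon x$ with $D_\epsilon$ having its first $N_1$ entries equal to $l_\epsilon^{-1/(1+\epsilon)}$ and remaining entries $l_\epsilon^{-1/p_i^\epsilon}$. The exponents are chosen so that every term on the left-hand side of the extremal equation picks up the same homogeneity factor, which cancels with the Lagrange multiplier and normalizes it to $1$.

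I would first record the transformation rules for $u(x)=v(Dx)$. Writing $d_i$ for the $i$-th diagonal entry of $D$, one has $\partial_i u(x)=d_i(\partial_i v)(Dx)$, hence $\partial_i(|\partial_i u|^{p_i-2}\partial_i u)(x)=d_i^{\,p_i}\big(\partial_i(|\partial_i v|^{p_i-2}\partial_i v)\big)(Dx)$ for $i\ge N_1+1$; and, the first $N_1$ entries of $D$ being equal, $\nabla_1 u(x)=d_1(\nabla_1 v)(Dx)$, so one sets $\sigma^1(u)(x):=\sigma^1(v)(Dx)$, which still belongs to $L^\infty(\R^N,\R^{N_1})$ with modulus $\le 1$, still satisfies $\sigma^1(u)\cdot\nabla_1 u=|\nabla_1 u|$ as measures, and obeys ${\rm div}_1(\sigma^1(u))(x)=d_1({\rm div}_1\sigma^1(v))(Dx)$. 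Since $d_1=l^{-1}$ and $d_i^{\,p_i}=l^{-1}$ for $i\ge N_1+1$, the whole left-hand side of the equation for $u$ equals $l^{-1}$ times the left-hand side of the equation of Theorem \ref{exiext} for $v$ evaluated at $Dx$, that is $l^{-1}\cdot l\,v(Dx)^{p^\star-1}=u^{p^\star-1}$. The same computation applies to $u_\epsilon(x)=v_\epsilon(D_\epsilon x)$ inside (\ref{extremalepsilon}), with $l,p_i,1$ replaced by $l_\epsilon,p_i^\epsilon,1+\epsilon$: the anisotropic $1$-Laplacian term collects the factor $d_1^{\,\epsilon-1}\cdot d_1\cdot d_1=d_1^{\,\epsilon+1}=l_\epsilon^{-1}$ (one $d_1^{\,\epsilon-1}$ from $|\nabla_1 u_\epsilon|^{\epsilon-1}$, one $d_1$ from $\nabla_1 u_\epsilon$, one $d_1$ from ${\rm div}_1$) and the $i$-th term collects $d_i^{\,p_i^\epsilon}=l_\epsilon^{-1}$ for $i\ge N_1+1$, so (\ref{extremalepsilon}) turns into (\ref{extepsilon}).

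For the convergence of $u_\epsilon$ to $u$, I would work along the subsequence of Theorem \ref{exiext} on which $l_\epsilon\to l$; since $l\ge{\cal K}>0$ and $l_\epsilon\ge{\cal K}_\epsilon\to{\cal K}$, the dilation coefficients stay in a fixed compact subinterval of $(0,\infty)$ and $D_\epsilon\to D$. Convergence almost everywhere and in ${\cal D}'(\R^N)$ follows from $v_\epsilon\to v$ in $L^q_{loc}$ for $q<p^\star$ by writing $u_\epsilon-u=(v_\epsilon-v)\circ D_\epsilon+(v\circ D_\epsilon-v\circ D)$ and treating the first term through the change of variables $y=D_\epsilon x$ (its support lands in a fixed compact set and $\det D_\epsilon$ is bounded below) and the second by continuity of dilations on $L^q_{loc}$. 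For the masses, $y=D_\epsilon x$ gives $\int|\nabla_1 u_\epsilon|^{1+\epsilon}=l_\epsilon^{-1}(\det D_\epsilon)^{-1}\int|\nabla_1 v_\epsilon|^{1+\epsilon}$ and $\int|\partial_i u_\epsilon|^{p_i^\epsilon}=l_\epsilon^{-1}(\det D_\epsilon)^{-1}\int|\partial_i v_\epsilon|^{p_i^\epsilon}$, and from $N/p_\epsilon^\star=N_1/(1+\epsilon)+\sum_{i>N_1}1/p_i^\epsilon-1$ one gets $\det D_\epsilon=l_\epsilon^{-(N/p_\epsilon^\star+1)}$, so the common prefactor is $l_\epsilon^{N/p_\epsilon^\star}\to l^{N/p^\star}$, which is exactly the factor relating $\int|\nabla_1 u|$ and $\int|\partial_i u|^{p_i}$ to $\int|\nabla_1 v|$ and $\int|\partial_i v|^{p_i}$ via the same change of variables for $u$ and $v$. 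Combining this with the tight convergences $\int|\nabla_1 v_\epsilon|^{1+\epsilon}\to\int|\nabla_1 v|$ and $\int|\partial_i v_\epsilon|^{p_i^\epsilon}\to\int|\partial_i v|^{p_i}$ from Theorem \ref{exiext} yields $\int|\nabla_1 u_\epsilon|^{1+\epsilon}\to\int|\nabla_1 u|$ and $\int|\partial_i u_\epsilon|^{p_i^\epsilon}\to\int|\partial_i u|^{p_i}$, i.e. the asserted tight convergence of $u_\epsilon$ to $u$ in $BV^{\vec p}(\R^N)$.

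I expect the only delicate point to be the bookkeeping of the homogeneity exponents in the $\epsilon$-equation — in particular checking that the anisotropic $1$-Laplacian term really carries the factor $d_1^{\,\epsilon+1}=l_\epsilon^{-(\epsilon+1)/(1+\epsilon)}=l_\epsilon^{-1}$, matching the factor produced by the remaining terms, so that the Lagrange multiplier is correctly absorbed; the rest is a routine change of variables together with the convergence of $(v_\epsilon)$ already established in Theorem \ref{exiext}.
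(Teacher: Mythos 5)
Your scaling argument is correct and is precisely the computation the paper has in mind (it states the lemma and leaves the proof to the reader): the homogeneity bookkeeping checks out, since $d_1^{1+\epsilon}=l_\epsilon^{-1}$, $(d_i^\epsilon)^{p_i^\epsilon}=l_\epsilon^{-1}$, and $\det D_\epsilon=l_\epsilon^{-(N/p_\epsilon^\star+1)}$, so the multiplier is absorbed and the masses transform with the common factor $l_\epsilon^{N/p_\epsilon^\star}\to l^{N/p^\star}$, which combined with the convergences of Theorem \ref{exiext} gives the asserted tight convergence (understood, as in that theorem, for the $\epsilon$-dependent integrands $|\nabla_1 u_\epsilon|^{1+\epsilon}$ and $|\partial_i u_\epsilon|^{p_i^\epsilon}$). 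The only implicit step, that $\sigma^1(u)(x):=\sigma^1(v)(Dx)$ still satisfies $\sigma^1(u)\cdot\nabla_1 u=|\nabla_1 u|$ in the pairing sense of Theorem \ref{sigma1grad1}, is indeed routine under a diagonal dilation and is fine to assert.
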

     We do not give the proof of this lemma, which is left to the reader. 

         In the sequel we will consider $u$ and $u_\epsilon$ as in Lemma \ref{lemuueps}.

          \begin{lemme}\label{g(u)}
           Suppose that $u\in BV^{\vec p}$  is  as in      Lemma \ref{lemuueps}.        Suppose that $g $ is Lipschitz continuous on  $\R$, such that $g(0) = 0$ and   $g^\prime \geq 0$, then  $g(u) \in BV^{\vec p}$,  with 
            $\sigma^1 \cdot \nabla_1 (g(u)) = | \nabla_1 (g(u))|$.  Furthermore one has the identity 
             \begin{equation}\label{intgu}\int_{\R^N} | \nabla_1(g(u))| + \sum_{i = N_1+1}^N \int_{\R^N} g^\prime (u) | \partial_i u |^{ p_i} = \int _{\R^N}g(u) u^{ p^\star-1}
             \end{equation} 
                       \end{lemme}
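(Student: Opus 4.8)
The strategy is to obtain the statement first for the smooth approximants $u_\epsilon$, where $g(u_\epsilon)$ can be differentiated by the chain rule, and then pass to the limit using the tight convergence of $u_\epsilon$ to $u$ in $BV^{\vec p}(\R^N)$ given by Lemma \ref{lemuueps}. First I would check that $g(u)\in BV^{\vec p}(\R^N)$: since $g$ is Lipschitz with $g(0)=0$ one has $|g(u)|\le \Lip(g)\,|u|$, so $g(u)\in L^{p^\star}$; and formally $\partial_i(g(u))=g'(u)\partial_i u$, which lies in $L^{p_i}$ for $i\ge N_1+1$ because $0\le g'(u)\le\Lip(g)$, while $\nabla_1(g(u))=g'(u)\nabla_1 u$ is a bounded measure for the same reason. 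To make this rigorous for the $N_1$ first (measure-valued) directions, I would apply the chain rule at the level of the regularised sequence and then use the lower semicontinuity / density results of Proposition \ref{propBV}; in particular $g(u_\epsilon)\to g(u)$ a.e. and $g(u_\epsilon)$ is bounded in $BV^{\vec p}$, so $g(u)\in BV^{\vec p}(\R^N)$.

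Next I would establish $\sigma^1\cdot\nabla_1(g(u))=|\nabla_1(g(u))|$. The distribution $\sigma^1\cdot\nabla_1(g(u))$ is defined via Theorem \ref{sigma1grad1}, using that $\sigma^1\in L^\infty(\R^N,\R^{N_1})$ with $|\sigma^1|_\infty\le1$, that $\sigma_i=|\partial_i u|^{p_i-2}\partial_i u\in L^{p_i'}$ for $i\ge N_1+1$, and that ${\rm div}\,\sigma=-u^{p^\star-1}\in L^{p^\star/(p^\star-1)}$ from the partial differential equation. One always has $|\sigma^1\cdot\nabla_1(g(u))|\le|\sigma^1|_\infty|\nabla_1(g(u))|\le|\nabla_1(g(u))|$ by \eqref{ac}, so only the reverse inequality needs work. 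Here I would use that $g(u)$ is an increasing Lipschitz function of $u$: approximating $g$ by smooth increasing $g_k$, the chain rule gives $\sigma^1\cdot\nabla_1(g_k(u))=g_k'(u)\,\sigma^1\cdot\nabla_1 u=g_k'(u)|\nabla_1 u|$ as measures, and $g_k'(u)|\nabla_1 u|$ converges to $|\nabla_1(g(u))|$ (monotone/chain rule for $BV$ along the measure-directions); passing to the limit yields $\sigma^1\cdot\nabla_1(g(u))=|\nabla_1(g(u))|$. The delicate point is justifying the chain rule $\sigma^1\cdot\nabla_1(g(u))=g'(u)\,\sigma^1\cdot\nabla_1 u$ when $\nabla_1 u$ is only a measure; I would handle it through the smooth approximants $u_\epsilon$ combined with \eqref{ac}.

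Finally, to get the integral identity \eqref{intgu}, I would multiply equation \eqref{extepsilon} by $g(u_\epsilon)$ (a legitimate test function after truncation, since $g$ is Lipschitz and $g(0)=0$) and integrate over $\R^N$. This gives
\begin{equation*}
\int_{\R^N}|\nabla_1 u_\epsilon|^{\epsilon-1}\nabla_1 u_\epsilon\cdot\nabla_1(g(u_\epsilon))+\sum_{i=N_1+1}^N\int_{\R^N}g'(u_\epsilon)|\partial_i u_\epsilon|^{p_i^\epsilon}=\int_{\R^N}g(u_\epsilon)\,u_\epsilon^{p_\epsilon^\star-1},
\end{equation*}
and using $\nabla_1(g(u_\epsilon))=g'(u_\epsilon)\nabla_1 u_\epsilon$ the first term equals $\int g'(u_\epsilon)|\nabla_1 u_\epsilon|^{1+\epsilon}$. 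Then I would pass to the limit $\epsilon\to0$: the right-hand side converges to $\int g(u)u^{p^\star-1}$ by the tight $L^{p^\star}$-type convergence and $L^{q}_{loc}$ strong convergence of $u_\epsilon$ together with $\lambda_\epsilon\to1$; the terms $\int g'(u_\epsilon)|\partial_i u_\epsilon|^{p_i^\epsilon}$ converge to $\int g'(u)|\partial_i u|^{p_i}$ using the tight convergence $|\partial_i u_\epsilon|^{p_i^\epsilon}\to|\partial_i u|^{p_i}$ and a.e. convergence of $g'(u_\epsilon)$ (some care with the fact that $g'$ is only $L^\infty$, not continuous — I would first prove it for smooth $g$ and then approximate); and the measure term $\int g'(u_\epsilon)|\nabla_1 u_\epsilon|^{1+\epsilon}$ converges to $\int|\nabla_1(g(u))|$, which is exactly where the identity $\sigma^1\cdot\nabla_1(g(u))=|\nabla_1(g(u))|$ together with \eqref{intout} is used to identify the limit. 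I expect the main obstacle to be this last passage to the limit in the measure-valued term: one must show no mass of $|\nabla_1 u_\epsilon|^{1+\epsilon}$ is lost and that the weight $g'(u_\epsilon)$ interacts correctly with the weak-$*$ limit, which is precisely what the tight convergence in $BV^{\vec p}$ from Lemma \ref{lemuueps} and the absolute-continuity estimate \eqref{ac} are designed to provide.
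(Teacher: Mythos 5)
Your overall skeleton (prove the statement at the level of the approximating extremals $u_\epsilon$ of Lemma \ref{lemuueps}, where $\nabla_1(g(u_\epsilon))=g'(u_\epsilon)\nabla_1 u_\epsilon$, test \eqref{extepsilon} with $g(u_\epsilon)$ and pass to the limit) is the paper's, and the first step $g(u)\in BV^{\vec p}$ is fine. The genuine gap is in your argument for the key identity $\sigma^1\cdot\nabla_1(g(u))=|\nabla_1(g(u))|$: you reduce it to the chain rule $\sigma^1\cdot\nabla_1(g_k(u))=g_k'(u)\,\sigma^1\cdot\nabla_1 u=g_k'(u)|\nabla_1 u|$ for smooth increasing $g_k$, together with $g_k'(u)|\nabla_1 u|\to|\nabla_1(g(u))|$. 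For the limit function $u$, whose derivatives in the first $N_1$ directions are only measures, none of these identities is available, and they are in general false on the singular part of $\nabla_1 u$: already for a classical $BV$ function, on a jump the derivative of $g(u)$ carries the weight $g(u^+)-g(u^-)$ and not $g'(u)(u^+-u^-)$, and $g'(u)$ is not even defined $|\nabla_1 u|$-almost everywhere; moreover the pairing $\sigma^1\cdot\nabla_1 u$ is only known, via \eqref{ac}, to be a measure dominated by $|\nabla_1 u|$, so "multiplying it by $g'(u)$" has no meaning without a substantial additional representation theory. Saying you would handle this delicate point "through the smooth approximants $u_\epsilon$ combined with \eqref{ac}" is precisely the missing content rather than a proof, and the same unproved identification is what you invoke to pass to the limit in the term $\int g'(u_\epsilon)|\nabla_1 u_\epsilon|^{1+\epsilon}$.

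The paper avoids any chain rule at the level of $u$. It first proves that the measures $\sigma_\epsilon\cdot\nabla(g(u_\epsilon))$ converge in ${\cal D}'(\R^N)$ (hence vaguely, being bounded in $L^1$) to $\sigma\cdot\nabla(g(u))$, using only the integration-by-parts definition of Theorem \ref{sigma1grad1}: the strong $L^q_{loc}$ convergence of $g(u_\epsilon)$ to $g(u)$, the convergence of $\sigma_\epsilon$ in suitable local Lebesgue spaces, and the domination $u_\epsilon^{p_\epsilon^\star-1}g(u_\epsilon)\le |g'|_\infty\,u_\epsilon^{p_\epsilon^\star}$ together with an $L^1$-dominating function coming from the strong $L^1$ convergence of $u_\epsilon^{p_\epsilon^\star}$. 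Subtracting the terms $\sigma_i^\epsilon\,\partial_i(g(u_\epsilon))$, which converge by weak--strong pairing, gives $\sigma^{1,\epsilon}\cdot\nabla_1(g(u_\epsilon))\to\sigma^1\cdot\nabla_1(g(u))$. Then, for $\varphi\ge 0$, lower semicontinuity and $(g'(u_\epsilon))^{1+\epsilon}\le |g'|_\infty^{\epsilon}\,g'(u_\epsilon)$ yield $\int|\nabla_1(g(u))|\varphi\le\liminf\int|\nabla_1(g(u_\epsilon))|^{1+\epsilon}\varphi\le\lim\int\sigma^{1,\epsilon}\cdot\nabla_1(g(u_\epsilon))\,\varphi=\langle\sigma^1\cdot\nabla_1(g(u)),\varphi\rangle$, while \eqref{ac} gives the reverse inequality; the identity \eqref{intgu} then follows by testing \eqref{extepsilon} with $g(u_\epsilon)\varphi$, using these convergences, and letting $\varphi$ increase to $1_{\R^N}$. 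Note also that your treatment of $\int g'(u_\epsilon)|\partial_i u_\epsilon|^{p_i^\epsilon}$ through almost everywhere convergence of $g'(u_\epsilon)$ and an approximation of $g$ is both shaky (for discontinuous $g'$) and unnecessary: the paper identifies that limit as $\int\sigma_i\,\partial_i(g(u))=\int g'(u)|\partial_i u|^{p_i}$ by weak--strong convergence, with no pointwise convergence of $g'(u_\epsilon)$ involved.
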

                         \begin{proof}
                         In the following lines, we will use "UTS" to say that the  convergence   holds up to subsequence . 
                          
                      Note that $g( u_\epsilon)\in { \cal D}^{1, \vec {p_\epsilon}}( \R^N)$   by the mean value's theorem,  
                        since $g^\prime\in L^\infty$,  and $(g(u_\epsilon))_{\epsilon} $  is   bounded in that space by the assumptions on $u_\epsilon$, and then also in $BV^{\vec p}_{loc}$. Then since $u_\epsilon$ converges to $u$ almost everywhere "UTS" and $g$ is continuous, $g(u) \in  BV^{ \vec p}( \R^N)$, and $g(u_\epsilon)$ converges weakly to  $g(u)$ in $BV^{\vec p}_{loc}$ "UTS" . In particular it converges to $g(u)$ in $L^q_{loc}$, "UTS" for all $q < p^\star$. 
                         Let us observe that the  sequence of measures $\sigma_\epsilon \cdot \nabla ( g( u_\epsilon))$ converges "UTS"  to $\sigma \cdot \nabla ( g(u))$ : Since $\sigma_\epsilon \cdot \nabla g(u_\epsilon)$ is bounded in $L^1$,   it is sufficient to prove that it converges in the distribution sense. To check this, let $\varphi \in { \cal D} ( \R^N)$,   take $q< p^\star$ so that for $\epsilon$ small enough  $p_i^\epsilon < q$,   then  $\sigma_\epsilon \rightarrow \sigma$ "UTS" in $L^{ q^\prime}_{loc}$.  Using  $g(u_\epsilon)\rightarrow g(u)$ in $L^q_{loc}$ strongly  and "UTS" for all $q < p^\star$,  one has 
                         $\int g(u_\epsilon) \sigma_\epsilon\cdot \nabla \varphi\rightarrow \int  g(u)\sigma \cdot \nabla \varphi$.
                         Secondly  note that                            $u_\epsilon^{ p_\epsilon^\star-1} g(u_\epsilon) \leq |g^\prime|_\infty |u_\epsilon |^{ p_\epsilon^\star}$. By  the strong convergence of $(u_\epsilon)^{ p_\epsilon^\star}$ in $L^1$ one can  suppose that "UTS" is dominated by a function $h$ in $L^1$,  hence so does $u_\epsilon^{ p_\epsilon^\star-1} g(u_\epsilon) $.  By the almost everywhere  convergence "UTS"of $u_\epsilon^{ p_\epsilon^\star-1} g(u_\epsilon)$  to $u^{ p^\star-1} g(u)$ and  the Lebesgue's dominated convergence theorem, one gets that 
                           for any $\varphi \in { \cal D} ( \R^N)$, $\int u_\epsilon^{ p_\epsilon^\star-1} g(u_\epsilon)\varphi \rightarrow \int u^{ p^\star-1} g(u)\varphi$. 
                           We have obtained that 
                    $ \int \sigma_\epsilon \cdot \nabla ( g( u_\epsilon))\varphi  \rightarrow \int\sigma \cdot \nabla ( g(u))\varphi$, for any $\varphi$ in ${ \cal D} ( \R^N)$, hence also  for $\varphi $ in ${ \cal C}_c ( \R^N)$. Furthermore, 
                    by lower semicontinuity one has  for all $\varphi \geq 0$ in ${ \cal C}_c ( \R^N)$, 
                     \begin{eqnarray*}
                     \int | \nabla_1 ( g(u))| \varphi &\leq& \underset{\epsilon \to 0}{\liminf} \int |\nabla_1 ( g(u_\epsilon))|^{1+ \epsilon} \varphi \\
                      &=& \underset{\epsilon \to 0}{\liminf} \int (g^\prime ( u_\epsilon ))^{1+ \epsilon} | \nabla_1 u_\epsilon |^{1+ \epsilon} \varphi 
                   \\
                                          &\leq& \underset{\epsilon \to 0}{\liminf} |g^\prime |_\infty ^\epsilon \int (g^\prime ( u_\epsilon )) | \nabla_1 u_\epsilon |^{1+ \epsilon} \varphi 
                                             \end{eqnarray*}
                      \begin{eqnarray*}\\
                                                                       &=& \underset{\epsilon \to 0}{\liminf}\int \sigma^{1, \epsilon} \cdot \nabla_1 (g(u_\epsilon)) \varphi \\
                                         &=&\int \sigma^1\cdot \nabla_1 (g(u)) \varphi 
                     \end{eqnarray*}
                      This implies since one also  has 
                      $\sigma^1\cdot \nabla_1 (g(u))\leq | \nabla_1 g(u)|$,  that 
                        $\sigma^1 \cdot \nabla_1 (g(u)) = | \nabla_1 (g(u))|$.

                                            To get    identity (\ref{intgu} ) it is then sufficient to multiply  the equation ( \ref{extepsilon}) by $g(u_\epsilon)\varphi$,  and pass to the limit using the previous convergence. Next one can let $\varphi$ go to $1_{\R^N}$ since all the measures involved are bounded measures.

                       \end{proof}
             
       \begin{cor}\label{coruua}
                Let $u$ be as in Lemma \ref{lemuueps}.           
           For any $L$ and $ a>0$, 
            $( u \min ( u^a, L))\in BV^{ \vec p} ( \R^N)$,  
                       $ {\sigma^1} \cdot \nabla_1(  u \min ( u^a, L)) =  | \nabla_1( u \min ( u^a, L)) |$, and 
                       $$  \int | \nabla_1 ( u \min ( u^a, L))| +\sum_{ i = N_1+1}^N\left({1\over 1+ {a\over p_i}}\right)^{p_i-1}  \int |\partial_i (u \min ( u^{ a \over p_i}, L ))|^{p_i} \leq \int u^{ p^\star} \min ( u^a, L).$$
            \end{cor}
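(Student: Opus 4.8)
The plan is to obtain the Corollary as a direct application of Lemma~\ref{g(u)}. Fix $L>0$ and set $g(t)=t\min(t^a,L)$ for $t\geq 0$ and $g(t)=0$ for $t\leq 0$: this $g$ is Lipschitz on $\R$ with $g(0)=0$ and $g'\geq 0$ (one has $g'(t)=(1+a)t^a$ where $t^a<L$ and $g'(t)=L$ where $t^a>L$). Since $u\geq 0$ we get $g(u)=u\min(u^a,L)$, so Lemma~\ref{g(u)} applies and gives at once that $u\min(u^a,L)\in BV^{\vec p}(\R^N)$, that ${\sigma^1}\cdot\nabla_1(u\min(u^a,L))=|\nabla_1(u\min(u^a,L))|$, and, since $g(u)u^{p^\star-1}=u^{p^\star}\min(u^a,L)$, the identity
\begin{equation}\label{eqcoridentity}
\int_{\R^N}|\nabla_1(u\min(u^a,L))|+\sum_{i=N_1+1}^N\int_{\R^N}g'(u)\,|\partial_i u|^{p_i}=\int_{\R^N}u^{p^\star}\min(u^a,L).
\end{equation}
So the $\nabla_1$-term and the right-hand side of \eqref{eqcoridentity} already coincide with those of the statement, and it only remains to bound each $i$-th integral from below.

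Fix $i\geq N_1+1$ and set $\psi_i(t)=t\min(t^{a/p_i},L)$, again Lipschitz with $\psi_i(0)=0$. Since $\partial_i u\in L^{p_i}(\R^N)$, the chain rule gives $\partial_i(u\min(u^{a/p_i},L))=\psi_i'(u)\,\partial_i u$ a.e., hence $|\partial_i(u\min(u^{a/p_i},L))|^{p_i}=|\psi_i'(u)|^{p_i}\,|\partial_i u|^{p_i}$. Therefore the estimate
$$\int_{\R^N}g'(u)\,|\partial_i u|^{p_i}\ \geq\ \left(\frac{1}{1+a/p_i}\right)^{p_i-1}\int_{\R^N}|\partial_i(u\min(u^{a/p_i},L))|^{p_i}$$
reduces to the pointwise inequality $g'(s)\geq(1+a/p_i)^{-(p_i-1)}|\psi_i'(s)|^{p_i}$ for a.e.\ $s\geq 0$. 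This is an elementary comparison, carried out by distinguishing the position of $s^a$ (and of $s^{a/p_i}$) relative to $L$, and using only the two consequences of $p_i\geq 1$: the inequality $1+a\geq 1+a/p_i$ on the range where $g'(s)=(1+a)s^a$ and $|\psi_i'(s)|^{p_i}\leq (1+a/p_i)^{p_i}s^a$, and $(1+a/p_i)^{-(p_i-1)}\leq 1$ on the range where $g'(s)=L$. Inserting this into \eqref{eqcoridentity} while leaving the first term and the right-hand side unchanged yields exactly the asserted inequality.

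I expect the only genuine difficulty to be this last pointwise step, that is, the bookkeeping showing that the truncation level inside $\partial_i(u\min(u^{a/p_i},L))$ is compatible with the one inside $g(u)$; this is precisely what produces the anisotropic constants $(1+a/p_i)^{-(p_i-1)}$. All the functional-analytic content — membership in $BV^{\vec p}(\R^N)$, the relation ${\sigma^1}\cdot\nabla_1=|\nabla_1\cdot|$ for the truncated function, and the passage $\varphi\to 1_{\R^N}$ turning the weak formulation into an integral identity — is inherited directly from Lemma~\ref{g(u)}, so no new compactness or limiting argument is needed.
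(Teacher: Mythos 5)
Your overall route is exactly the paper's: apply Lemma \ref{g(u)} with $g(t)=t\min(t^{a},L)$ to get $u\min(u^{a},L)\in BV^{\vec p}(\R^N)$, the relation ${\sigma^1}\cdot\nabla_1(g(u))=|\nabla_1(g(u))|$ and the identity (\ref{intgu}), and then reduce the corollary to the pointwise comparison of $g'(u)\,|\partial_i u|^{p_i}$ with $\left({1\over 1+a/p_i}\right)^{p_i-1}|\partial_i(u\min(u^{a/p_i},L))|^{p_i}$. The functional-analytic content is indeed all inherited from Lemma \ref{g(u)}, as in the paper.

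The gap is in the pointwise step, precisely in your handling of the range where $g'(s)=L$. With $\psi_i(t)=t\min(t^{a/p_i},L)$ the two truncations do not switch at the same height: $g'$ becomes constant ($=L$) at $s=L^{1/a}$, whereas $\psi_i'$ becomes constant ($=L$) only at $s=L^{p_i/a}$. Hence, for $p_i>1$ and $L$ large, on $L^{1/a}<s<L^{p_i/a}$ one has $g'(s)=L$ while $(1+a/p_i)^{1-p_i}|\psi_i'(s)|^{p_i}=(1+a/p_i)\,s^{a}>L$, and for $s>L^{p_i/a}$ the required inequality $L\ge (1+a/p_i)^{1-p_i}L^{p_i}$ amounts to $L\le 1+a/p_i$; so the claimed pointwise bound is false whenever $L>1+a/p_i$, and your assertion that only $(1+a/p_i)^{-(p_i-1)}\le 1$ is needed there does not hold. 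The comparison works, with exactly your two elementary observations, once the inner truncation is taken at level $L^{1/p_i}$, i.e. $\psi_i(t)=t\min(t^{a/p_i},L^{1/p_i})$: then both derivatives switch at the same point $s=L^{1/a}$, on $s<L^{1/a}$ one uses $1+a\ge 1+a/p_i$, and on $s>L^{1/a}$ both sides equal $L$ up to the factor $(1+a/p_i)^{1-p_i}\le 1$. This is the version the paper actually uses (only the limit $L\to\infty$ enters Proposition \ref{propLq}, where the truncation level is immaterial), and the printed statement should be read with $L^{1/p_i}$ inside the $\partial_i$-terms; as written, your case analysis does not establish the corollary with the same $L$ in both truncations, so you should either correct the truncation level or restrict to $L\le 1+a/p_i$ and pass to the limit.
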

            
             \begin{proof}

             We use Lemma \ref{g(u)} with 
             $g(u) = u \min ( u^a, L)$ and equation  (\ref{intgu}).  Then it is sufficient to observe that 
             $$\int g^\prime (u) | \partial_i u |^{p_i} \geq\left(  {1 \over 1+ {a\over p_i}}\right)^{p_i-1}\int | \partial_i ( u \min ( u^{ a\over p_i}, L))|^{p_i}. $$

 \end{proof}

             We now prove the following 
              \begin{prop} \label{propLq}
                Let $u$ be  as in Lemma \ref{lemuueps}, then  $u \in L^\infty$.                \end{prop}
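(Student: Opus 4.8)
### Plan for the proof of Proposition \ref{propLq}

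The plan is to run a Moser-type iteration based on the inequality from Corollary \ref{coruua}. First I would apply that corollary with the truncation parameter $L\to\infty$ (justified by monotone convergence, once we know the right-hand side is finite for the relevant exponent) to get, for a suitable increasing sequence of powers, a control of $|\partial_i(u^{1+a/p_i})|_{p_i}^{p_i}$ and of $|\nabla_1(u^{1+a})|_1$ by $\int u^{p^\star+a}$. The point is that the Troisi embedding \eqref{eqemb} (valid on $BV^{\vec p}(\R^N)$ by Remark \ref{embBV}) applied to the function $w=u\min(u^a,L)$, whose derivatives are exactly the quantities estimated in Corollary \ref{coruua}, turns this into
\begin{equation*}
|w|_{p^\star}\ \leq\ \frac{1}{T_0 N}\Bigl(\sqrt{N_1}\,|\nabla_1 w|_1+\sum_{i=N_1+1}^N|\partial_i w|_{p_i}\Bigr)\ \leq\ C\,\Bigl(\int u^{p^\star}\min(u^a,L)\Bigr)^{1/q}
\end{equation*}
for an appropriate $q$ (the derivatives of $w$ differ from those of $u\min(u^{a/p_i},L)$ by bounded factors, so a routine manipulation relates them). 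Since $w\sim u^{1+a}$ away from the truncation, this reads, after letting $L\to\infty$, as an estimate of $|u|_{(1+a)p^\star}$ by a power of $|u|_{p^\star+a}$.

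Next I would set up the iteration. Starting from $a_0$ chosen so that $p^\star+a_0=(1+a_0)p^\star$ has a solution — actually one takes $a_0$ so that the first right-hand exponent $p^\star+a_0$ is something we already control, namely $p^\star$ itself is the natural seed since $u\in L^{p^\star}$; one needs $p^\star+a\le (1+a_0')p^\star$ from a previous step. Concretely, define $a_{k+1}$ by $p^\star+a_{k+1}=(1+a_k)p^\star$, i.e. $a_{k+1}=p^\star a_k+(p^\star-p^\star)$; more precisely the recursion is $(1+a_{k+1})=\frac{p^\star+a_{k+1}}{p^\star}$ combined with matching exponents between consecutive steps, which gives $a_k\to\infty$ geometrically like $(p^\star/1)^k$ times a constant — the key gain being the factor $p^\star>1$ multiplying at each stage. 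Iterating the inequality $|u|_{(1+a_{k})p^\star}\le C_k |u|_{(1+a_{k-1})p^\star}^{\theta_k}$ and checking that $\prod C_k^{\cdots}$ and $\sum(\log$-terms$)$ converge (this is where one uses $p^+<p^\star$, so that the exponents $\theta_k\to$ a limit $<1$ and the constants $C_k$ grow only geometrically while the exponents decay geometrically), one concludes $|u|_{L^\infty}=\lim_k |u|_{(1+a_k)p^\star}\le C(|u|_{p^\star})<\infty$.

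The main obstacle I anticipate is bookkeeping the constants and exponents so that the iteration actually closes: one must verify that the product of the constants $C_k$ raised to the accumulated exponents converges, and that the terms $\int u^{p^\star+a_k}$ at each stage are finite (an a priori qualitative step — one first works with the truncation $L$ fixed, derives a bound uniform in $L$, then passes $L\to\infty$). A subtle point specific to this anisotropic, partially-$1$-Laplacian setting is that the quantity $\int|\nabla_1(u\min(u^a,L))|_1$ enters only to the first power (not raised to a power $p_i$), so in the Troisi product $\prod_i|\partial_i w|_{p_i}^{1/N}$ the first $N_1$ factors are $L^1$ norms; one must check these do not spoil the homogeneity count — but since $p^\star=N/(N_1+\sum_{i>N_1}1/p_i-1)$ already accounts for the $p_i=1$ slots, the exponent arithmetic works out exactly as in the all-$p_i>1$ case of \cite{HR2}. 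I would also note that $\sigma^1\cdot\nabla_1(u\min(u^a,L))=|\nabla_1(u\min(u^a,L))|$ from Corollary \ref{coruua} is what legitimizes treating $\int|\nabla_1 w|_1$ as the honest total variation in the embedding, rather than something smaller. Modulo these verifications, the argument is the standard Moser scheme and gives the explicit dependence $|u|_\infty\le C(|u|_{p^\star})$ claimed.
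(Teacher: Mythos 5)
Your plan runs into a genuine obstruction exactly where the problem is critical, and this is the point your proposal does not resolve. The inequality coming from Corollary \ref{coruua} plus the Troisi embedding is of the form $|u\min(u^a,L)|_{p^\star}^{\,1+a}\leq C(1+a)^{c}\int u^{p^\star}\min(u^{a},L)$ (schematically), and the right-hand exponent is $p^\star+a$. Since $p^\star$ is the \emph{critical} exponent, interpolating $\|u\|_{p^\star+a}$ between $L^{p^\star}$ and $L^{(1+a)p^\star}$ gives $\|u\|_{p^\star+a}^{p^\star+a}\leq \|u\|_{p^\star}^{\theta(p^\star+a)}\|u\|_{(1+a)p^\star}^{(1-\theta)(p^\star+a)}$ with $(1-\theta)(p^\star+a)=1+a$ \emph{exactly}: the dangerous factor carries the same power $1+a$ as the left-hand side, so there is no margin to absorb it, and the step cannot be closed from the sole knowledge $u\in L^{p^\star}$. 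Your seed is in fact degenerate: the equation $p^\star+a_0=(1+a_0)p^\star$ forces $a_0=0$, and the recursion $a_{k+1}=p^\star a_k$ then never leaves $0$; the phrase ``one first works with the truncation $L$ fixed, derives a bound uniform in $L$'' begs the question, because uniformity in $L$ is precisely what requires $\int u^{p^\star+a}<\infty$ or some substitute smallness. The paper's substitute is to split $\int u^{p^\star}\min(u^{ap_j},L^{p_j})$ at a level $k$ and use the smallness of the tail $\epsilon_k=\int_{u\geq k}u^{p^\star}$ to absorb the supercritical part into the left-hand side, at the price of a constant $k_a^{a}$ with $k_a$ chosen so that $c(1+a)\sum_j\epsilon_{k}^{1/p_j-1/p^\star}<1/2$.

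This also kills the second half of your scheme: since $k_a$ depends on the decay of the tail of $u^{p^\star}$ (not just on $|u|_{p^\star}$) and blows up as $a\to\infty$, the resulting bound $|u|_{(1+a)p^\star}\leq C^{1/(1+a)}(1+a)^{1/(1+a)}k_a^{a/(a+1)}$ does not stay bounded along your sequence $a_k\to\infty$, so the Moser iteration yields only $u\in L^q$ for every finite $q$, never $L^\infty$ with a constant $C(|u|_{p^\star})$. The paper therefore proceeds in two stages: first the absorption argument just described to get $u\in L^q$ for all $q<\infty$, and then a De Giorgi--Stampacchia level-set argument rather than Moser: apply Lemma \ref{g(u)} with $g(u)=(u-k)_+$, estimate $\int u^{p^\star-1}(u-k)_+\leq |u|_q^{p^\star-1}|A_k|^{(1-p^\star/q)(1-1/p^\star)}|\varphi_k|_{p^\star}$ for one fixed $q>p^\star$, combine with Lemma \ref{lem1} (this is where $p^+$ really enters, through $|\varphi_k|_{p^\star}^{p^+}\leq c\,(\mathrm{energy})$ and the choice of $\epsilon>0$), and conclude via the differential inequality $y(k)\leq c(-y'(k))^{1+\epsilon}$ for $y(k)=\int_k^\infty|A_\tau|\,d\tau$, optimizing in $k$. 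If you want to keep a Moser-type conclusion you would at least have to restart the iteration from $u\in L^q$, $q>p^\star$, with constants controlled uniformly in the exponent, which your write-up does not provide; as it stands, the missing absorption device and the non-uniform constants are real gaps, not bookkeeping.
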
 
                \begin{proof}
                            
              This proof follows the lines in \cite{FGK} and \cite {HR2}.  Once more, we reproduce it here for the sake of completeness. 
              We begin to prove that $u \in L^q$ for all $q < \infty$.     In the sequel, $c$ denotes some positive constant which  does not depend
                   on $k$ nor on $a$, which can vary from one line to another. 
                             Let  $k$ to choose later,  and write  for all  $p_j$, ( recall that $p_j = 1$ for $j \leq N_1$) : 
                            
                  \begin{eqnarray*}
                  \int u ^{ p^\star}\min ( u ^{ap_j} , L^{p_j} )&=& \int_{ u\leq k} u^{ p^\star} (\min ( u ^a , L))^{p_j} + \int_{ u \geq k}u^{ p^\star} (\min ( u ^a , L))^{p_j} \\
                  &\leq & k^{ ap_j} \int |u|^{ p^\star} + ( \int _{ u \geq k} u^{p^\star})^{1-{ p_j \over p^\star}} \left(\int ( u \min ( u^a, L))^{p^\star}\right)^{ p_j \over p^\star}.
                  \end{eqnarray*}
                                     Using  the embedding from $BV^{ \vec p}$ in $L^{ p^\star}$ one has 
                                  \begin{equation}\label{ua}
                                  \left( \int ( u \min ( u^a, L))^{p^\star}\right)^{ 1\over p^\star}   \leq c\left( \int | \nabla_1 ( u \min ( u^a, L))|+  \sum_{j=N_1+1} ^N (\int | \partial_j ( u \min ( u^a, L))|^{p_j} )^{1 \over p_j}\right). 
                                   \end{equation} 
                                                                 Using Corollary \ref{coruua},  for $ u \min ( u^{ap_j}, L) $ one gets for all $j$
                                     $$  (1+ a)^{-p_j+1} \int | \partial_j ( u \min ( u^a, L))|^{p_j}  \leq \int u^{ p^\star} \min ( u^{ ap_j}, L^{ p_j})$$
                                      and then   defining                                     $ I_j =  (\int | \partial_j ( u \min ( u^a, L))|^{p_j} )^{1 \over p_j}$ and $\epsilon_k =  \int _{ u \geq k} u^{p^\star}$, 
                                   $$ I_j \leq c(1+a) \left(k^a (\int u^{ p^\star} )^{1 \over p_j} +  \epsilon_k^{{1\over p_j}-{1 \over p^\star}}\left[ \int | \ \nabla_1( u \min ( u^a, L)) |  + \sum_{i=N_1+1}^N I_i\right]\right)$$
                                    and 
                                    $$\int | \ \nabla_1( u \min ( u^a, L)) |  \leq  c(1+a) \left(k^a \int u^{ p^\star} + \epsilon_k^{1-{1 \over p^\star}}\left[ \int | \ \nabla_1( u \min ( u^a, L)) |  + \sum_{i=N_1+1}^N I_i\right]\right). $$
Summing over $j$ one gets 
                                   \begin{eqnarray*}
                                    \int | \ \nabla_1( u \min ( u^a, L)) |&+&\sum _{ j =N_1+1}^N I_j \\
                                    &\leq& c(1+a)\left( k^a \sum_{j=1}^N |u|_{ p^\star } ^{ p^\star  \over p_j} \right. \\
                                    &+& \left. \sum_{j=1}^N \epsilon_k^{{1\over p_j}-{1 \over p^\star}}( \int | \ \nabla_1( u \min ( u^a, L)) |
                                    +\sum_{i=N_1+1}^N I_i)\right). 
                                    \end{eqnarray*}
                                                                       Choosing $k_a$  so that $c(a+1) \sum \epsilon_k^{{1\over p_j}-{1 \over p^\star}}< {1 \over 2}$, ( recall that $p_j < p^\star$ for all $j$ and $\epsilon_k\rightarrow 0$ when $k\rightarrow +\infty$),  we have obtained 
                                   $$ {1\over 2}\left[\int | \ \nabla_1( u \min ( u^a, L)) |+ \sum_{j = N_1+1}^N  I_j \right]\leq  c(1+a) k_a^a \sum_{j=1}^N |u|_{ p^\star } ^{ p^\star  \over p_j} , $$ hence, coming back to (\ref{ua})
                                    $$|u \min ( u^a, L)|_{ p^\star} \leq   c  (1+a) k_a^a \sum _{j=1}^N |u|_{ p^\star } ^{ p^\star  \over p_j} .$$
                                     Letting $L$ go to $\infty$ one gets  
                                     $ |u^{a+1}|_{ p^\star} \leq C^\prime( |u|_{p^\star} ) (1+a) k_a ^a ,  $
                                      taking the power ${1 \over a+1}$, one has obtained that for $q = p^\star ( a+1)$,  
                                        $$|u|_q \leq C^\prime( |u|_{ p^\star})^{1\over 1+ a} (1+a)^{1 \over a+1} k_a ^{a\over a+1}, $$
                                         and then $u$ belongs to $L^q$ for all $q < \infty$. 
                                       
                                       \bigskip
                                         
                                          To  prove that $u\in L^\infty$, we still follow the lines in \cite{HR2}. 
                                         
                                          Choose $q > p^\star$ so that 
                                          $ \epsilon := {-1 \over p^\star} + ( 1-{ p^\star\over q}) ( 1-{1 \over p^\star}){1 \over p^+-1} >0$. 
                                                                              Let 
                                           $\varphi_k =  ( u-k)_+$,   and 
                                            $A_k = \{ x, u(x) > k\}$. 
                                            Let us begin to note that $A_k$ is of finite measure   for all  $k>0$, since 
                                             $$| \{ x, u(x) > k\}| k^{p^\star} \leq \int_{ u> k} |u|^{p^\star}  \leq |u|^{p^\star}_{p^\star}.$$
                                            We then  deduce that for $k>0$, 
                                              $ (u-k)_+\in L^1$, since 
                                              \begin{equation}\label{uk}
                                              \int (u-k)^+ \leq \int_{ u \geq k} u\leq \int_{ u \geq k} { u^{ p^\star} \over k^{ p^\star-1}}.
                                              \end{equation}
                                            We now apply Lemma \ref{g(u)} with $g(u) = (u-k)^+$.                                                
                                    Using (\ref{intgu})  one gets 
                                    \begin{eqnarray*}
                                     | \nabla_1 \varphi_k|_1+ \sum_{ i = N_1+1}^N  | \partial_i \varphi_k|_{p_i}^{ p_i} &=& \int  u^{ p^\star-1} ( u-k)^+ 
                                   \\
                                   & \leq& |u|_q^{ p^\star-1}  |A_k |^{(1-{ p^\star \over q})(1-{1 \over p^\star})} |\varphi_k |_{ p^\star}\\
                                   &\leq& c |A_k |^{(1-{ p^\star \over q})(1-{1 \over p^\star})} |\varphi_k |_{ p^\star}. 
                                   \end{eqnarray*}
                                    We then have since $| \varphi_k|_{ p^\star} \leq |u|_{ p^\star}= 1$, by Lemma  \ref{lem1}
                                    \begin{eqnarray*}
                                     |\varphi_k |_{ p^\star} ^{ p^+} &\leq&c \left(   | \nabla_1 \varphi_k|_1 + \sum_{ i =N_1+1}^N{1\over p_i}  | \partial_i \varphi_k |_{ p_i}^{ p_i} \right)\\
                                     &\leq& c|A_k |^{(1-{ p^\star \over q})(1-{1 \over p^\star})} |\varphi_k |_{ p^\star}.
                                     \end{eqnarray*}
                                  hence 
                                  
                                            $$ | \varphi_k |_{p^\star} \leq c |A_k|^{\epsilon + { 1 \over p^\star}}, $$and using  H\" older's inequality,   one derives 
                                            $\int_{ \R^N} (u-k)_+ \leq |A_k |^{1-{1 \over p^\star} } | \varphi_k |_{p^\star} \leq c |A_k |^{1+ \epsilon}$. 
                                            
                                            Let $y(k) = \int_k^\infty |A_\tau| d\tau$, then 
                                              $y(k) =  \int_{ \R^N}( u-k)_+\leq c ( -y^\prime (k))^{1+ \epsilon}$, 
                                                                                             and integrating one obtains 
                                               $$-y^{ \epsilon \over 1+ \epsilon} ( u(s)) + y^{ \epsilon \over 1+ \epsilon} (k) \geq { \epsilon \over 1+ \epsilon} c^{-1\over 1+ \epsilon} ( u(s)-k)$$
                                               hence for any $s$, recalling ( \ref{uk}), for some constants $b$ and $\gamma>0$:
                                                $$ u(s)-k \leq {1+ \epsilon \over \epsilon} c^{1 \over 1+ \epsilon}{ |u|_{p^\star}^{ p^\star \epsilon \over 1+ \epsilon} \over k^{ (p^\star-1) \epsilon \over 1+ \epsilon}}\leq {b\over k^\gamma}.$$
                                                 Optimizing with respect to $k$, ie taking the infimum one gets that 
                                                 $$ u(s) \leq c ( |u|_{p^\star})$$
                                                   \end{proof}
                                                   \begin{rema}

Let $q_1,\cdots, q_m$ be such that $\{p_{N_1+1},\cdots,p_N\}=\{q_1,\cdots, q_m\}$, and $q_i\neq q_j$ when $i\neq j$.

                                                   Note that one could consider in place of ${\cal K}  = \inf_{ u \in { \cal D}^{1, \vec p}, |u|_{ p^\star}=1}  | \nabla _1 u |_1 + \sum_{i = N_1+1}^N { 1 \over p_i } |\partial_i u |^{ p_i}_{p_i}$ the infinimum 
                                                   $$\tilde {\cal K}  = \inf_{ u \in { \cal D}^{1, \vec p}(\R^N), |u|_{ p^\star}=1}  | \nabla _1 u |_1 + \sum_{j=1}^m({1\over q_j} \int (\sum_{i, p_i = q_j}|\partial_i u |^2)^{  q_j\over 2})$$ and prove the existence of an extremal function with obvious changes.
                                                    \end{rema}

$$$$

Fran\c coise Demengel : francoise.demengel@u-cergy.fr

Thomas Dumas : thomas.dumas@u-cergy.fr

                \end{document}